\definecolor{dark-red}{rgb}{0.5,0.15,0.15}
\definecolor{dark-blue}{rgb}{0.15,0.15,0.6}
\definecolor{dark-green}{rgb}{0.15,0.6,0.15}
\numberwithin{equation}{section}
\newtheorem{Thm}[equation]{Theorem}
\newtheorem*{Thm*}{Theorem}
\newtheorem{Prop}[equation]{Proposition}
\newtheorem{Lem}[equation]{Lemma}
\newtheorem{Cor}[equation]{Corollary}
\newtheorem{thmx}{Theorem}
\theoremstyle{remark}
\newtheorem{Def}[equation]{Definition}
\newtheorem{Rec}[equation]{Recollection}
\newtheorem{Not}[equation]{Notation}
\newtheorem{Exa}[equation]{Example}
\newtheorem{Conv}[equation]{Convention}
\newtheorem{Warn}[equation]{Warning}
\newtheorem{Rem}[equation]{Remark}
\tikzstyle{category} = [rectangle, rounded corners, minimum width=3cm, minimum height=1cm, text centered, text width=2.5cm, draw=black]
\tikzstyle{arrow} = [thick,->,>=stealth]
\tikzstyle{arrow2} = [thick,->,>=stealth,dotted]
\newcommand{\nc}{\newcommand}
\nc{\dmo}{\DeclareMathOperator}
\nc{\overbar}[1]{\mkern 1.5mu\overline{\mkern-1.5mu#1\mkern-1.5mu}\mkern 1.5mu}
\nc{\kappaaux}{g}
\nc{\kappam}{{\kappaaux({\frak m})}}
\nc{\kappaP}{{\kappaaux(\cat P)}}
\nc{\kappaQ}{{\kappaaux(\cat Q)}}
\nc{\kappaSP}{{\kappaaux_{\cat S}(\cat P)}}
\nc{\kappaTP}{{\kappaaux_{\cat T}(\cat P)}}
\nc{\kappaSQ}{{\kappaaux_{\cat S}(\cat Q)}}
\nc{\kappaTQ}{{\kappaaux_{\cat T}(\cat Q)}}
\nc{\kappaphiB}{{\kappaaux(\varphi(\cat B))}}
\nc{\kappaphiQ}{{\kappaaux(\varphi(\cat Q))}}
\dmo{\Sub}{Sub}
\nc{\SpEn}{\cat S_{E(n)}}
\nc{\SpEnf}{\cat S_n}
\nc{\Loco}[1]{\Loc_{\otimes}\hspace{-0.3ex}\langle #1 \rangle}
\nc{\bbullet}{{\scriptscriptstyle\hspace{-1pt}\bullet}}
\nc{\bullett}{{\scriptscriptstyle\bullet}\hspace{-1pt}}
\nc{\LF}{L\hspace{-0.2ex}F}
\nc{\SpG}{\Sp_G}
\nc{\SpGn}{\Sp_{G,n}}
\nc{\EG}{\bbE_G}
\nc{\EH}{\bbE_H}
\nc{\DEG}{\Der(\EG)}
\nc{\DEH}{\Der(\EH)}
\nc{\DE}{\Der(\bbE)}
\nc{\Prst}{{\cat P}\mathrm{r^{st}}}
\nc{\Mack}[2]{\mathrm{Mack}_{#1}(#2)}
\nc{\SC}{S\cat C}
\dmo{\fin}{{fin}}
\dmo{\DM}{DM}
\dmo{\fp}{fp}
\nc{\DMQ}{\DM_Q}
\dmo{\DerKal}{DMack}
\dmo{\Der}{D}
\dmo{\DMot}{DMot}
\dmo{\rmH}{H}
\dmo{\piu}{\underline{\pi}}
\dmo{\Sphere}{\mathbb{S}}
\nc{\HA}{{\rmH \hspace{-0.2em}\bbA}}
\nc{\HZ}{{\rmH \hspace{-0.2em}\bbZ}}
\nc{\HZbar}{{\rmH \hspace{-0.2em}\underline{\bbZ}}}
\nc{\Fp}{{\bbF_{\hspace{-0.1em}p}}}
\nc{\HFp}{{\rmH \hspace{-0.15em}\bbF_{\hspace{-0.1em}p}}}
\nc{\DHZG}{\Der(\HZ_G)}
\nc{\DHZH}{\Der(\HZ_H)}
\nc{\DHZK}{\Der(\HZ_K)}
\nc{\DHZGN}{\Der(\HZ_{G/N})}
\nc{\DHZGG}{\Der(\HZ_{G/G})}
\nc{\DHZCp}{\Der(\HZ_{C_p})}
\nc{\DHZGprime}{\Der(\HZ_{G'})}
\nc{\DHZ}{\Der(\HZ)}
\nc{\frakp}{\mathfrak{p}}
\nc{\frakq}{\mathfrak{q}}
\nc{\Z}{\mathbb{Z}}
\nc{\SSG}{\text{sSet}_*^G}
\nc{\sSet}{\text{sSet}}
\dmo{\csupp}{supp_{coh}}
\dmo{\Con}{Conj}
\dmo{\Id}{Id}
\dmo{\Loc}{Loc}
\dmo{\rmK}{\textrm{\rm K}}
\dmo{\Spc}{Spc}
\dmo{\thick}{thick}
\dmo{\Thick}{Thick}
\nc{\Thickt}[1]{\Thick_\otimes\langle #1 \rangle}
\dmo{\cone}{cone}
\dmo{\End}{End}
\dmo{\Mor}{Mor}
\dmo{\Hom}{Hom}
\dmo{\id}{id}
\dmo{\incl}{incl}
\dmo{\Img}{Im}
\dmo{\im}{im}
\dmo{\Ker}{Ker}
\dmo{\ind}{ind}
\dmo{\CoInd}{coind}
\dmo{\res}{res}
\dmo{\infl}{infl}
\dmo{\triv}{triv}
\dmo{\Tel}{Tel} 
\dmo{\grMod}{grMod}%
\dmo{\Mod}{Mod}%
\dmo{\opname}{op}
\dmo{\SH}{SH}
\dmo{\smallb}{b}
\dmo{\Spec}{Spec}
\dmo{\supp}{supp}
\dmo{\Supp}{Supp}
\dmo{\cosupp}{cosupp}
\dmo{\Cosupp}{Cosupp}
\nc{\SHc}{{\SH^c}}
\nc{\SHp}{{\SH_{(p)}}}
\nc{\SHcp}{{\SH^c_{(p)}}}
\nc{\SHG}{\SH(G)}
\nc{\SHGp}{\SH(G)_{(p)}}
\nc{\SHGc}{\SHG^c}
\nc{\SHGcp}{\SHG^c_{(p)}}
\nc{\quadtext}[1]{\quad\textrm{#1}\quad}
\nc{\qquadtext}[1]{\qquad\textrm{#1}\qquad}
\nc{\adj}{\dashv}
\nc{\adjto}{\rightleftarrows}
\nc{\bbL}{\mathbb{L}}
\nc{\bbA}{\mathbb{A}}
\nc{\bbE}{\mathbb{E}}
\nc{\bbN}{\mathbb{N}}
\nc{\bbQ}{\mathbb{Q}}
\nc{\bbZ}{\mathbb{Z}}
\nc{\bbF}{\mathbb{F}}
\nc{\cat}[1]{\mathscr{#1}}
\nc{\ie}{{\sl i.e.}, }
\nc{\into}{\mathop{\rightarrowtail}}
\nc{\inv}{^{-1}}
\nc{\isoto}{\mathop{\overset{\sim}\to}}
\nc{\isotoo}{\mathop{\overset{\sim}\too}}
\nc{\onto}{\mathop{\twoheadrightarrow}}
\nc{\too}{\mathop{\longrightarrow}\limits}
\nc{\mapstoo}{\longmapsto}
\nc{\adh}[1]{\overline{#1}}
\nc{\adhpt}[1]{\adh{\{#1\}}}
\nc{\aka}{{a.\,k.\,a.}\ }
\nc{\calF}{\mathcal{F}}
\nc{\eg}{{\sl e.\,g.}}
\nc{\Homcat}[1]{\Hom_{\cat #1}}
\nc{\hook}{\hookrightarrow}
\nc{\ideal}[1]{\langle #1\rangle}
\nc{\ihom}{{\underline{\hom}}}
\nc{\iHom}{\mathcal{H}\mathrm{om}}
\nc{\Mid}{\,\big|\,}
\nc{\MMod}{\,\text{-}\Mod}%
\nc{\op}{^{\opname}}
\nc{\oto}[1]{\overset{#1}\to}
\nc{\otoo}[1]{\overset{#1}{\,\too\,}}
\nc{\sminus}{\!\smallsetminus\!}
\nc{\poplus}[1]{^{\oplus #1}}%
\nc{\potimes}[1]{^{\otimes #1}}
\nc{\sbull}{{\scriptscriptstyle\bullet}}
\nc{\SET}[2]{\big\{\,#1\Mid#2\,\big\}}
\nc{\SpcK}{\Spc(\cat K)}
\nc{\then}{\Rightarrow}
\nc{\unit}{\mathbb{1}}
\nc{\xra}{\xrightarrow}
\nc{\phigeom}[1]{\widetilde{\Phi}^{#1}}
\nc{\phigeomb}[1]{\Phi^{#1}}
\dmo{\Oname}{O}
\dmo{\proper}{proper}
\dmo{\lenormal}{\unlhd}
\dmo{\lnormal}{\lhd}
\nc{\normal}{\trianglelefteq}
\nc{\Op}{\Oname^p}
\nc{\Oq}{\Oname^q}
\dmo{\Sp}{Sp}
\dmo{\Ho}{Ho}
\dmo{\Fin}{Fin}
\dmo{\add}{add}
\dmo{\Fun}{Fun}
\dmo{\Ext}{Ext}
\dmo{\CAlg}{CAlg}
\dmo{\CMon}{CMon}
\dmo{\CC}{\cat C}
\dmo{\DD}{\cat D}
\dmo{\OO}{\mathcal{O}}
\dmo{\Map}{Map}
\dmo{\Span}{Span}
\dmo{\N}{N}
\dmo{\Cat}{Cat}
\dmo{\colim}{colim}
\dmo{\hocolim}{hocolim}
\dmo{\Ch}{Ch}
\dmo{\A}{\mathbb{A}^{eff}}
\nc{\AGeff}{\mathbb{A}_G^{\mathrm{eff}}}
\nc{\BGeff}{\mathcal{B}_G^{\mathrm{eff}}}
\nc{\BG}{{\mathcal{B}_G}}
\nc{\NBGeff}{{\N}{\BGeff}}
\dmo{\Ab}{Ab}
\dmo{\Set}{Set}
\dmo{\ev}{ev}
\dmo{\Spcl}{Spcl}
\nc{\Funadd}{\Fun_{\add}}
\dmo{\proj}{proj}
\dmo{\cof}{cof}
\dmo{\Coideal}{Coideal}
\dmo{\gen}{gen}
\dmo{\StMod}{StMod}
\dmo{\projmod}{Lat}
\dmo{\rep}{rep}
\dmo{\Rep}{Rep}
\dmo{\Perf}{Perf}
\dmo{\stmod}{stmod}
\dmo{\Ind}{Ind}
\nc{\borel}[1]{\underline{#1}}
\dmo{\coind}{coind}
\dmo{\rank}{rank}
\nc{\tH}{\hat{H}}
\dmo{\Nm}{Nm}
\dmo{\Proj}{Proj}
\dmo{\Inj}{Inj}
\dmo{\dual}{dual}
\dmo{\fg}{fg}
\nc{\cdvr}[2]{{#1}_{#2}^{\wedge}}
\nc{\cA}{\mathcal{A}}
\dmo{\orbit}{Or}
\newcommand\noloc{%
  \nobreak
  \mspace{6mu plus 1mu}
  {:}
  \nonscript\mkern-\thinmuskip
  \mathpunct{}
  \mspace{2mu}
}
\nc{\mT}{\kern-0.5em\mod\kern-0.1em\text{-}\cat{T}^c}
\nc{\MT}{\Mod\kern-0.1em\text{-}\cat{T}}
\newcounter{enum-resume-hack}
\begin{document}


\title[Integral stratification]{Stratifying integral representations \\ of finite groups}

\author{Tobias Barthel}

\date{\today}

\makeatletter
\patchcmd{\@setaddresses}{\indent}{\noindent}{}{}
\patchcmd{\@setaddresses}{\indent}{\noindent}{}{}
\patchcmd{\@setaddresses}{\indent}{\noindent}{}{}
\patchcmd{\@setaddresses}{\indent}{\noindent}{}{}
\makeatother

\address{Tobias Barthel, Max Planck Institute for Mathematics, Vivatsgasse 7, 53111 Bonn, Germany}
\email{tbarthel@mpim-bonn.mpg.de}
\urladdr{https://sites.google.com/view/tobiasbarthel/home}

\begin{abstract}
We classify the localizing tensor ideals of the integral stable module category for any finite group $G$. This results in a generic classification of $\bbZ[G]$-lattices of finite and infinite rank and globalizes the modular case established in celebrated work of Benson, Iyengar, and Krause. Further consequences include a verification of the generalized telescope conjecture in this context, a tensor product formula for integral cohomological support, as well as a generalization of Quillen's stratification theorem for group cohomology. Our proof makes use of novel descent techniques for stratification in tensor-triangular geometry that are of independent interest. 
\vspace{-1em}
\end{abstract}

\subjclass[2020]{18F99, 18G65, 18G80, 20C12, 55P91, 55U35}

\maketitle

\tableofcontents
\vspace{-1em}

\section{Introduction}

The subject of this paper is the integral representation theory of finite groups. We consider representations of a finite group $G$ on free $\bbZ$-modules, not necessarily of finite rank. The classification of such representations is, in general, a wild problem, so that a complete algebraic or geometric parametrization of the isomorphism classes is impossible. The goal of this paper is to instead classify these representations generically, that is up to a coarser equivalence relation than isomorphism. 

Let $\cat O$ be the ring of integers in a number field. We write $\cat O[G]$ for the group algebra and denote by $\projmod(G,\cat O)$ the category of $\cat O[G]$-lattices, i.e., $\cat O[G]$-modules whose underlying $\cat O$-module is projective, equipped with the $\cat O$-linear symmetric monoidal structure. The quotient of $\projmod(G,\cat O)$ by the projective $\cat O[G]$-modules forms the stable module category $\StMod(G,\cat O)$, an integral analogue of the stable module category in modular representation theory. It inherits the structure of a tensor-triangulated category. A full subcategory  is said to be a localizing tensor ideal if it is closed under triangles, coproducts, and tensor products with arbitrary objects in $\StMod(G,\cat O)$. The main classification result proven in this paper is:

\begin{Thm*}\label{thm:main}
There is a canonical bijective correspondence between localizing tensor ideals in $\StMod(G,\cat O)$ and subsets of $\Spec^h(H^*(G;\cat O))\setminus\Spec(\cat O)$.
\end{Thm*}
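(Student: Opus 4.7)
The plan is to prove the theorem by combining the modular stratification of Benson--Iyengar--Krause with a two-stage descent in tensor-triangular geometry: first a local-to-global reduction over $\Spec(\cat O)$, and then a descent from each completed integral stable module category to its modular residue. Throughout I would work in the standard cohomological support framework, assigning to $\StMod(G,\cat O)$ a support function valued in $\Spec^h(H^*(G;\cat O))$ via the natural action of group cohomology on morphism objects. The general formalism then reduces the theorem to two assertions: (i) the local category $\Gamma_\frakp\StMod(G,\cat O)$ vanishes for primes $\frakp$ lying in the image of $\Spec(\cat O)\hookrightarrow\Spec^h(H^*(G;\cat O))$, and (ii) $\Gamma_\frakp\StMod(G,\cat O)$ is minimal (contains no proper nonzero localizing tensor ideal) for all remaining homogeneous primes $\frakp$.

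For (i), vanishing at the generic point follows from Maschke's theorem applied to $K[G]$, vanishing at closed points of residue characteristic coprime to $|G|$ follows from semisimplicity of $k(\frakp)[G]$, and vanishing at the ``irrelevant'' primes $\frakp+H^{\geq 1}$ on the diagonal $\Spec(\cat O)\subseteq\Spec^h$ comes from a direct cohomological computation. For (ii), I would first apply a local-to-global decomposition across $\Spec(\cat O)$: completion at each maximal ideal $\frakp$ induces a symmetric monoidal functor $\StMod(G,\cat O)\to \StMod(G,\hat{\cat O}_\frakp)$, and jointly these (together with rationalization, which is trivial) detect support on the non-trivial locus. This reduces minimality to the $\frakp$-complete situation at primes whose residue characteristic divides $|G|$, where reduction modulo $\frakp$ gives a further symmetric monoidal functor $\StMod(G,\hat{\cat O}_\frakp)\to \StMod(G,k(\frakp))$ whose target is stratified by Benson--Iyengar--Krause. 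The new descent principle promised in the abstract would then transfer this stratification upward along reduction to the $\frakp$-complete integral category.

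The principal obstacle is the last step: the reduction functor $\StMod(G,\hat{\cat O}_\frakp)\to \StMod(G,k(\frakp))$ is neither conservative nor smashing, and infinitely generated $\frakp$-divisible lattices have no modular shadow, so classical tt-descent techniques do not apply directly. I expect the technical heart of the paper to be a general stratification-descent theorem in tt-geometry that requires only a weaker form of support detection along a family of functors (e.g.\ reduction modulo $\frakp$ together with auxiliary localizations), combined with a careful matching of homogeneous spectra before and after base change so that the strata on the source are indexed exactly by $\Spec^h(H^*(G;\hat{\cat O}_\frakp))\setminus \Spec(\hat{\cat O}_\frakp)$. Once this descent principle is available, specializing it to the present setting yields (ii), and reassembling over $\Spec(\cat O)$ produces the claimed bijection.
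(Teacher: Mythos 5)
Your overall architecture --- modular base case, arithmetic local-to-global over $\Spec(\cat O)$, and a descent from $\StMod(G,\hat{\cat O}_{\frak p})$ to $\StMod(G,k(\frak p))$ --- matches the paper's in outline, but there is a genuine gap at exactly the step you flag as the technical heart. You propose to run the descent along reduction mod $\frak p$ for an \emph{arbitrary} finite group $G$. The paper can only verify the hypotheses of its descent theorem (a ``nil-descent'': all three adjoints $\alpha^*,\alpha_*,\alpha^!$ conservative, and a bijection on Balmer spectra) for \emph{elementary abelian} $p$-groups $E$, and the verification uses special features of their cohomology: $p\cdot H^{>0}(E;A)=0$, hence $p^{r}\cdot\hat{H}^*(E;A)=0$, which gives $A^{tE}\in\Thick(k^{tE})$ and with it conservativity of $\alpha^*$ and $\alpha^!$; and an $F$-isomorphism $H^*(E;A/p^r)\to H^*(E;k)$, which gives the required bijection of homogeneous spectra away from the single point coming from $K$. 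None of this is available for general $G$. The missing ingredient is the reduction to elementary abelian subgroups: a derived Chouinard theorem (conservativity of the joint restriction to all $E\in\cat E(G)$, deduced from Carlson's filtration theorem) exhibiting restriction as a conservative finite \'etale functor, combined with a finite \'etale descent theorem for stratification. Without that reduction, your step (ii) does not land in a situation where the descent principle can be verified.

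Two further points. First, you assert that the reduction functor $\StMod(G,\hat{\cat O}_{\frak p})\to\StMod(G,k(\frak p))$ ``is neither conservative nor smashing'' and that $\frak p$-divisible lattices have no modular shadow. In fact, conservativity of this functor (in the elementary abelian case) is precisely what the paper proves and exploits; a torsion-free $\frak p$-divisible lattice is a $K$-vector space, hence weakly injective and already zero in the stable module category, so the obstacle you describe is not the real one. Second, your bijection is stated against all of $\Spec^h(H^*(G;\cat O))\setminus\Spec(\cat O)$, but a support-theoretic argument of the kind you sketch only classifies localizing tensor ideals by subsets of the actual support of the category; identifying that support (equivalently, the Balmer spectrum of $\stmod(G,\cat O)$) with $\Spec^h(H^*(G;\cat O))\setminus\Spec(\cat O)$ is a separate, essential input --- in the paper it is Lau's computation of $\Spc(\rep(G,R))$ for regular $R$ --- which your proposal does not supply. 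Your point (i) gives one inclusion but not the surjectivity of the support map onto the stated index set.
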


Both the construction of the bijection as well as the relation to the classification of $\cat O$-linear representations of $G$ will be described in more detail below. This result is an integral version of the main theorem of \cite{BensonIyengarKrause11a} for representations over fields. As we will explain momentarily, the proof relies on methods from homotopy theory and higher algebra as well as recent advances in tensor-triangular geometry. In fact, our approach applies to the modular case as well, thus recovering the results of Benson, Iyengar, and Krause.

\subsection{Overview of results}

The starting point for this work is the construction of a suitable version of the stable module category for a finite group $G$ with coefficients in a commutative ring $R$. We take a homotopy-theoretic perspective: Let $\Fun(BG,\Perf(R))$ be the category\footnote{More precisely, in this paper we work in the context of $\infty$-categories, see also \cref{ssec:conventions}.} of local systems on the classifying space $BG$ with values in the category of perfect complexes over $R$. This category has familiar manifestations in algebra, geometry, and topology; it is equivalent to:
\begin{itemize}
    \item (Homological algebra) The bounded derived category of $R[G]$-modules with underlying perfect non-equivariant complex.
    \item (Algebraic geometry) The category of perfect complexes over the Deligne--Mumford quotient stack $[\Spec(R)/G]$ for regular $R$.
    \item (Equivariant homotopy theory) If $R$ is regular, the category of module spectra over the genuine Tate construction $\underline{R}_G$ in genuine $G$-spectra.
\end{itemize}
The category of perfect $R[G]$-modules is a full subcategory of $\Fun(BG,\Perf(R))$, strict if the order of $G$ is not invertible in $R$. Inspired by Rickard's description of the classical stable module category for field coefficients, we then define the stable module category as the Verdier quotient
\[
\stmod(G,R) = \Fun(BG,\Perf(R)) / \Perf(R[G]).
\]
This construction was inaugurated by Mathew \cite{mathew_torus,treumannmathew_reps} and further studied by A.~Krause \cite{krause_picard}, with important precursors in the stable derived category of H.~Krause \cite{krause_stablederived} or Orlov's singularity categories \cite{Orlov04}. The stable module category $\stmod(G,R)$ naturally has the structure of a tt-category. Moreover, there is an associated big (rigidly-compactly generated) tt-category 
\[
\StMod(G,R) = \Ind\stmod(G,R).
\]

Our approach to the classification of localizing tensor ideals in $\StMod(G,R)$ is based on the tt-geometric notion of stratification for a rigidly-compactly generated tt-category $\cat T$. This theory has recently been developed systematically in joint work with Heard and Sanders \cite{bhs1}, building on ideas of Balmer \cite{Balmer05a}, Balmer--Favi \cite{BalmerFavi11}, Benson, Iyengar, Krause (BIK) \cite{BensonIyengarKrause11a,BensonIyengarKrause11b}, and Stevenson \cite{Stevenson13}. Let $\cat T^c$ be the full subcategory of compact objects in $\cat T$. There is a universal support function $\Supp$ on $\cat T$ taking values in subsets of the Balmer spectrum of $\cat T^c$. It extends naturally to localizing tensor ideals and thus provides a candidate map
\[
\Supp\colon\begin{Bmatrix}
\text{Localizing tensor} \\
\text{ideal of } \cat T
\end{Bmatrix} 
\xymatrix@C=2pc{ \ar[r] &}
\begin{Bmatrix}
\text{Subsets of}  \\
\Spc(\cat T^c)
\end{Bmatrix}.
\]
Stratification formulates conditions on $\cat T$ for this map to be a bijection and thus to solve the classification problem \emph{relative} to the Balmer spectrum $\cat T^c$. 

The Balmer spectrum of the stable module category for regular $R$ can be deduced from a recent result of Lau~\cite{lau_spcdmstacks}. The augmentation $H^*(G;R) \to R$ induces a map on (homogeneous) Zariski spectra $\Spec(R) \to \Spec^h(H^*(G;R))$. Lau's theorem then implies that Balmer's comparison map \cite{Balmer10b} provides a homeomorphism 
\begin{equation}\label{eq:introlau}
\xymatrix{\Spc(\stmod(G,R)) \ar[r]^-{\cong} & \Spec^h(H^*(G;R))\setminus\Spec(R).}
\end{equation}
For $R = k$ a field of characteristic $p$ dividing the order of $G$, this result recovers the main theorem of \cite{BensonCarlsonRickard97}. In general, this map can be described in more explicit terms with the cohomological support in the sense of Benson, Carlson, Rickard \cite{BensonCarlsonRickard95, BensonCarlsonRickard96}. In conjunction with the equivalence \eqref{eq:introlau}, our main theorem (\cref{thm:stratstmod}) may be stated succinctly as follows:

\begin{thmx}\label{thm:a}
Let $G$ be a finite group and suppose $\cat O$ is a Dedekind domain of characteristic $0$, then $\StMod(G,\cat O)$ is stratified.
\end{thmx}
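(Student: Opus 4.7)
The plan is to apply the framework for verifying stratification of a rigidly-compactly generated tt-category $\cat T$ developed in \cite{bhs1}: it suffices to establish the local-to-global principle together with minimality of each local category at every point of $\Spc(\cat T^c)$. Combined with Lau's identification of the Balmer spectrum recorded in \eqref{eq:introlau}, the spectrum of $\stmod(G,\cat O)$ decomposes naturally over $\Spec(\cat O)$, so that the fibre above a point of $\Spec(\cat O)$ is governed by the cohomology of $G$ with coefficients in the corresponding residue field.

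First, I would analyze $\StMod(G,\cat O)$ along the residue fields of $\cat O$. For the generic point, the residue field $K$ has characteristic $0$; Maschke's theorem gives $\Fun(BG,\Perf(K)) = \Perf(K[G])$, so $\stmod(G,K)$ is trivial and contributes nothing to the spectrum. For a closed point $\frakp$ with residue field $k(\frakp)$ of residue characteristic $p$ dividing $|G|$, the modular stable module category $\StMod(G,k(\frakp))$ is stratified by the theorem of Benson, Iyengar, and Krause \cite{BensonIyengarKrause11a}, which I would take as a black box, and its spectrum accounts for the portion of $\Spc(\stmod(G,\cat O)^c)$ lying over $\frakp$.

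The core step is to assemble these fibrewise stratifications into a global stratification of $\StMod(G,\cat O)$ via descent. Since $\cat O$ is a Dedekind domain, one expects a faithful cover of the integral theory by base-changed theories at the residue fields (together with the generic fibre), through which both the local-to-global principle and minimality can be verified prime by prime. Making this precise requires the descent techniques advertised in the abstract: one must formulate a suitable notion of stratification-faithful cover, show that base change along $\cat O \to k(\frakp)$ (after appropriate completion) produces such a cover, and check that minimality at a point of $\stmod(G,\cat O)$ lying over $\frakp$ follows from minimality of the corresponding local category of $\stmod(G,k(\frakp))$.

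The principal obstacle is precisely this descent step. Localizing tensor ideals are not in general preserved or detected by arbitrary tt-functors, so patching together stratification across characteristics demands careful control of the interaction between the various support theories, the $p$-completions $\cat O^{\wedge}_{\frakp}$, and the generic fibre. In particular, one must verify that the data of local supports at the residue fields glues along $\Spec(\cat O)$ to reconstruct localizing ideals of the integral category — this is where the novelty over the modular setting lies. Once such a descent machinery is in place, \textbf{Theorem A} follows by combining it with the Benson--Iyengar--Krause stratification over fields and the triviality of $\stmod(G,K)$ in characteristic zero.
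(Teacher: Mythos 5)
Your proposal correctly identifies the inputs (Lau's computation of the spectrum, the Benson--Iyengar--Krause theorem over fields, the triviality of $\stmod(G,K)$ in characteristic zero) and correctly locates the difficulty in the descent step, but it leaves that step as an acknowledged black box, and the route you sketch for it would not go through as stated. The gap is twofold.

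First, you propose to descend stratification directly from the residue fields $k(\frak p)$ to (completions of) $\cat O$ for an \emph{arbitrary} finite group $G$. The paper's mechanism for crossing from mixed characteristic to the residue field is nil-descent along the reduction $A \to k$, and its two essential hypotheses --- that $A$ lies in the thick subcategory generated by $k$ inside $\StMod(E,A)$ (equivalently $A^{tE}\in\Thick(k^{tE})$), and that reduction induces a bijection on Balmer spectra --- are proved only for \emph{elementary abelian} $p$-groups. They rest on the fact that $p\cdot H^{*>0}(E;A)=0$, hence $p^{r}$ annihilates $A^{tE}$; by Adem's theorem this property characterizes elementary abelian groups among finite $p$-groups, so the corresponding statements fail for general $G$. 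Your plan is therefore missing an essential first reduction: a derived Chouinard theorem (via Carlson's filtration theorem) showing that restriction to elementary abelian subgroups is a conservative finite \'etale functor, followed by \'etale descent for stratification, which reduces the whole problem to elementary abelian $E$ before any characteristic-$p$ arguments are made.

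Second, the gluing over $\Spec(\cat O)$ that you describe as "formulating a suitable notion of stratification-faithful cover" is not left abstract in the paper: the arithmetic fracture square for $\Rep(G,\cat O)$ collapses, after passing to the stable module category (where the rational part vanishes by Maschke), to an honest finite product decomposition $\StMod(G,\cat O)\simeq\prod_{\frak p\mid|G|}\StMod(G,\cdvr{\cat O}{\frak p})$, and stratification for a finite product is elementary (Zariski descent). Without identifying this product decomposition and the elementary abelian reduction, the "descent machinery" your argument appeals to does not exist, so the proof is not complete as proposed.
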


Besides the aforementioned classification of localizing tensor ideals, additional consequences of this theorem are that both an integral version of the tensor product formula and the generalized telescope conjecture hold in $\StMod(G,\cat O)$. Moreover, combined with Neeman's stratification result \cite{Neeman92a} for the derived category $\cat D(\cat O)$, we prove that \cref{thm:a} is equivalent to stratification of the larger category $\Rep(G,\cat O) = \Ind\Fun(BG,\Perf(\cat O))$ of ind-local systems. \Cref{thm:stratrep} asserts:

\begin{thmx}\label{thm:b}
Let $G$ be a finite group and suppose $\cat O$ is a Dedekind domain of characteristic $0$, then $\Rep(G,\cat O)$ is stratified over $\Spec^h(H^*(G;\cat O))$.
\end{thmx}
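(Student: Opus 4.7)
The plan is to deduce Theorem B from Theorem A via a gluing argument across the Verdier localization sequence defining $\stmod(G,\cat O)$, combined with Neeman's stratification of $\cat D(\cat O)$. Passing to Ind-completions, the localization sequence
\[
\Perf(\cat O[G]) \hookrightarrow \Fun(BG,\Perf(\cat O)) \twoheadrightarrow \stmod(G,\cat O)
\]
yields a smashing localization of rigidly-compactly generated tt-categories
\[
\cat D(\cat O[G]) \hookrightarrow \Rep(G,\cat O) \twoheadrightarrow \StMod(G,\cat O).
\]
This in turn should induce an open-closed decomposition of $\Spc(\Rep(G,\cat O)^c) \cong \Spec^h(H^*(G;\cat O))$ into the closed subset $\Spec(\cat O)$, identified with $\Spc(\Perf(\cat O[G]))$ via the image of the augmentation ideal, and the open complement, identified by \eqref{eq:introlau} with $\Spc(\stmod(G,\cat O))$.

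With this decomposition in hand, I would invoke the gluing principle for stratification across open-closed decompositions developed in \cite{bhs1}: the middle of such a smashing recollement is stratified precisely when both the closed piece and the open piece are. The open piece is stratified by Theorem A. For the closed piece, one needs to stratify $\cat D(\cat O[G])$ over $\Spec(\cat O)$; away from the residue characteristics dividing $|G|$ the group algebra $\cat O_\frakp[G]$ becomes semisimple, so stratification collapses onto that of $\cat D(\cat O_\frakp)$, while at the remaining primes Neeman's stratification of $\cat D(\cat O)$ transports across the induction--restriction adjunction for the finite-free extension $\cat O \to \cat O[G]$. Conversely, stratification of $\Rep(G,\cat O)$ passes to its smashing quotient $\StMod(G,\cat O)$ by a standard principle, yielding the equivalence alluded to in the excerpt.

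The main obstacle will be the first step: correctly identifying $\Spec^h(H^*(G;\cat O))$ as a well-behaved open-closed gluing of $\Spec(\cat O)$ and $\Spc(\stmod(G,\cat O))$, and verifying that the associated localization is smashing with the expected support-theoretic behavior. In particular, one must rule out that the primes of $\Spec(\cat O)$, once embedded inside $\Spec^h(H^*(G;\cat O))$ via the augmentation, give rise to unexpected localizing ideals in $\Rep(G,\cat O)$ beyond those already accounted for by Neeman's classification on the closed piece and Theorem A on the open piece. This is precisely the regime where the novel descent techniques for stratification advertised in the abstract presumably enter.
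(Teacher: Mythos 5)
Your overall strategy---deduce Theorem~B from Theorem~A together with Neeman's theorem for $\cat D(\cat O)$ by working across the recollement of \cref{prop:mainrecollement}---is the same as the paper's (\cref{thm:stratrepiffstmod}), and the ``easy'' direction you mention at the end (stratification passes from $\Rep(G,\cat O)$ to its finite localization $\StMod(G,\cat O)$ by \cref{thm:zariskidescent}) is correct. But the step you lean on for the hard direction has a genuine gap, and it sits exactly at the point you yourself flag as ``the main obstacle.'' There is no gluing principle in \cite{bhs1} of the form ``the middle of a smashing recollement is stratified precisely when the closed piece and the open piece are,'' for a structural reason: the closed piece here is the localizing tensor ideal $\Loco{\cat O[G]}\simeq \cat D(\cat O[G])=\Fun(BG,\cat D(\cat O))$, which is \emph{not} a rigidly-compactly generated tt-category (its unit, the trivial module $\cat O$, is not compact), so ``$\cat D(\cat O[G])$ is stratified over $\Spec(\cat O)$'' is not a well-posed instance of the theory you are invoking. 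What actually has to be proved is minimality of $\Gamma_x\Rep(G,\cat O)$ for each $x$ in the closed stratum $\Spec(\cat O)\subseteq\Spec^h(H^*(G;\cat O))$, equivalently that every localizing tensor ideal of $\Rep(G,\cat O)$ contained in $\Loco{\cat O[G]}$ is already detected by a subset of $\Spec(\cat O)$. Your remarks about semisimplicity away from $|G|$ and ``transporting Neeman across the induction--restriction adjunction'' gesture at this but do not constitute an argument; in particular, classifying localizing subcategories of $\cat D(\cat O[G])$ itself is hopeless (wild representation type), and the reason only $\otimes_{\cat O}$-ideals are controlled by $\cat D(\cat O)$ is precisely what needs proof.

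The paper closes this gap with nil-descent (\cref{thm:nildescent}) applied not to the recollement pieces but to the joint functor $\varphi^*=(\rho^*,i^*)\colon\Rep(G,\cat O)\to\StMod(G,\cat O)\times\cat D(\cat O)$, where $i^*$ is the forgetful functor to $\cat D(\cat O)$ (no group action)---note the target is $\cat D(\cat O)$, not $\cat D(\cat O[G])$. One then checks: $\varphi^*$ and $\varphi^!$ are jointly conservative (using \cref{lem:deraugconservative} and abstract local duality for the kernel of $\rho^!$), $\varphi_*$ is conservative ($\rho_*$ is fully faithful and $i_*$ is conservative by \cref{prop:restrictionetale}), and the induced map $\Spc(\stmod(G,\cat O))\sqcup\Spc(\Perf(\cat O))\to\Spc(\rep(G,\cat O))$ is a bijection by Lau's theorem. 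Stratification of the target is Theorem~A together with Neeman's theorem, and nil-descent then yields Theorem~B. If you want to salvage your open-closed framing, you would have to supply this projection-formula/adjunction argument (or an equivalent one) to establish minimality on the closed stratum; as written, the proposal assumes the conclusion there.
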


In order to connect to the category of $\cat O$-linear representations of $G$ as mentioned in the beginning, we provide an algebraic model of $\StMod(G,\cat O)$, generalizing the classical construction of the stable module category. For any commutative ring $R$, denote by $\projmod(G,R)$ the symmetric monoidal category of $R[G]$-modules whose underlying $R$-module is projective. This category carries an exact structure, obtained by pulling back the split-exact structure from $R$-modules. We prove that with this structure, $\projmod(G,R)$ is a symmetric monoidal Frobenius category. The corresponding Quillen model structure models the $\infty$-category $\StMod(G,R)$; in particular, there is a canonical symmetric monoidal equivalence on homotopy categories
\[
\xymatrix{\projmod(G,R)/\sim \ar[r]^-{\simeq} & \Ho(\StMod(G,R)).}
\]
Here, $\sim$ is the homotopy relation, which identifies two maps whenever their difference factors through a projective $R[G]$-module. With a suitable notion of localizing tensor ideals in exact categories (\cref{def:exlocalizing}), this allows a third interpretation of \cref{thm:a}, see \cref{thm:stratex}:

\begin{thmx}\label{thm:c}
Let $G$ be a finite group and suppose $\cat O$ is a Dedekind domain of characteristic $0$, then cohomological support induces a bijection
\[
\begin{Bmatrix}
\text{Non-zero localizing} \\
\text{ideals of } \cat \projmod(G,\cat O)
\end{Bmatrix} 
\simeq
\begin{Bmatrix}
\text{Subsets of} \\
\Spec^h(H^*(G;\cat O))\setminus\Spec(\cat O)
\end{Bmatrix}.
\]
\end{thmx}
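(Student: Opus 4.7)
The plan is to reduce \cref{thm:c} to \cref{thm:a} by exploiting the algebraic model of $\StMod(G,\cat O)$ furnished by the Frobenius exact category $\projmod(G,\cat O)$. Because $\projmod(G,\cat O)/\sim$ is symmetric monoidally equivalent to $\Ho(\StMod(G,\cat O))$, it suffices to establish a bijective correspondence
\[
\begin{Bmatrix}\text{Non-zero localizing ideals} \\ \text{of } \projmod(G,\cat O)\end{Bmatrix} \longleftrightarrow \begin{Bmatrix}\text{Localizing tensor ideals} \\ \text{of } \StMod(G,\cat O)\end{Bmatrix}
\]
and then invoke \cref{thm:a} together with the identification \eqref{eq:introlau} of the Balmer spectrum. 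The qualifier \emph{non-zero} is there precisely to remove the exact-categorical ideal consisting of the projective $\cat O[G]$-modules, which is the preimage of the zero subcategory of $\StMod(G,\cat O)$.

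First I would construct the forward map. Given an exact-categorical localizing ideal $\cat I\subseteq\projmod(G,\cat O)$ in the sense of \cref{def:exlocalizing}, set $\bar{\cat I}$ to be its image in $\Ho(\StMod(G,\cat O))$ under the quotient by $\sim$. Closure of $\cat I$ under the Frobenius-admissible short exact sequences becomes closure of $\bar{\cat I}$ under the triangles induced by the Frobenius structure; closure under direct summands, coproducts, and tensor product with arbitrary lattices passes to the quotient by construction. Conversely, given a localizing tensor ideal $\cat J\subseteq\StMod(G,\cat O)$, define
\[
\widetilde{\cat J}=\{\, M\in\projmod(G,\cat O) \mid [M]\in\cat J\,\}.
\]
The closure properties of $\cat J$ in $\StMod(G,\cat O)$ translate back through the equivalence $\projmod(G,\cat O)/\sim\simeq\Ho(\StMod(G,\cat O))$ into the axioms of an exact localizing ideal; the key point is that any conflation in $\projmod(G,\cat O)$ produces a triangle in $\StMod(G,\cat O)$ whose third vertex lies in $\cat J$ whenever the other two do, and conversely any object representing an element of $\cat J$ is captured by $\widetilde{\cat J}$.

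These two assignments are mutually inverse essentially by essential surjectivity of the Frobenius quotient and by the fact that the homotopy relation identifies exactly those maps that factor through projectives. Both the forward and backward maps are clearly monotone, so once bijectivity is in place one obtains a bijection with subsets of $\Spec^h(H^*(G;\cat O))\setminus\Spec(\cat O)$ via \cref{thm:a}; a direct comparison shows that this bijection sends an exact localizing ideal $\cat I$ to the union of the cohomological supports of its objects, matching the formulation in \cref{thm:c}.

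The principal technical obstacle will be the careful bookkeeping needed to show that the two notions of localizing ideal coincide under the quotient, in particular verifying that closure under the exact structure on $\projmod(G,\cat O)$ is equivalent to closure under the triangulated structure on $\StMod(G,\cat O)$ once projectives are disregarded. This involves checking that every distinguished triangle in $\StMod(G,\cat O)$ can be lifted to (or is stably equivalent to) an admissible short exact sequence in the Frobenius category $\projmod(G,\cat O)$, together with compatibility of the symmetric monoidal structures on both sides. Provided one sets up the Frobenius-model dictionary carefully, none of this should be deep, but it is the step that requires the most attention.
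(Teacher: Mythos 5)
Your overall architecture matches the paper's: reduce to \cref{thm:a} via the algebraic model of \cref{prop:algebraicmodel} and a bijection between non-zero localizing tensor ideals of $\projmod(G,\cat O)$ and localizing tensor ideals of $\StMod(G,\cat O)$ (this is the paper's \cref{lem:alglocalizingideals}). However, there is a genuine gap at the one step that actually carries content. Your two assignments $\cat I\mapsto\bar{\cat I}$ and $\cat J\mapsto\widetilde{\cat J}$ are \emph{not} mutually inverse ``essentially by essential surjectivity'': the composite $\cat I\mapsto\widetilde{\overline{\cat I}}$ always contains every projective $\cat O[G]$-module (projectives become zero in the quotient, and $0\in\bar{\cat I}$), so to recover $\cat I$ you must first prove that every non-zero localizing \emph{tensor} ideal of $\projmod(G,\cat O)$ already contains all projectives. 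This is exactly where the tensor-ideal hypothesis is used and where the paper's argument lives: for any non-zero $X\in\cat I$ the object $\cat O[G]\otimes X$ is a non-zero projective lying in $\cat I$ (by \cref{lem:exactstructure}), whence $\cat O[G]\in\cat I$ and therefore all projectives lie in $\cat I$. Your write-up never invokes the ideal property for this purpose, and without it the claimed bijection fails (a localizing subcategory closed under conflations and summands but not under $\otimes$ need not contain the projectives).

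Relatedly, your explanation of the qualifier ``non-zero'' is backwards. The ideal consisting of all projective modules is \emph{not} removed; it is a non-zero localizing tensor ideal and it is precisely the one corresponding to the zero ideal of $\StMod(G,\cat O)$, i.e.\ to the empty subset of $\Spec^h(H^*(G;\cat O))\setminus\Spec(\cat O)$. What ``non-zero'' excludes is only the trivial ideal $\{0\}$, which would otherwise have the same image as the ideal of projectives and destroy injectivity. If you discarded the ideal of projectives as you propose, the empty subset on the right-hand side would have no preimage and the statement would not be a bijection. Once you insert the $\cat O[G]\otimes X$ argument and correct this bookkeeping, the rest of your reduction (Frobenius dictionary between conflations and triangles, then \cref{cor:stratstmod}) is the paper's proof.
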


There is a corresponding classification of finitely presented representations, see \cref{rem:tstex}, which we deduce directly from \eqref{eq:introlau}. Since the latter holds for any regular Noetherian commutative ring $R$, we obtain a generic classification of all finitely presented $R$-linear $G$-representations in this case.

\subsection{Overview of methods}

This work takes place in the context of stratification in tt-geometry, as systematically developed in \cite{bhs1}. It is modelled on the notion of stratification introduced and used by Benson, Iyengar, Krause (BIK) \cite{BensonIyengarKrause11a,BensonIyengarKrause11b}, but there are some distinct differences: While BIK-stratification relies on an auxiliary action of a commutative ring on the given tt-category $\cat T$, our notion of stratification is relative to the Balmer spectrum of $\cat T^c$ and thus intrinsic. This allows us to separate the computation of the spectrum from verifying the stratification \emph{condition}. As a consequence, stratification in our sense exhibits excellent permanence properties, such as a version of Zariski descent (\cite[Thm.~8.11]{Stevenson13}, \cite[Cor.~5.5]{bhs1}). 

In order to control stratification for the tt-categories of interest and to reduce it to simpler cases, we establish a version of finite \'etale descent as well as nil-descent for stratification. The former is a significant strengthening of \cite[Thm.~6.4]{bhs1} and relies on work of Balmer \cite{Balmer16b}, while the latter is inspired by a recent result of Shaul--Williamson \cite{sw_descent} on BIK-stratification. The terminology used in stating the next theorem is introduced in \cref{sec:ttgeometry}.

\begin{thmx}\label{thm:d}
Suppose $f^*\colon \cat S \to \cat T$ is a functor of rigidly-compactly generated tt-categories. Assume that one of the following two conditions is satisfied:
    \begin{enumerate}
        \item (\'Etale descent) $f^*$ is a conservative finite \'etale tt-functor.
        \item (Nil-descent) $f^*$, its right adjoint $f_*$, as well as the further right adjoint $f^!$ are conservative, and $f^*$ induces a bijection on Balmer spectra.
    \end{enumerate}
If $\cat T$ is stratified, then so is $\cat S$.\footnote{A precursor of this result was found during a joint project with Castellana, Heard, and Sanders. A more comprehensive treatment will be part of forthcoming work with them together with Naumann and Pol.}
\end{thmx}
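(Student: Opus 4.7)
The plan is to reduce stratification of $\cat S$ to the three ingredients isolated in \cite{bhs1}: weak Noetherianity of $\Spc(\cat S^c)$, the local-to-global principle, and minimality at each point of the spectrum. In both cases (a) and (b), these properties will be transported from $\cat T$ using the adjunction $(f^*,f_*)$ together with conservativity; the finer structure that does the work differs, being Balmer's separable descent in case (a), and the bijectivity-on-spectra together with the extra adjoint $f^!$ in case (b).

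First I would verify weak Noetherianity of $\Spc(\cat S^c)$. In case (a), Balmer's theorem \cite{Balmer16b} identifies $\varphi := \Spc(f^*)\colon\Spc(\cat T^c) \to \Spc(\cat S^c)$ as a finite surjective (by conservativity) spectral map, enough to descend the property. In case (b), the assumed bijection is automatically a homeomorphism since $\varphi$ is spectral and Thomason subsets correspond under it. A key intermediate compatibility is that the Balmer--Favi idempotents interact well with $f^*$: for each $\mathfrak{q}\in\Spc(\cat S^c)$ and $a \in \cat S$,
\[
f^*\bigl(\Gamma_{\mathfrak q} a\bigr) \;\simeq\; \bigoplus_{\mathfrak p \in \varphi^{-1}(\mathfrak q)} \Gamma_{\mathfrak p}\bigl(f^* a\bigr),
\]
which follows since $f^*$ preserves colimits and Thomason subsets pull back along $\varphi$. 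In case (b) the right-hand side is a single summand.

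Next I would establish pointwise minimality. Fix $\mathfrak q \in \Spc(\cat S^c)$ and a nonzero object $x \in \Gamma_{\mathfrak q}\cat S$. Conservativity of $f^*$ yields $f^* x \neq 0$, and by the compatibility above at least one component $\Gamma_{\mathfrak p}(f^* x)$ for some $\mathfrak p \in \varphi^{-1}(\mathfrak q)$ is nonzero. Minimality of $\cat T$ at $\mathfrak p$ then gives $\Loco{f^* x} \supseteq \Gamma_{\mathfrak p}\cat T$. To descend this back to $\cat S$ I would apply $f_*$, use the projection formula $f_*(f^* a \otimes b) \simeq a \otimes f_* b$ to produce $\Loco{x} \otimes f_*\unit \supseteq \Gamma_{\mathfrak q}(f_*\unit)$, and finish using that $f_*\unit$ is a faithful algebra in $\cat S$ (again by conservativity of $f^*$) so that $\Gamma_{\mathfrak q}(f_*\unit)$ detects the whole stratum $\Gamma_{\mathfrak q}\cat S$. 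The local-to-global principle admits a parallel argument: decompose any $a \in \cat T$ using stratification of $\cat T$, apply $f_*$, invoke the projection formula, and again use faithfulness of $f_*\unit$ to transfer LGP to $\cat S$.

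The main obstacle, I expect, is making the minimality descent rigorous in case (a), since one needs to pass from a containment of localizing tensor ideals in $\cat T$ to one in $\cat S$, not merely a containment of their supports. This is precisely where separability of $f_*\unit$ enters: it yields a Beck--Chevalley-type descent for localizing tensor ideals along $f^*$, upgrading the support-level descent of \cite[Thm.~6.4]{bhs1}. In case (b), the bijection on spectra combined with the further right adjoint $f^!$ plays an analogous role, allowing $f^!$ to be substituted for $f_*$ in the faithfulness step; this parallels the strategy of \cite{sw_descent} for BIK-stratification along nilpotent extensions, and indeed conservativity of both $f^*$ and $f^!$ is the tt-geometric analogue of the hypothesis in loc.~cit.
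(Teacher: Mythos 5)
There are genuine gaps in both cases. For \'etale descent, your minimality argument uses only \emph{one} point $\mathfrak p$ of the fiber $\varphi^{-1}(\mathfrak q)$ where $\Gamma_{\mathfrak p}(f^*x)\neq 0$, and the push-down then stalls: from $\Gamma_{\mathfrak p}\unit\in\Loco{f^*x}$ you obtain (via separability) $f_*(\Gamma_{\mathfrak p}\unit)\in\Loco{x}$, an object whose support is $\{\mathfrak q\}$ --- but concluding $\Gamma_{\mathfrak q}\unit\in\Loco{f_*(\Gamma_{\mathfrak p}\unit)}$ is exactly minimality at $\mathfrak q$ in $\cat S$, i.e.\ circular. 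The paper's proof avoids this by (i) first localizing so that $\mathfrak q$ becomes a closed point with Thomason fiber (\cref{lem:closedfiber}, \cref{lem:generalpointcriterion}), (ii) proving that $\Supp(f^*s)$ is the \emph{entire} fiber (\cref{lem:indfullsupp}, which needs compact objects $\kappa(\cat P)$ supported at the closed fiber points and the half-tensor formula), so that minimality at \emph{all} fiber points plus the local-to-global principle puts the honest idempotent $f^*(e_{\{\mathfrak m\}})=e_{\varphi^{-1}(\mathfrak m)}$ into $\Loco{f^*s}$, and (iii) pushing down to get $e_{\{\mathfrak m\}}\otimes f_*\unit\in\Loco{s}$ and then invoking the general fact that $t\in\Loco{t\otimes c}$ whenever $\Supp(t)\subseteq\supp(c)$ for $c$ compact --- your appeal to ``faithfulness of $f_*\unit$'' is standing in for this specific lemma, and it only applies because $\mathfrak m\in\supp(f_*\unit)$. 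Relatedly, your object-level decomposition $f^*(\Gamma_{\mathfrak q}a)\simeq\bigoplus_{\mathfrak p\in\varphi^{-1}(\mathfrak q)}\Gamma_{\mathfrak p}(f^*a)$ is not justified: what Balmer--Favi base change gives is $f^*(\Gamma_{\mathfrak q}\unit)\simeq g_{\cat T}(\varphi^{-1}(\mathfrak q))$, the idempotent of the whole fiber, and splitting it into point idempotents is itself an instance of the local-to-global principle, available at the level of generated ideals but not as a direct sum of objects at a general point.

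For nil-descent the proposed route does not work at all as described: there is no separability hypothesis in case (b), so the ``Beck--Chevalley descent for localizing tensor ideals'' that you lean on in case (a) (Balmer's lemma that $f_*^{-1}\Loco{f_*f^*x}$ is a tensor ideal) is unavailable, and substituting $f^!$ for $f_*$ does not restore it. The paper's argument is of a different nature: it uses the base-change formulas $\Supp(f^*s)=\varphi^{-1}\Supp(s)$ and $\Cosupp(f^!s)=\varphi^{-1}\Cosupp(s)$ (here conservativity of all three adjoints enters), the identity $\Cosupp\iHom(t_1,t_2)=\Supp(t_1)\cap\Cosupp(t_2)$ valid in the stratified category $\cat T$, and then the Hom-nonvanishing criterion for minimality (\cref{lem:minimality_crit}) to produce, for nonzero $s_1,s_2\in\Gamma_{\cat P}\cat S$, an object $f_*z$ with $\Hom_{\cat S}((f_*z)\otimes s_1,s_2)\neq 0$. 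This bypasses any descent of localizing ideals. You would need to either adopt this cosupport argument or supply a genuinely new mechanism for case (b).
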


We apply \'etale descent for stratification to reduce from arbitrary finite groups to elementary abelian groups, i.e., groups of the form $(\bbZ/p)^{\times r}$ for some prime $p$ and rank $r\ge 1$. In more detail, restriction induces a finite \'etale tt-functor
\[
\xymatrix{\res\colon \StMod(G,R) \ar[r] & \prod_{E \subseteq G}\StMod(E,R),}
\]
where $E$ ranges through the elementary abelian subgroups of $G$. We then prove a derived version of Chouinard's theorem \cite{Chouinard76} that says that $\res$ is conservative; here, the main ingredient is a theorem of Carlson \cite{Carlson00}. We may thus apply \'etale descent to reduce to the case of elementary abelian groups. This reduction step is analogous to the strategy employed in \cite{BensonIyengarKrause11a}, but does not make use of any form of Quillen stratification for integral group cohomology \cite{Quillen71} as an input. In fact, writing $\orbit_{\cat E}(G)$ for the $G$-orbit category on elementary abelian subgroups, we \emph{derive} a generalization of Quillen stratification. It takes the form of a homeomorphism
\[
\xymatrix{\colim_{\orbit_{\cat E}(G)}\Spec^h(H^*(E;R)) \ar[r]^-{\simeq} & \Spec^h(H^*(G;R)),}
\]
for any regular commutative ring $R$, see \cref{cor:quillenstratification}.

Consider then a $p$-modular system $(K,A,k)$, which consists of complete discrete valuation ring $A$ of mixed characteristic $(0,p)$ with residue field $k$ and quotient field~$K$. An important and difficult problem in modular representation theory is to decide when representations lift from $k$ to $A$. Having reduced to elementary abelian $p$-groups $E$ and using special properties of their cohomology as well as nil-descent, the next step is to prove a lifting result (\cref{cor:elabgenericlifting}) for localizing tensor ideals:

\begin{thmx}\label{thm:e}
Let $A$ be a complete DVR of mixed characteristic $(0,p)$ and $E$ an elementary abelian $p$-group, then 
the reduction map $A \to k$ induces a bijection
\begin{equation}\label{eq:introgenericlifting}
\begin{Bmatrix}
\text{Localizing tensor ideals} \\
\text{of } \cat \StMod(E,A)
\end{Bmatrix} 
\xymatrix@C=2pc{ \ar[r]^-{\sim} &}
\begin{Bmatrix}
\text{Localizing tensor ideals} \\
\text{of } \cat \StMod(E,k)
\end{Bmatrix}.
\end{equation}
\end{thmx}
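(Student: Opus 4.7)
The plan is to deduce the theorem from nil-descent for stratification (\cref{thm:d}(2)), taking the stratification of $\StMod(E,k)$ as input.

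I would first set up the comparison functor. The reduction $A\to k$ induces a morphism of commutative ring spectra at the level of Tate constructions in genuine $E$-spectra, yielding a tt-functor
\[
f^*\colon\StMod(E,A)\too\StMod(E,k)
\]
by base change. The right adjoint $f_*$ is restriction of scalars; since $k=A/\pi$ is a perfect $A$-module admitting the length-one resolution $A\xrightarrow{\pi}A$, $f^*$ admits a further right adjoint $f^!$, and a short computation with fibers and cofibers identifies $f^!\simeq f^*[-1]$.

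Next I would verify the three conservativity conditions of \cref{thm:d}(2). Conservativity of $f_*$ is immediate, and $f^!$ is conservative iff $f^*$ is, by the shift identification above. For $f^*$, assume $k\otimes_A M\simeq 0$ in $\StMod(E,k)$; the cofiber sequence $M\xrightarrow{\pi}M\to k\otimes_A M$ inside $\StMod(E,A)$ shows that $\pi$ acts invertibly on $M$, which combined with the derived $\pi$-completeness inherited from the complete DVR $A$ forces $M\simeq 0$.

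The main obstacle will be the Balmer-spectrum bijection hypothesis of \cref{thm:d}(2). Using~\eqref{eq:introlau}, this reduces to showing that reduction modulo $\pi$ induces a homeomorphism
\[
\Spec^h(H^*(E;k))\sminus\{\mathfrak{m}_k\}\isoto\Spec^h(H^*(E;A))\sminus\Spec(A).
\]
The special structure of $E$ intervenes here: the integral cohomology of an elementary abelian $p$-group is, modulo nilpotents, a polynomial algebra over $A$ on the integral Bocksteins of its mod-$p$ degree-one generators, so the reduction map $H^*(E;A)\to H^*(E;k)$ has kernel and cokernel that are $\pi$-power torsion. On homogeneous spectra, this identifies the graded primes of the two rings away from the augmentation, matching the removal of $\Spec(A)$ on one side with the removal of the irrelevant locus on the other. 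Granted these inputs, \cref{thm:d}(2) transfers stratification from $\StMod(E,k)$ to $\StMod(E,A)$, and the bijection~\eqref{eq:introgenericlifting} then follows immediately from the bijective correspondence between localizing tensor ideals and subsets of the Balmer spectrum in any stratified tt-category.
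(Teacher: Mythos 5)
Your overall strategy is the paper's: nil-descent along the reduction $A\to k$ (\cref{thm:nildescent}), with the Benson--Iyengar--Krause stratification of $\StMod(E,k)$ and Lau's identification of the Balmer spectra as inputs; your analysis of $\Spec^h(H^*(E;k))\to\Spec^h(H^*(E;A))$ being injective with image the complement of the augmentation locus is essentially the paper's \cref{lem:quotmapelabcohom}, and the final passage from stratification to the bijection of localizing tensor ideals is the same concatenation as in \cref{cor:elabgenericlifting}.

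There is, however, a genuine gap in your conservativity argument for $f^*$. From $k\otimes_A M\simeq 0$ you correctly deduce that $\pi$ acts invertibly on $M$, but you then conclude $M\simeq 0$ from ``derived $\pi$-completeness inherited from the complete DVR $A$''. Completeness of $A$ does not make every object of a module category over it derived $\pi$-complete (already $K=A[1/\pi]\in\cat D(A)$ is not), so this justification fails as stated. The correct mechanism --- and the place where the special structure of the situation actually enters --- is that the unit of $\StMod(E,A)$ is bounded $\pi$-power torsion: by \cref{prop:stmodtate} one has $\pi_0\End(\unit)\cong\tH^0(E;A)\cong A/p^r$, and indeed $p^r\cdot\tH^*(E;A)=0$ (\cref{lem:elabtatecohom}), so $p^r\cdot\id_M=0$ for every object $M$ and $\pi$ cannot act invertibly on a nonzero object. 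The paper packages this input as $A^{tE}\in\Thick(k^{tE})$ (\cref{prop:elabtategeneration}), which it also uses for the surjectivity of the spectrum map and --- via the identification $\StMod(E,k)\simeq\Mod(k^{tE})$, i.e.\ generation by the trivial module because $E$ is a $p$-group --- for the conservativity of $f_*$, a point you declare ``immediate'' but which does require that identification since $f_*$ is only defined on the Verdier quotients. Once these two steps are supplied, your argument goes through and coincides with the paper's proof.
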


Informally speaking, the isomorphism \eqref{eq:introgenericlifting} expresses a kind of generic\footnote{Here and throughout, the term `generic' is used in the sense of Hopkins~\cite{Hopkins87}: it means ``up to the equivalence relation on objects given by the tt-ideal they generate''.} modular lifting: While a given $k$-linear representation might not be liftable to $A$, it can be lifted generically, i.e., up to the tt-geometric operations available in $\StMod(G,A)$. Together with Lau's computation, \cref{thm:e} allows us to lift stratification of $\StMod(E,k)$ to stratification for $\StMod(E,A)$. The field case was proved by Benson, Iyengar, and Krause \cite{BensonIyengarKrause11a}, but we also include a self-contained homotopy-theoretic argument for the modular case, due to Mathew.  

It now remains to glue the stratifications of $\StMod(E,\cdvr{\cat O}{\frak p})$ together over the points $\frak p$ of the spectrum $\Spec(\cat O)$. To this end, there is the following arithmetic local-to-global theorem (\cref{cor:arithmeticstmodltg1}) for the stable module category:

\begin{thmx}\label{thm:f}
Let $G$ be a finite group and let $\cat O$ be a Dedekind domain in which $|G|\neq 0$. The completion maps $\cat O \to \cdvr{\cat O}{\frak p}$ induce a symmetric monoidal equivalence
\[
\xymatrix{\StMod(G,\cat O) \ar[r]^-{\sim} & \prod_{\frak p\mid |G|}\StMod(G, \cdvr{\cat O}{\frak p}),}
\]
where the product is indexed on prime ideals of $\cat O$ dividing the order of $G$.
\end{thmx}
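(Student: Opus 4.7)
The plan is to exploit the observation that the whole stable module category is $|G|$-torsion, reducing the theorem to an arithmetic fracture square. Setting $S := \prod_{\frak p \mid |G|} \cdvr{\cat O}{\frak p}$, I would first note that since $S$ is a finite product of rings, $\Perf(S) \simeq \prod_\frak p \Perf(\cdvr{\cat O}{\frak p})$, and this decomposition passes through $\Fun(BG, -)$ and the Verdier quotient defining $\StMod$ to yield $\StMod(G, S) \simeq \prod_\frak p \StMod(G, \cdvr{\cat O}{\frak p})$. So it suffices to show that the completion map $\phi \colon \cat O \to S$ induces a symmetric monoidal equivalence $\phi^* \colon \StMod(G, \cat O) \isoto \StMod(G, S)$.

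The key input is the arithmetic fracture square
\[
\xymatrix{\cat O \ar[r]^-\phi \ar[d] & S \ar[d] \\ \cat O[1/|G|] \ar[r] & S[1/|G|]}
\]
which exhibits $\cat O$ as the pullback of $S$ and $\cat O[1/|G|]$ over $S[1/|G|]$: this is standard arithmetic fracture for the Dedekind domain $\cat O$ with respect to the ideal $(|G|)$, whose associated primes are exactly the $\frak p \mid |G|$. Applying $R \mapsto \Fun(BG, \Der(R))$ sends this to a pullback square of presentable $\bbZ$-linear tt-categories, and since $R \mapsto R[G]$ is given by tensoring with the finitely generated free $\bbZ$-module $\bbZ[G]$, the subcategories $\Perf(R[G])$ are also respected by the pullback. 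Passing to Verdier quotients then produces a pullback square of stable module categories.

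To finish, I would observe that $\StMod(G, R) = 0$ whenever $|G| \in R^\times$: a Maschke-style averaging argument exhibits every $R[G]$-module that is projective as an $R$-module as a direct summand of a free $R[G]$-module, hence $R[G]$-projective. This vanishes both $\StMod(G, \cat O[1/|G|])$ and $\StMod(G, S[1/|G|])$, so the fracture pullback collapses to the desired equivalence. The main obstacle will be verifying that the Verdier quotient defining $\StMod$ descends through the arithmetic fracture pullback. A more hands-on alternative is to use that in any $\StMod(G, R)$ the unit $\unit = R$ is annihilated by $|G|$---the composite $R \xra{1 \mapsto \sum_g g} R[G] \xra{\epsilon} R$ equals multiplication by $|G|$ and factors through the projective $R[G]$---so the entire category is $R/|G|$-linear, and the Chinese Remainder isomorphism $\cat O/|G| \cong \prod_\frak p \cdvr{\cat O}{\frak p}/|G| = S/|G|$ (using that $|G|$ and $\frak p^{v_\frak p(|G|)}$ generate the same ideal in $\cdvr{\cat O}{\frak p}$) together with a conservativity check exhibits $\phi^*$ as an equivalence directly.
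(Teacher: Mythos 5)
Your argument is correct and rests on the same two pillars as the paper's: an arithmetic fracture square for categories of representations, and the Maschke-type vanishing of the stable module category when $|G|$ is invertible (the paper's \cref{cor:stmodcharacteristic}, proved there via $\pi_0\unit\cong R/|G|$, which also justifies your ``hands-on'' observation that $\unit$ is killed by $|G|$). The differences are in the execution. First, you fracture along $\cat O\to\cat O[1/|G|]$ and the $(|G|)$-adic completion $S=\prod_{\frak p\mid|G|}\cdvr{\cat O}{\frak p}$, whereas the paper (\cref{thm:arithmeticrepltg}) uses the fraction field $K$ and \emph{all} non-zero primes and then needs an extra step to discard the primes not dividing $|G|$; your version is a little more economical. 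Second, and more substantively, you push the pullback of categories through the Verdier quotient to obtain a pullback square of stable module categories and then collapse it. That is exactly the route the paper only sketches in the remark following \cref{cor:arithmeticstmodltg1}, where the compatibility of Verdier sequences with finite limits is delegated to Nikolaus--Scholze; the paper's actual proof deliberately sidesteps this point by applying $\rho^*$ to the fracture square viewed as a diagram of commutative algebra objects \emph{inside} $\Rep(G,\cat O)$, deducing $\unit\simeq\rho^*(S)$, and then decomposing $\StMod(G,\cat O)\simeq\Mod_{\StMod(G,\cat O)}(\rho^*S)$ into a finite product of module categories. You correctly flag the Verdier-quotient step as the main obstacle. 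One input you state too casually: that $R\mapsto\Fun(BG,\cat D(R))$ turns the ring-level fracture square into a pullback of categories is not a formal consequence of the square of rings being a pullback --- it is the categorified arithmetic fracture/adelic reconstruction theorem (Balchin--Greenlees in the paper), and should be cited rather than absorbed into ``standard arithmetic fracture''. With that reference supplied, your proof goes through.
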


This result is essentially due to Krause \cite{krause_picard}, who proved it for $\cat O = \bbZ$, but his argument extends easily to the case of Dedekind domains of characteristic $0$. Our approach however is different, as we deduce it from an arithmetic pullback square for $\Rep(G,\cat O)$ that includes the Archimedean place as well. 

In conclusion, \cref{thm:e} and \cref{thm:f} together imply stratification of the stable module category for elementary abelian groups $E$, which then yields \cref{thm:a} by virtue of \'etale descent. A schematic summary of the entire strategy is given in the next diagram.

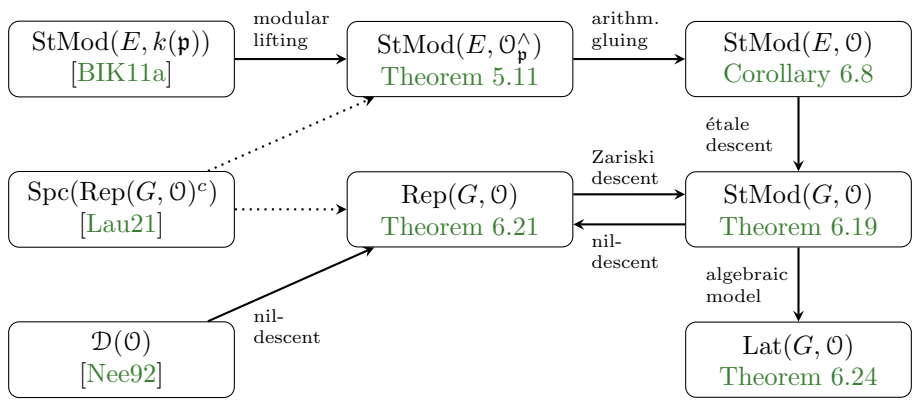
\begin{figure}[h]
\centering
\begin{tikzpicture}[node distance=2cm]
\node (S1) [category] {$\StMod(E,k(\frak p))$ \\ \cite{BensonIyengarKrause11a}};
\node (S2) [category, right of=S1, xshift=2.5cm] {$\StMod(E,\cdvr{\cat O}{\frak p})$ \\ \cref{thm:elabstmodstratification}};
\node (S3) [category, right of=S2, xshift=2.5cm] {$\StMod(E,\cat O)$ \\ \cref{cor:arithmeticstmodltg1}};

\node (L) [category, below of=S1] {$\Spc(\Rep(G,\cat O)^c)$ \\ \cite{lau_spcdmstacks}};

\node (S5) [category, below of=S2] {$\Rep(G,\cat O)$ \\ \cref{thm:stratrep}};
\node (S4) [category, below of=L] {$\cat D(\cat O)$ \\ \cite{Neeman92a}};
\node (S6) [category, below of=S3] {$\StMod(G,\cat O)$ \\ \cref{thm:stratstmod}};

\node (S7) [category, below of=S6] {$\projmod(G,\cat O)$ \\ \cref{thm:stratex}};

\draw [arrow] (S1) -- node[anchor=south] {\scriptsize\parbox{1cm}{modular lifting}} (S2);
\draw [arrow] (S2) -- node[anchor=south] {\scriptsize\parbox{1cm}{arithm. gluing}} (S3);
\draw [arrow] (S3) -- node[anchor=east] {\scriptsize\parbox{1.1cm}{\'etale \\ descent}} (S6);

\draw [arrow, transform canvas={yshift = -0.2cm}] (S6) -- node[anchor=north] {\scriptsize\parbox{1cm}{nil-descent}} (S5);
\draw [arrow, transform canvas={yshift = 0.2cm}] (S5) -- node[anchor=south] {\scriptsize\parbox{1cm}{Zariski descent}}  (S6);

\draw [arrow] (S6) -- node[anchor=east] {{\scriptsize\parbox{1.1cm}{algebraic model}}} (S7);

\draw [arrow2] (L) -- (S2);
\draw [arrow2] (L) -- (S5);
\draw [arrow] (S4) -- node[anchor=north, yshift=-0.2cm] {{\scriptsize\parbox{1cm}{nil-descent}}} (S5);

\end{tikzpicture}
\caption{Leitfaden.}
\end{figure}

\subsection{Conventions and terminology}\label{ssec:conventions}

Throughout this paper, we will work in the setting of Balmer's tensor-triangular geometry \cite{Balmer05a} and in particular use the language of stratification developed in \cite{bhs1}. All triangulated categories occurring in this paper have a model, so that the term ``tt-category'' usually refers either to a symmetric monoidal stable $\infty$-category or its homotopy category. Only in places were the difference might cause confusion, we will make a notational distinction. In addition, we therefore (implicitly or explicitly) use Lurie's theory of higher algebra \cite{HTTLurie,HALurie} whenever convenient. This becomes especially potent in the model-independent construction of the integral stable module category. The reader not familiar with the theory of $\infty$-categories may read entirely on the triangulated level, i.e., pass to homotopy categories. The only result that relies crucially on an enrichment is \cref{thm:arithmeticrepltg}, but it is not required to prove the main theorem.  

\subsection*{Acknowledgements}

I would like to thank Scott Balchin, Paul Balmer, Rudradip Biswas, Nat\`alia Castellana, Drew Heard, Achim Krause, Henning Krause, and Beren Sanders for discussions related to this work. Special thanks go to Paul Balmer for numerous helpful comments on an earlier draft of this manuscript. 

\section{Stratification in tt-geometry and descent}\label{sec:ttgeometry}

Our main results are formulated in the language of stratification in tt-geometry, which was developed in \cite{bhs1} and unifies previous work of Balmer--Favi \cite{BalmerFavi11} and Stevenson \cite{Stevenson13} as well as Benson, Iyengar, and Krause \cite{BensonIyengarKrause08,BensonIyengarKrause11b}. We summarize the basics of the theory in \cref{ssec:stratificationbasics}. Our approach \emph{separates} the problem of classifying localizing tensor ideals of a given tt-category $\cat T$ into two parts: 
\begin{enumerate}
    \item Establish stratification for $\cat T$ over its Balmer spectrum $\Spc(\cat T^c)$.
    \item Compute the Balmer spectrum of $\cat T^c$. 
\end{enumerate}
This is in sharp contrast to the strategy of Benson, Iyengar, and Krause, which uses an auxiliary action of a ring $R$ on $\cat T$ to try to settle both problems \emph{simultaneously}. One of the advantages of the separation lies in the excellent permanence properties of stratification in our sense, expressed through three types of descent (see \cref{ssec:stratificationdescent}).

\subsection{Stratification over the Balmer spectrum}\label{ssec:stratificationbasics}

We begin with a rapid introduction to the basics of tt-geometry \cite{Balmer05a}, via the perspective developed in \cite{kockpitsch17}. Let $\cat K$ be a small rigid tt-category. The operations $\oplus$ and $\otimes$ equip the collection of principal thick tensor ideals of $\cat K$ with the structure of a distributive lattice, which under Stone duality corresponds to a spectral space. The Balmer spectrum $\Spc(\cat K)$ of $\cat K$ is then defined as the Hochster dual of this space. This construction extends to a functor: Any tt-functor $f\colon \cat K \to \cat L$ gives a spectral map on Balmer spectra via pullback. Moreover, by \cite[Prop.~3.13]{Balmer05a}, the map $\cat K \to \cat K^{\natural}$ from a tt-category to its idempotent completion induces a homeomorphism on spectra. The assignment $\cat K \mapsto \Spc(\cat K)$ is analogous to the passage from a commutative ring to its Zariski spectrum. We refer the reader to \cite{Balmer05a, Balmer10b} for more details on this construction and its key properties. 

The Balmer spectrum affords a notion of support $\supp$ for objects in $\cat K$ which is compatible with the tt-structure of $\cat K$. In particular, it induces a map from thick tensor ideals of $\cat K$ to subsets of $\Spc(\cat K)$. The support of any object is closed with quasi-compact complement, so the support of any thick tensor ideals is Thomason (aka dual-open), i.e., it can be written as a union of closed subsets with quasi-compact complement. Essentially by construction, this map induces a bijection
\begin{equation}\label{eq:balmerclassification}
\supp\colon\begin{Bmatrix}
\text{Thick tensor ideals} \\
\text{of } \cat K
\end{Bmatrix} 
\xymatrix@C=2pc{ \ar[r]^-{\sim} &}
\begin{Bmatrix}
\text{Thomason subsets of}  \\
\Spc(\cat K)
\end{Bmatrix}.
\end{equation}
In fact, the fundamental theorem of tt-geometry \cite[Thm.~3.2 and Thm.~5.2]{Balmer05a} states that the pair $(\Spc(\cat K),\supp)$ is universal with this property. 

Switching now to suitable big tt-categories, Balmer and Favi \cite{BalmerFavi11} introduced an extension of the support function from $\cat T^c$ to $\cat T$ which will be instrumental for our theory of stratification. For simplicity of exposition, we will impose the following finiteness condition throughout this paper; see also \cref{rem:weaklynoetherian}.

\begin{Conv}
Throughout the remainder of this section, we will assume that the small tt-categories under consideration have Noetherian Balmer spectrum. 
\end{Conv}

All concrete examples of tt-categories appearing in this paper will satisfy this property. One useful consequence is that a subset of the Balmer spectrum is Thomason if and only if it is specialization closed. We also note that Noetherian spaces have good permanence properties: Every subspace as well as target of a surjective continuous map of a Noetherian space is Noetherian (see \cite[Cor.~8.1.6]{DickmannSchwartzTressl19}). 

\begin{Rec}\label{rec:indcat}
Let $\cat T$ be a rigidly-compactly generated tt-category, i.e., a compactly generated tt-category in which the dualizable objects coincide with the compact ones. We write $\cat T^c$ for the full subcategory of compact objects in $\cat T$. Conversely, if $\cat K$ is a rigid tt-category which admits an $\infty$-categorical enhancement, we can pass to the corresponding ind-category $\Ind(\cat K)$, which has the effect of formally adding filtered colimits to $\cat K$. The $\infty$-category $\Ind(\cat K)$ inherits the structure of a compactly generated tt-category by \cite[\S5.3.5]{HTTLurie} \cite[Prop.~1.1.3.6 and Cor.~4.8.1.14]{HALurie}. In fact, the operations $\Ind$ and $(-)^c$ are almost inverse to each other \cite[Prop.~5.5.7.8]{HTTLurie}: While $\cat T = \Ind(\cat T^c)$, the canonical map $\cat K \to \Ind(\cat K)^c$ exhibits the target as the idempotent completion $\cat K^{\natural}$ of $\cat K$.
\end{Rec}

Balmer and Favi's construction proceeds as follows: Let $\cat T$ be a rigidly-compactly generated tt-category. First, any Thomason subset $\cat Y \subseteq \Spc(\cat T^c)$ yields a triangle of idempotents
\[
\xymatrix{e_{\cat Y} \to \unit \to f_{\cat Y}}
\]
representing colocalization and localization at the thick tensor ideal $\cat T^c_{\cat Y} = \supp^{-1}(\cat Y)$ corresponding to $\cat Y$ under \eqref{eq:balmerclassification}. Since $\Spc(\cat T^c)$ is assumed to be Noetherian, any singleton $\{x\} \subseteq \Spc(\cat T^c)$ can be written as an intersection $\{x\} = \cat V \cap \cat W^c$ for Thomason subsets $\cat V, \cat W \subseteq \Spc(\cat T^c)$. Define idempotents $\Gamma_{x}\unit = e_{\cat V} \otimes f_{\cat W}$; this is independent of the choices made. We can extend this to functors $\Gamma_x,\Lambda^x\colon \cat T \to \cat T$ by 
\[
\Gamma_xt = t \otimes \Gamma_x\unit \qquad \text{and} \qquad \Lambda^xt = \iHom(\Gamma_x\unit, t);
\]
by construction, $\Gamma_x$ is left adjoint to $\Lambda^x$. With these definitions at hand, we are ready to construct support and cosupport functions for big objects:

\begin{Def}
Let $t \in \cat T$. The \emph{support} and \emph{cosupport} of $t$ are given by
\[
\Supp(t) = \SET{x \in \Spc(\cat T^c)}{\Gamma_{x}t \neq 0} \quad \text{and} \quad \Cosupp(t) = \SET{x \in \Spc(\cat T^c)}{\Lambda^{x}t \neq 0}.
\]
\end{Def}

\begin{Rem}
When $t \in \cat T$ is compact, then $\Supp(t) = \supp(t)$. However, in general the support may differ significantly from the cosupport even for compact objects (see for example \cite[Ex.~11.1]{BIK_cosupport} or \cite{bchs1}). 
\end{Rem}

The Balmer--Favi notion of supports extends naturally to localizing tensor ideals, by setting $\Supp(\cat L) = \bigcup_{l\in\cat L}\Supp(l)$ for any localizing tensor ideal $\cat L \subseteq \cat T$: 
\begin{equation}\label{eq:lococlassification}
\Supp\colon\begin{Bmatrix}
\text{Localizing tensor ideals} \\
\text{of } \cat T
\end{Bmatrix} 
\xymatrix@C=2pc{ \ar[r] &}
\begin{Bmatrix}
\text{Subsets}  \\
\text{of } \Spc(\cat T^c)
\end{Bmatrix}.
\end{equation}
We say that $\cat T$ is \emph{stratified} (over $\Spc(\cat T^c)$) if the map \eqref{eq:lococlassification} is a bijection. We note that $\Supp$ is surjective if $\Spc(\cat T^c)$ is Noetherian, so stratification is really about injectivity. This observation leads to the following characterization \cite[Thm.~4.1]{bhs1}:

\begin{Prop}
A rigidly-compactly generated tt-category $\cat T$ is stratified if the following two conditions hold:
    \begin{enumerate}
        \item (Local-to-global) For every $t \in \cat T$, we have an equality of ideals
            \[ 
            \Loco{t} = \Loco{\Gamma_xt \mid x \in \Supp(t)}.
            \]
        \item (Minimality) For each $x \in \Spc(\cat T^c)$, the localizing tensor ideal $\Gamma_x\cat T$ does not contain any non-zero proper localizing subideals. 
    \end{enumerate}
\end{Prop}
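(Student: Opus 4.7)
The plan is to separate the bijection claim into surjectivity (which is essentially automatic under the Noetherian hypothesis on the spectrum) and injectivity (where both hypotheses are used). Throughout, I would use that the support $\Supp$ is compatible with triangles, coproducts, and tensor products, and that for $x \in \Spc(\cat T^c)$ one has $\Supp(\Gamma_x\unit) = \{x\}$ since the defining formula $\Gamma_x\unit = e_{\cat V}\otimes f_{\cat W}$ has support exactly $\cat V \cap \cat W^c = \{x\}$.

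For surjectivity, given $W \subseteq \Spc(\cat T^c)$, I would consider
\[
\cat L_W \coloneqq \SET{t \in \cat T}{\Supp(t)\subseteq W}.
\]
This is a localizing tensor ideal by the compatibilities of $\Supp$ recalled above, and $\Supp(\cat L_W) = W$ holds because $\Gamma_x\unit \in \cat L_W$ and $\Supp(\Gamma_x\unit) = \{x\}$ for each $x \in W$. Note this step uses neither \textup{(a)} nor \textup{(b)}; the real content of the proposition lies in injectivity.

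For injectivity, suppose $\cat L_1, \cat L_2$ are localizing tensor ideals with $\Supp(\cat L_1) = \Supp(\cat L_2) =: W$. The key intermediate claim is that $\Gamma_x\cat T \subseteq \cat L_i$ for every $x \in W$. To see this, observe that $\cat M_i \coloneqq \Gamma_x\cat T \cap \cat L_i$ is a localizing subideal of $\Gamma_x\cat T$: closure under coproducts and triangles is immediate, and closure under tensoring with $a \in \Gamma_x\cat T$ follows because $\cat L_i$ and $\Gamma_x\cat T$ are both tensor ideals of $\cat T$. Since $x \in \Supp(\cat L_i)$, there exists $l \in \cat L_i$ with $\Gamma_xl = l\otimes \Gamma_x\unit \neq 0$; this element lies in $\cat M_i$, so $\cat M_i$ is non-zero. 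Minimality~\textup{(b)} then forces $\cat M_i = \Gamma_x\cat T$, i.e.\ $\Gamma_x\cat T \subseteq \cat L_i$, as claimed.

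Now fix $t \in \cat L_1$. Since $\Supp(t) \subseteq W$, the previous step gives $\Gamma_xt \in \Gamma_x\cat T \subseteq \cat L_2$ for every $x \in \Supp(t)$. Applying the local-to-global condition~\textup{(a)} yields
\[
\Loco{t} = \Loco{\Gamma_xt \mid x \in \Supp(t)} \subseteq \cat L_2,
\]
so $t \in \cat L_2$. Symmetry gives $\cat L_1 = \cat L_2$, completing injectivity. The main conceptual step is recognising that $\Gamma_x\cat T \cap \cat L$ is a localizing subideal of $\Gamma_x\cat T$ on which minimality can be brought to bear; everything else is bookkeeping with the Balmer--Favi idempotents. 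The one subtlety worth flagging is the non-vanishing $\Supp(\Gamma_x\unit) = \{x\}$, which is what ensures that the local categories $\Gamma_x\cat T$ are non-zero at every $x$ and hence that minimality has content.
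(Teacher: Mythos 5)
Your argument is correct and is essentially the standard proof of this statement (the paper itself does not reprove it but cites \cite[Thm.~4.1]{bhs1}): surjectivity via the non-vanishing of the Balmer--Favi idempotents $\Gamma_x\unit$ under the (weakly) Noetherian hypothesis, and injectivity by applying minimality to the non-zero localizing tensor ideal $\Gamma_x\cat T \cap \cat L_i$ and then invoking the local-to-global principle. No gaps.
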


When $\Spc(\cat T^c)$ is Noetherian, then the local-to-global principle holds for $\cat T$ by \cite[Thm.~6.9]{Stevenson13} or \cite[Thm.~3.21]{bhs1}, so stratification reduces to the minimality of the tensor ideals $\Gamma_x\cat T$ for all $x \in \Spc(\cat T^c)$. There is a criterion for minimality of localizing tensor ideals, which is a mild variation of a result originally due to Benson, Iyengar, and Krause \cite[Lem.~3.9]{BensonIyengarKrause11a}:

\begin{Lem}\label{lem:minimality_crit}
Let $\cat T$ be a rigidly-compactly generated tt-category and $\cat L$ a nonzero localizing tensor ideal of $\cat T$. The following statements are equivalent:
	\begin{enumerate}
		\item The localizing tensor ideal $\cat L$ is minimal. 
		\item For all non-zero $t_1,t_2 \in \cat L$ there exists $z \in \cat T$ such that $\Hom_{\cat T}(t_1 \otimes z,t_2) \ne 0$. 
	\end{enumerate}
\end{Lem}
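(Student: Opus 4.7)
The plan is to handle both directions through the auxiliary ``null ideal''
\[
\cat N_{t_2} := \{w \in \cat T : \Hom_{\cat T}(w \otimes z, t_2) = 0 \text{ for all } z \in \cat T\}
\]
associated to a nonzero $t_2 \in \cat L$. The preliminary step is to verify that $\cat N_{t_2}$ is itself a localizing tensor ideal of $\cat T$: closure under triangles is the cohomological long exact Hom-sequence argument, together with the observation that the family of $z$'s is automatically shift-stable; closure under arbitrary coproducts follows from the identification $\Hom(\bigoplus_i w_i \otimes z, t_2) \cong \prod_i \Hom(w_i \otimes z, t_2)$, valid since $-\otimes z$ preserves coproducts; and closure under tensoring with $y \in \cat T$ follows by replacing $z$ with $y \otimes z$ via associativity.

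For the implication (a)$\,\Rightarrow\,$(b), fix nonzero $t_1, t_2 \in \cat L$ and suppose $\Hom(t_1 \otimes z, t_2) = 0$ for every $z \in \cat T$. Then $t_1$ lies in $\cat N_{t_2} \cap \cat L$, so the intersection is a nonzero localizing tensor subideal of $\cat L$; by minimality it equals $\cat L$, hence $t_2 \in \cat N_{t_2}$. Specializing to $z = \unit$ yields $\id_{t_2} = 0$, contradicting $t_2 \neq 0$.

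For the converse (b)$\,\Rightarrow\,$(a), I would reduce minimality to the assertion that $\Loco{t_1} = \cat L$ for every nonzero $t_1 \in \cat L$, since any nonzero localizing tensor subideal of $\cat L$ then contains such a $\Loco{t_1}$. The key observation is that $\Loco{t_1}$ coincides with the plain localizing subcategory generated by the essentially small set $\{t_1 \otimes c : c \in \cat T^c\}$, using $\cat T = \Ind(\cat T^c)$ and that tensoring commutes with colimits. Consequently $\Loco{t_1}$ is coreflective and fits into a Bousfield colocalization triangle
\[
\Gamma t_2 \longrightarrow t_2 \longrightarrow L t_2
\]
for every $t_2 \in \cat T$, with $\Gamma t_2 \in \Loco{t_1}$ and $L t_2 \in \Loco{t_1}^{\perp}$. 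When $t_2 \in \cat L$ one has $L t_2 \in \cat L$ as well; and if $L t_2 \neq 0$, hypothesis (b) would supply $z \in \cat T$ with $\Hom(t_1 \otimes z, L t_2) \neq 0$, contradicting $t_1 \otimes z \in \Loco{t_1}$ paired with $L t_2 \in \Loco{t_1}^{\perp}$. Hence $L t_2 = 0$ and $t_2 \in \Loco{t_1}$, as required.

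The principal technical obstacle is the appeal to Bousfield colocalization at $\Loco{t_1}$; this is not formal, but follows from Neeman's theorem that every set-generated localizing subcategory of a well-generated triangulated category is coreflective, which applies because $\cat T$ is rigidly-compactly generated. All other steps are routine manipulations with tensor ideals and long exact sequences.
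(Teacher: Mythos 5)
Your proof is correct and follows essentially the same route as the argument the paper defers to (Benson--Iyengar--Krause, Lem.~3.9, adapted to the tensor-ideal setting): the null ideal $\cat N_{t_2}$ for (a)$\Rightarrow$(b), and for (b)$\Rightarrow$(a) the observation that $\Loco{t_1}=\Loc(\{t_1\otimes c: c\in\cat T^c\})$ is set-generated, hence admits a colocalization triangle forcing $Lt_2=0$. All the supporting claims (shift-stability of the test objects $z$, the ideal property of $\cat N_{t_2}$, and the appeal to Neeman's coreflectivity theorem) are correctly justified.
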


An important consequence of stratification of a given $\cat T$ with Noetherian spectrum is the generalized telescope conjecture in $\cat T$; this result is due to Stevenson \cite[Thm.~7.15]{Stevenson13} and was extended to generically Noetherian spectra in \cite[Thm.~9.11]{bhs1}.

\begin{Prop}[Stevenson]\label{prop:gentelescope}
If $\cat T$ is stratified with Noetherian spectrum $\Spc(\cat T^c)$, then the telecope conjecture holds in $\cat T$, i.e., there are bijections
\[
\begin{Bmatrix}
\text{Smashing ideals} \\
\text{of } \cat T
\end{Bmatrix} 
\xymatrix@C=1.2pc{ \ar[r]_-{(-)^c}^-{\sim} &}
\begin{Bmatrix}
\text{Thick tensor ideals} \\
\text{of } \cat T^c
\end{Bmatrix} 
\xymatrix@C=2pc{ \ar[r]_-{\supp}^-{\sim} &}
\begin{Bmatrix}
\text{Specialization closed}  \\
\text{subsets of } \Spc(\cat T^c)
\end{Bmatrix}.
\]
\end{Prop}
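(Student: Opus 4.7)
The plan is to establish the two claimed bijections separately. The right-hand bijection, between thick tensor ideals of $\cat T^c$ and specialization closed subsets of $\Spc(\cat T^c)$, is immediate from Balmer's fundamental theorem recalled in \eqref{eq:balmerclassification}, using that on a Noetherian spectral space the Thomason and specialization closed subsets coincide.

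For the left-hand bijection, the plan is first to construct a candidate inverse to $(-)^c$. Given a thick tensor ideal $\cat J \subseteq \cat T^c$, set $\cat Y = \supp(\cat J) \subseteq \Spc(\cat T^c)$ (a Thomason subset) and consider the Balmer--Favi idempotent triangle $e_{\cat Y} \to \unit \to f_{\cat Y}$ recalled in \cref{ssec:stratificationbasics}. Because $f_{\cat Y}$ defines a smashing Bousfield localization, the associated kernel $\cat L_{\cat Y} := \Img(e_{\cat Y}\otimes -)$ is a smashing tensor ideal of $\cat T$. Applying Neeman's Thomason localization theorem, which is available because $\cat T$ is rigidly-compactly generated, one obtains $(\cat L_{\cat Y})^c = \cat J$. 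This shows that one composite is the identity, and in particular that $(-)^c$ is surjective.

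The substantive content is to show that, conversely, every smashing ideal $\cat L$ satisfies $\cat L = \Loc\langle \cat L^c\rangle$, which is exactly injectivity of $(-)^c$. Under the stratification hypothesis, by the characterization of stratification in terms of local-to-global and minimality, this reduces to the equality of supports $\Supp(\cat L) = \supp(\cat L^c)$. The inclusion $\supseteq$ is formal. For the reverse inclusion, given $x \in \Supp(\cat L)$, I would first use the minimality clause of stratification, together with \cref{lem:minimality_crit}, to deduce that $\Gamma_x\unit \in \cat L$. The Noetherianity of $\Spc(\cat T^c)$ then allows one to present the smashing idempotent $e_{\cat L}$ attached to $\cat L$ as a filtered colimit of Balmer--Favi idempotents $e_{\cat V}$ associated to Thomason subsets $\cat V \subseteq \Supp(\cat L)$. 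A compact approximation argument applied to such an $e_{\cat V}$ produces an object of $\cat L^c$ whose support contains $x$, giving $x \in \supp(\cat L^c)$ as required.

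The main obstacle is precisely this final extraction step: locating a compact object of $\cat L$ with a prescribed point of $\Supp(\cat L)$ in its support. This is the technical core of Stevenson's original argument in \cite[Thm.~7.15]{Stevenson13} and its generalization in \cite[Thm.~9.11]{bhs1}, and is the stage at which Noetherianity of the Balmer spectrum and rigid-compact generation of $\cat T$ enter in an essential way. Combining this with the section $\cat J \mapsto \cat L_{\cat Y}$ constructed above yields the left bijection and completes the proof.
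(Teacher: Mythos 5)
The paper offers no proof of this proposition at all --- it is quoted from Stevenson \cite[Thm.~7.15]{Stevenson13} and \cite[Thm.~9.11]{bhs1} --- so there is nothing internal to compare against; I can only assess your sketch on its own terms. Your skeleton is correct and consistent with those references: the right-hand bijection is indeed \eqref{eq:balmerclassification} plus the Noetherian identification of Thomason with specialization-closed subsets; the section $\cat J \mapsto \Loco{\cat J} = \Img(e_{\supp(\cat J)}\otimes -)$ together with Neeman--Thomason gives surjectivity of $(-)^c$; and injectivity correctly reduces, via stratification, to the support equality $\Supp(\cat L)=\supp(\cat L^c)$, of which only one inclusion is nontrivial. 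The step ``minimality gives $\Gamma_x\unit\in\cat L$'' is also fine (and does not even need \cref{lem:minimality_crit}). The one place I would push back is your description of the remaining core: presenting the smashing idempotent $e_{\cat L}$ as a filtered colimit of Balmer--Favi idempotents $e_{\cat V}$ with $\cat V\subseteq\Supp(\cat L)$ Thomason is essentially equivalent to what you are trying to prove (that $\cat L$ is a finite localization kernel), so as written that sentence is circular rather than a reduction; the actual content in Stevenson's argument is to show that $\Supp(\cat L)$ is specialization closed for a \emph{smashing} $\cat L$, which is the only point where smashingness is used beyond formalities. Since you explicitly defer this to \cite{Stevenson13} and \cite{bhs1}, your proposal is an honest and correctly structured outline rather than a complete proof --- which puts it on the same footing as the paper's own treatment, with a correct reduction skeleton added around the citation.
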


A further consequence of stratification is that the universal notion of support satisfies the tensor product formula, see \cite[Thm.~8.2]{bhs1}. This terminology is inspired by the example of modular representation theory.

\begin{Prop}\label{prop:tensorproductformula}
If $\cat T$ is stratified with Noetherian spectrum $\Spc(\cat T^c)$, then for any two $t_1,t_2 \in \cat T$ we have an equality $\Supp(t_1 \otimes t_2) = \Supp(t_1) \cap \Supp(t_2)$.
\end{Prop}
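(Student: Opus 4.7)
The plan is to establish the two inclusions separately, treating $\supseteq$ as the substantive direction. First, recall that for any $x\in\Spc(\cat T^c)$ the tensor idempotent $\Gamma_x\unit$ satisfies $\Gamma_x\unit\otimes\Gamma_x\unit\simeq\Gamma_x\unit$, which gives the symmetric identifications
\[
\Gamma_x(t_1\otimes t_2)\simeq \Gamma_x\unit\otimes t_1\otimes t_2\simeq \Gamma_xt_1\otimes t_2\simeq t_1\otimes \Gamma_xt_2\simeq \Gamma_xt_1\otimes \Gamma_xt_2.
\]
Hence if $x\notin\Supp(t_1)$, so $\Gamma_xt_1=0$, then $\Gamma_x(t_1\otimes t_2)=0$, giving the inclusion $\Supp(t_1\otimes t_2)\subseteq \Supp(t_1)\cap\Supp(t_2)$ for free; no stratification hypothesis is needed here.

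For the reverse inclusion, fix $x\in \Supp(t_1)\cap \Supp(t_2)$, so $\Gamma_xt_1\ne 0$ and $\Gamma_xt_2\ne 0$. By the identifications above, it suffices to prove that $\Gamma_xt_1\otimes \Gamma_xt_2\neq 0$. The key observation is that $\Gamma_x\cat T$ is itself a localizing tensor ideal of $\cat T$ (being the essential image of the smashing operation $-\otimes \Gamma_x\unit$), and both $\Gamma_xt_1$ and $\Gamma_xt_2$ lie in it. Stratification of $\cat T$ over the Noetherian spectrum $\Spc(\cat T^c)$ then supplies minimality of $\Gamma_x\cat T$: any non-zero localizing tensor ideal contained in $\Gamma_x\cat T$ equals $\Gamma_x\cat T$. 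In particular, $\Loco{\Gamma_xt_2}=\Gamma_x\cat T$.

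I then argue by contradiction. Suppose $\Gamma_xt_1\otimes \Gamma_xt_2=0$. The full subcategory
\[
\cat I=\SET{s\in\cat T}{\Gamma_xt_1\otimes s=0}
\]
is a localizing tensor ideal of $\cat T$ that contains $\Gamma_xt_2$, hence contains $\Loco{\Gamma_xt_2}=\Gamma_x\cat T$. Consequently $\Gamma_xt_1\otimes z=0$ for every $z\in \Gamma_x\cat T$. Since any $z\in\cat T$ satisfies $\Gamma_xt_1\otimes z\simeq \Gamma_xt_1\otimes \Gamma_xz$ with $\Gamma_xz\in \Gamma_x\cat T$, we in fact obtain $\Gamma_xt_1\otimes z=0$ for all $z\in\cat T$, and thus $\Hom_{\cat T}(\Gamma_xt_1\otimes z,\Gamma_xt_2)=0$ for all $z\in\cat T$. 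This directly contradicts the minimality criterion \cref{lem:minimality_crit} applied to the non-zero pair $\Gamma_xt_1,\Gamma_xt_2\in\Gamma_x\cat T$. Hence $\Gamma_xt_1\otimes\Gamma_xt_2\ne 0$, which by the initial identifications means $x\in \Supp(t_1\otimes t_2)$, completing the proof.

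The only delicate point is the reduction from an arbitrary test object $z\in\cat T$ in \cref{lem:minimality_crit} to a test object living inside $\Gamma_x\cat T$; this is handled cleanly by exploiting smashingness of $\Gamma_x$, i.e.\ the fact that $-\otimes\Gamma_x\unit$ is a coproduct-preserving localization, so no additional hypothesis beyond minimality of $\Gamma_x\cat T$ is required.
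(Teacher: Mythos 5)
Your argument is correct: the idempotence identity $\Gamma_x(t_1\otimes t_2)\simeq\Gamma_xt_1\otimes\Gamma_xt_2$ gives one inclusion for free, and the reverse inclusion follows from minimality of $\Gamma_x\cat T$ exactly as you describe (the passage from test objects in $\Gamma_x\cat T$ to all of $\cat T$ via smashingness is handled properly). The paper itself offers no proof and simply cites \cite[Thm.~8.2]{bhs1}, whose argument is essentially the same minimality-based one you give, so there is nothing further to compare.
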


\begin{Rem}\label{rem:weaklynoetherian}
With the exception of \cref{prop:gentelescope}, the results in this subsection hold more generally for any rigidly-compactly generated tt-category with weakly Noetherian Balmer spectrum. A spectral space is called weakly Noetherian if every singleton can be written as an intersection of a Thomason subset and the complement of a Thomason subset. We refer to \cite{bhs1} for the details. 
\end{Rem}

\subsection{Descent for stratification}\label{ssec:stratificationdescent}

Stratification of tt-categories has a number of strong permanence properties which we will present in this section. The first is taken from \cite{bhs1}, while the second and third are new to this paper. A more comprehensive treatment will be given in forthcoming joint work with  Castellana, Heard, Naumann, Pol, and Sanders. 

\subsubsection{Zariski descent}

Let $V$ be the complement of a Thomason subset $\cat Y \subseteq \Spc(\cat T^c)$. Then there exists a finite localization 
\[
\xymatrix{\cat T \ar[r] & \cat T(V) = \cat T/\Loco{\SET{t \in \cat T^c}{\supp(t) \subseteq \cat Y}}}
\]
which on spectra induces the inclusion $V \hookrightarrow \Spc(\cat T^c)$. In \cite[Cor.~5.5]{bhs1}, we have established the following form of Zariski descent for stratification:

\begin{Thm}\label{thm:zariskidescent}
Let $\Spc(\cat T^c) = \bigcup_{i\in i}V_i$ be a cover by complements of Thomason subsets, then $\cat T$ is stratified if and only if $\cat T(V_i)$ is stratified for all $i\in I$.
\end{Thm}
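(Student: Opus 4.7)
Since $\Spc(\cat T^c)$ is Noetherian, every subspace $V_i$ is Noetherian as well, so the local-to-global principle holds automatically for $\cat T$ and for each $\cat T(V_i)$. By the characterization of stratification recalled above, each of these categories is stratified if and only if the localizing tensor ideal $\Gamma_x\cat T$ (respectively $\Gamma_x\cat T(V_i)$) is minimal at every point of the corresponding Balmer spectrum. I therefore plan to reduce the theorem to a point-by-point comparison of these minimality conditions across the finite localizations $L_i\colon \cat T \to \cat T(V_i)$.

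The key technical input is a compatibility of Balmer--Favi idempotents under $L_i$. For $x \in V_i$, I would show that $L_i(\Gamma_x^{\cat T}\unit)$ agrees with the idempotent $\Gamma_x^{\cat T(V_i)}\unit$ and, more substantially, that $L_i$ restricts to an equivalence of localizing tensor ideals $\Gamma_x^{\cat T}\cat T \simeq \Gamma_x^{\cat T(V_i)}\cat T(V_i)$ that intertwines the ambient $\cat T$- and $\cat T(V_i)$-tensor actions via $L_i$. Concretely, write $\{x\} = \cat V \cap \cat W^c$ for Thomason subsets $\cat V, \cat W \subseteq \Spc(\cat T^c)$; since $V_i$ is open in a Noetherian space, the intersections $\cat V \cap V_i$ and $\cat W \cap V_i$ remain Thomason in $V_i$ and realize $\{x\}$ as a Thomason intersection there. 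The idempotent triangles $e_{\cat V}\to\unit\to f_{\cat W}$ defining $\Gamma_x^{\cat T}\unit$ are then carried by $L_i$ to the analogous triangles computing $\Gamma_x^{\cat T(V_i)}\unit$, because the relevant vanishing subcategories in $\cat T(V_i)^c$ are precisely the images of those in $\cat T^c$.

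Once this compatibility is in place, both implications follow symmetrically. If $\cat T$ is stratified, then for each $x \in V_i$ the minimality of $\Gamma_x\cat T$ transports through the equivalence above to minimality of $\Gamma_x\cat T(V_i)$, so each $\cat T(V_i)$ is stratified. Conversely, every $x \in \Spc(\cat T^c)$ lies in some $V_i$ by hypothesis on the cover, and stratification of $\cat T(V_i)$ delivers minimality of $\Gamma_x\cat T(V_i)$, which transports back to minimality of $\Gamma_x\cat T$; thus $\cat T$ is stratified. The main obstacle in this strategy is the Balmer--Favi compatibility under $L_i$: one needs to verify that the defining Thomason subsets can be chosen inside $V_i$ and that the resulting idempotents and localizing subideals transfer cleanly in both directions (for instance, via the minimality criterion of the preceding lemma, which can be checked on generators). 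Everything else is formal bookkeeping about tensor ideals and finite localizations.
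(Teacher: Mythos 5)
Your proof is correct and follows essentially the same route as the source the paper cites for this statement, \cite[Cor.~5.5]{bhs1} (the paper itself only quotes the result without reproving it): the local-to-global principle holds by Noetherianity, and minimality at $x$ transfers along the finite localization $L_i$ because $L_i$ restricts to a tensor-compatible equivalence $\Gamma_x\cat T \simeq \Gamma_x\cat T(V_i)$ of the relevant minimal ideals, with the minimality criterion of \cref{lem:minimality_crit} making the two-way transfer immediate. One cosmetic point: $V_i$, being the complement of a Thomason subset, need not be \emph{open}, but your conclusion that $\cat V\cap V_i$ and $\cat W\cap V_i$ are Thomason in $V_i$ stands, since in a Noetherian spectral space Thomason equals specialization-closed and this passes to the (automatically Noetherian) subspace $V_i$.
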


\begin{Cor}\label{cor:stratfiniteproduct}
Let $(\cat T_i)_{i \in I}$ be a finite collection of rigidly-compactly generated tt-categories, then $\prod_i\cat T_i$ is stratified if and only if the tt-categories $\cat T_i$ are stratified for all $i$.
\end{Cor}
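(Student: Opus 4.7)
The plan is to deduce this directly from Zariski descent (\cref{thm:zariskidescent}), after observing that a finite product of tt-categories corresponds geometrically to a disjoint union decomposition of the Balmer spectrum, where each factor is cut out by a clopen.

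First, I would recall that for a finite family $(\cat T_i)_{i\in I}$ of rigidly-compactly generated tt-categories, the product $\cat T = \prod_{i\in I}\cat T_i$ is again rigidly-compactly generated, with compact objects given by the (finite) product $\cat T^c = \prod_{i\in I}\cat T_i^c$. Under the assumption that $I$ is finite, the Balmer spectrum decomposes as
\[
\Spc(\cat T^c) \;\cong\; \bigsqcup_{i\in I}\Spc(\cat T_i^c),
\]
with each summand $V_i := \Spc(\cat T_i^c)$ sitting inside as a clopen subset. This is where finiteness of $I$ is used: in a spectral space every clopen subset is quasi-compact, so each $V_i$ is simultaneously a Thomason subset and the complement of the Thomason subset $\bigsqcup_{j\neq i}V_j$. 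In particular, $\{V_i\}_{i\in I}$ is a (finite) cover of $\Spc(\cat T^c)$ by complements of Thomason subsets, so it is eligible input for \cref{thm:zariskidescent}.

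Next, I would identify the finite localization $\cat T(V_i)$ with the factor $\cat T_i$. The thick tensor ideal of $\cat T^c$ corresponding to $\bigsqcup_{j\neq i}V_j$ consists precisely of tuples supported off $V_i$, i.e.\ tuples with $i$-th component zero; localizing $\cat T$ at this ideal kills every factor except the $i$-th, and the projection $\cat T \to \cat T_i$ exhibits $\cat T_i$ as the resulting Verdier quotient (equivalently, as the full subcategory of objects supported on $V_i$). Thus $\cat T(V_i)\simeq \cat T_i$ as tt-categories.

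Having established these two points, \cref{thm:zariskidescent} applies verbatim: $\cat T = \prod_i\cat T_i$ is stratified if and only if each $\cat T(V_i)\simeq \cat T_i$ is stratified. The only genuinely nontrivial input is the underlying Zariski descent theorem itself; the rest is bookkeeping about spectra of finite products. The one mild subtlety worth double-checking is that finiteness of $I$ really is needed, since for an infinite product neither the spectral space nor the tt-category is generally a disjoint union in the appropriate sense (the complement of a single $V_i$ need no longer be quasi-compact), so one cannot expect the statement to hold without the finiteness assumption.
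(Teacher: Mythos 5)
Your argument is correct and is essentially the paper's own proof: the paper likewise decomposes $\Spc(\prod_i \cat T_i^c)$ as the finite disjoint union $\coprod_i \Spc(\cat T_i^c)$, identifies each projection $\prod_i\cat T_i \to \cat T_j$ with the finite localization at the Thomason subset $\coprod_{i\neq j}\Spc(\cat T_i^c)$, and invokes Zariski descent (\cref{thm:zariskidescent}). Your additional remarks on why finiteness of $I$ is needed are accurate but do not change the substance of the argument.
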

\begin{proof}
This can be checked by hand; alternatively, we may argue as follows: Since the collection is finite, the projection functors induce a homeomorphism of Balmer spectra
\[
\textstyle{\coprod_{i\in I} \Spc(\cat T_i^c) \cong \Spc(\prod_{i \in I}\cat T_i^c).}
\]
In particular, for any $j\in I$, the projection functor $\prod_{i \in I}\cat T_i \to \cat T_j$ identifies with the finite localization corresponding to the specialization closed subset $\coprod_{i \neq j} \Spc(\cat T_i^c) \subseteq \Spc(\prod_{i \in I}\cat T_i^c)$. The claim therefore follows from Zariski descent, \cref{thm:zariskidescent}.
\end{proof}

\subsubsection{\'Etale descent}

Stratification of tt-categories also satisfies a version of \'etale descent, as is proven in \cite[Thm.~6.4]{bhs1}. Here, we prove a strengthening of this result which removes the hypothesis on the fiber. In order to state it, we recall that a tt-functor $f^*\colon \cat S \to \cat T$ between rigidly compactly generated tt-categories is said to be finite \'etale if it satisfies the following properties:
    \begin{enumerate}
        \item $f^*$ is cocontinuous, i.e., preserves set-indexed coproducts;
        \item there exists a compact commutative separable algebra object $\cat A \in \cat T$ of finite degree (\cite{Balmer14}) and an equivalence of tt-categories $\psi\colon\cat T \simeq \Mod_{\cat S}(\cat A)$ such that the following diagram commutes:
            \[
            \xymatrix{\cat S \ar[r]^-{f^*} \ar[rd]_{-\otimes A} & \cat T \ar[d]_{\wr}^{\psi} \\
            & \Mod_{\cat S}(\cat A).}
            \]
    \end{enumerate}
In particular, this implies that the right adjoint $f_*$ to $f^*$ is conservative. Our \'etale descent theorem for stratification is:

\begin{Thm}\label{thm:etaledescent}
Let $f^*\colon \cat S \to \cat T$ be a conservative finite \'etale map of rigidly-compactly generated tt-categories. If $\cat T$ is stratified, then so is $\cat S$.
\end{Thm}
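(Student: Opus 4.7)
The plan is to verify the two standard criteria for stratification of $\cat S$: the local-to-global principle and minimality of $\Gamma_y\cat S$ at every $y \in \Spc(\cat S^c)$. The former is almost formal: since $\cat T$ is stratified, $\Spc(\cat T^c)$ is Noetherian, and conservativity of $f^*$ makes $\varphi = \Spc(f^*)\colon\Spc(\cat T^c)\to\Spc(\cat S^c)$ a surjective spectral map. Hence $\Spc(\cat S^c)$ is Noetherian as a continuous surjective image, and local-to-global holds automatically by Stevenson's theorem, so only minimality remains.

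The main tool is a descent statement for localizing tensor ideals: for any localizing tensor ideal $\cat L \subseteq \cat S$, one has
\[
\cat L \;=\; \SET{s \in \cat S}{f^*s \in \Loco{f^*\cat L}_{\cat T}}.
\]
The nontrivial direction uses the projection formula together with cocontinuity of $f_*$ (both available for finite \'etale $f^*$, cf.\ \cite{Balmer14}) to conclude that $s \otimes \cat A = f_*f^*s \in \cat L$, after which separability of $\cat A = f_*\unit_{\cat T}$ provides a splitting $\unit_{\cat S} \to \cat A \to \unit_{\cat S}$, exhibiting $s$ as a retract of $s \otimes \cat A \in \cat L$. Combined with stratification of $\cat T$ and the induced support classification of localizing tensor ideals, this descent reduces minimality of $\Gamma_y\cat S$ to the following statement: for all nonzero $s_1,s_2 \in \Gamma_y\cat S$, one has a support inclusion $\Supp(f^*s_2)\subseteq \Supp(f^*s_1)$ inside the fiber $\varphi^{-1}(y)$, which is finite by \cite{Balmer16b}.

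The main obstacle is precisely verifying this support inclusion, as a priori $\Supp(f^*s_1)$ and $\Supp(f^*s_2)$ could be incomparable subsets of $\varphi^{-1}(y)$. I would overcome this by enlarging $\cat A$ to a Galois envelope $\tilde{\cat A}$, yielding a refined conservative finite \'etale functor $\tilde f^*\colon\cat S \to \tilde{\cat T} = \Mod_{\cat S}(\tilde{\cat A})$ which is Galois with a finite group $\Sigma$ and for which $\tilde\varphi^{-1}(y)$ is a single $\Sigma$-orbit. Since $\tilde f^*s$ carries canonical descent data for any $s \in \cat S$, its support is a $\Sigma$-stable subset of $\tilde\varphi^{-1}(y)$, hence either empty or the entire orbit; conservativity selects the latter, forcing $\Supp(\tilde f^* s_i) = \tilde\varphi^{-1}(y)$ for $i=1,2$. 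Rerunning the descent argument through $\tilde f^*$ (which inherits both conservativity and finite \'etaleness) then produces the required containment, \emph{provided} $\tilde{\cat T}$ itself is stratified. This last point is a ``forward'' transfer of stratification along the finite \'etale extension $\cat T\to\tilde{\cat T}$ whose fiber has been normalized to a single Galois orbit, and can be handled by the precursor \cite[Thm.~6.4]{bhs1} whose original fiber hypothesis is now met.
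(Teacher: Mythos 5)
There are two genuine gaps. First, your descent statement for localizing tensor ideals is justified by a false step: separability of $\cat A = f_*\unit_{\cat T}$ does \emph{not} provide a splitting $\unit_{\cat S}\to\cat A\to\unit_{\cat S}$. Separability splits the multiplication $\cat A\otimes\cat A\to\cat A$ as a bimodule map, not the unit; the composite of the unit with the trace is multiplication by the degree, which can vanish even when $f^*$ is conservative (e.g.\ for restriction $\StMod(G,k)\to\StMod(H,k)$ the composite $k\to k[G/H]\to k$ is multiplication by $[G:H]\equiv 0 \bmod p$). So $s$ need not be a retract of $s\otimes\cat A$. The statement you want is still true, but the correct mechanism is support-theoretic: $s\in\Loco{s\otimes f_*\unit}$ holds by \cite[Lem.~3.7]{bhs1} whenever $\Supp(s)\subseteq\supp(f_*\unit)$, and the latter follows from surjectivity of $\varphi$. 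This is exactly how the paper closes the loop (in a localized setting, with $e_{\{\frak m\}}$ in place of $\unit$, after first reducing to the case where the point in question is closed via a finite localization --- a reduction your sketch also omits, and which is needed to know the fiber consists of Thomason points).

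Second, and more seriously, your route to the support comparison inside the fiber rests on a Galois envelope $\tilde{\cat A}$ whose existence and properties (conservative, finite \'etale, Galois with single-orbit fibers over $y$, with $\tilde{\cat T}$ stratified) are not established; there is no general construction of such an envelope for a separable algebra in a tt-category, and your appeal to \cite[Thm.~6.4]{bhs1} for the ``forward'' transfer $\cat T\Rightarrow\tilde{\cat T}$ goes in the wrong direction (that theorem descends stratification along a finite \'etale functor; it does not ascend it). The detour is also unnecessary: the key fact is that if $\Supp(s)=\{\frak m\}$ for $\frak m$ closed, then $\Supp(f^*s)$ is the \emph{entire} fiber $\varphi^{-1}(\frak m)$, and this admits a direct proof (\cref{lem:indfullsupp}): each $\cat P\in\varphi^{-1}(\frak m)$ is Thomason, so if $\cat P\notin\Supp(f^*s)$ then $\kappa(\cat P)\otimes f^*s=0$, hence $f_*(\kappa(\cat P))\otimes s=0$ by the projection formula; the Balmer--Favi half-tensor-product theorem together with $\supp(f_*\kappa(\cat P))=\{\frak m\}$ (\cref{lem:ressupp}) then contradicts $\frak m\in\Supp(s)$. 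With full-fiber support in hand, minimality of $\cat T$ at each fiber point plus the local-to-global principle gives $f^*e_{\{\frak m\}}\in\Loco{f^*s}$, and the first paragraph's mechanism pushes this back down to $e_{\{\frak m\}}\in\Loco{s}$. Your overall architecture (local-to-global is formal; reduce to minimality; detect it after $f^*$) matches the paper, but both pivotal steps need to be repaired as above.
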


In order to prepare for the proof of this theorem, we start with a point-set topological observation.

\begin{Lem}\label{lem:closedfiber}
Let $f^*\colon \cat S \to \cat T$ be a finite \'etale extension of rigidly-compactly tt-categories and write $\varphi\colon \Spc(\cat T^c) \to \Spc(\cat S^c)$ for the induced map on spectra. If $\cat S$ is local with closed point $\frak m$, then the fiber $\varphi^{-1}(\frak m) = \{\cat P_1,\ldots, \cat P_d\}$ is finite discrete and consists of Thomason points in $\Spc(\cat T^c)$.
\end{Lem}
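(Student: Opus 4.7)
Since $f^*\colon\cat S\to\cat T$ is finite étale, up to equivalence it is of the form $-\otimes\cat A\colon\cat S\to\Mod_{\cat S}(\cat A)$ for a compact commutative separable algebra $\cat A$ of finite degree $d$ in the sense of \cite{Balmer14}. The map $\varphi$ is therefore the map on Balmer spectra induced by extension of scalars along $\cat A$, and I would exploit the fact that its fibers are controlled by Balmer's theory of separable algebras \cite{Balmer16b}.

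\emph{Step 1 (Finiteness).} Invoke Balmer's degree bound: for a commutative separable algebra $\cat A$ of finite degree $d$, the fiber of $\varphi$ over any point of $\Spc(\cat S^c)$ has cardinality at most $d$. In particular, $\varphi^{-1}(\frak m)$ is a finite set $\{\cat P_1,\ldots,\cat P_k\}$ with $k\le d$.

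\emph{Step 2 (Discreteness).} This is the main obstacle and the place where separability (as opposed to mere finiteness) is crucial. Suppose $\cat P_i\rightsquigarrow\cat P_j$ is a specialization between two points lying over $\frak m$. I would base change along the localization of $\cat S$ at $\frak m$ (which, since $\cat S$ is already local with closed point $\frak m$, changes nothing) and reduce modulo $\frak m$, passing to the tt-residue ``field'' at $\frak m$ in the sense of Balmer. Separability is preserved under arbitrary base change, so the image of $\cat A$ becomes a separable algebra of finite degree over a tt-category with no nontrivial thick tensor ideals. Balmer's splitting theorem \cite{Balmer16b} then forces this residue algebra to decompose as a finite product of ``minimal'' separable pieces, whose spectrum is a finite discrete set corresponding bijectively to $\varphi^{-1}(\frak m)$. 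This rules out nontrivial specialization relations among the $\cat P_i$.

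\emph{Step 3 (Thomason points).} Since $\cat S$ is local with closed point $\frak m$, the singleton $\{\frak m\}$ is closed in $\Spc(\cat S^c)$. By continuity of $\varphi$, the preimage $\varphi^{-1}(\frak m)$ is closed in $\Spc(\cat T^c)$. Combined with the discreteness established in Step 2, each $\cat P_i$ is closed inside this closed fiber, hence closed in all of $\Spc(\cat T^c)$. Under the standing Noetherian assumption on Balmer spectra, Thomason subsets coincide with specialization closed subsets, so a closed point is precisely a Thomason point, as required. Alternatively, any specialization $\cat P_i\rightsquigarrow\cat Q$ in $\Spc(\cat T^c)$ maps under $\varphi$ to a specialization of $\frak m$, which must be $\frak m$ itself; hence $\cat Q\in\varphi^{-1}(\frak m)$, and discreteness forces $\cat Q=\cat P_i$, confirming that $\cat P_i$ is closed.

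The substantive content is concentrated in Step 2: finiteness and the Thomason conclusion are formal once one has discreteness of fibers, but discreteness genuinely uses separability of $\cat A$ and a residue-field argument à la Balmer, mirroring the classical fact from algebraic geometry that a finite étale morphism has geometrically discrete fibers because the fiber algebras are products of separable field extensions.
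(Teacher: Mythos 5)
Your overall skeleton is right, and Steps 1 and 3 are essentially what the paper does: finiteness of the fiber is quoted from Balmer's degree theory, and your second argument in Step 3 (a specialization $\cat P_i \rightsquigarrow \cat Q$ maps to a specialization of the closed point $\frak m$, hence lands back in the fiber, hence equals $\cat P_i$ by discreteness; then closed $=$ Thomason under the standing Noetherian hypothesis) is word-for-word the paper's argument.

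The problem is Step 2, which you yourself identify as carrying the substantive content. The reduction you propose --- ``passing to the tt-residue field at $\frak m$ in the sense of Balmer'' and then splitting the separable algebra over it --- is not an available move: the existence of residue-field functors at a prime of a tt-category is a well-known open problem, and Balmer's splitting results for separable algebras are not proved by base change to such residue objects. So, as written, your justification of discreteness does not go through. The fix is simply that discreteness (together with finiteness) of the fibers of $\varphi$ for a finite-degree separable algebra is itself a theorem of Balmer, namely \cite[Thm.~1.5(d)]{Balmer16b}, and this is exactly what the paper cites; its proof uses the splitting tower and degree function of \cite{Balmer14}, not residue fields. Once you replace your Step 2 by that citation (on a par with how you already cite the degree bound in Step 1), the rest of your argument is correct and coincides with the paper's.
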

\begin{proof}
Since $\Spc(\cat T^c)$ is Noetherian, a point in it is closed if and only if the corresponding singleton is Thomason. By \cite[Thm.~1.5(d)]{Balmer16b}, the fibers of $\varphi$ are finite and discrete. Consider $\cat P \in \varphi^{-1}(\frak m)$ and $\cat Q \subseteq \cat P$. Since $\varphi$ is inclusion preserving and $\frak m$ is closed, $\varphi(\cat Q) \subseteq \frak m$ is an equality, so $\cat Q = \cat P$ by discreteness of the fiber. It follows that all points in $\varphi^{-1}(\frak m)$ are closed.
\end{proof}

\begin{Not}
Consider a tt-category  $\cat K$ with Noetherian Balmer spectrum. If $\cat X$ is a subset of $\Spc(\cat K)$ with $\cat X = \cat V \cap \cat W^c$ for Thomason subsets $\cat V, \cat W \subseteq \Spc(\cat K)$, then we set 
\[
\kappaaux_{\cat K}(\cat X) = e_{\cat V} \otimes f_{\cat W}.
\]
If $\cat K$ arises as the full subcategory of compact objects in a rigidly-compactly tt-category $\cat T^c$, we will also use the subscript $\cat T$ in place of $\cat T^c$ on $g$. For a singleton $\cat Y = \{x\}$, we recover the previous construction $\kappaaux_{\cat K}(x) := \kappaaux_{\cat K}(\{x\}) \simeq \Gamma_x\unit$. 
\end{Not}

\begin{Lem}\label{lem:ressupp}
Suppose $f^*\colon \cat S \to \cat T$ is a tt-functor which admits a conservative right adjoint $f_*$, and write $\varphi\colon \Spc(\cat T^c) \to \Spc(\cat S^c)$ for the map induced by $f^*$ on spectra. For any $t \in \cat T$, there is an equality $\Supp(f_*(t)) = \varphi(\Supp(t))$.
\end{Lem}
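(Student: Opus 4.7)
The plan is to combine the projection formula, compatibility of the Balmer--Favi idempotents with $f^*$, and the local-to-global principle (available since $\Spc(\cat T^c)$ is Noetherian by our standing assumption). The strategy is to test membership $y \in \Supp(f_*(t))$ by tensoring with $\Gamma_y\unit$ and transport the question across the adjunction $f^*\adj f_*$.

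First I would show that for each $y \in \Spc(\cat S^c)$ there is an equivalence
\[
f^*(\Gamma_y\unit) \simeq \kappaaux_{\cat T}(\varphi^{-1}(y)).
\]
Pick a Thomason presentation $\{y\} = \cat V \cap \cat W^c$ in $\Spc(\cat S^c)$; since $\varphi$ is spectral, $\varphi^{-1}(y) = \varphi^{-1}(\cat V) \cap \varphi^{-1}(\cat W)^c$ has the same shape in $\Spc(\cat T^c)$. Because $f^*$ is a cocontinuous tt-functor and the idempotents $e_{\cat V}, f_{\cat W}$ arise from the finite localization at $\supp^{-1}(\cat V) \subseteq \cat S^c$, one checks on compact generators that $f^*(e_{\cat V}) \simeq e_{\varphi^{-1}(\cat V)}$ and $f^*(f_{\cat W}) \simeq f_{\varphi^{-1}(\cat W)}$, and tensoring yields the claim.

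Next, the projection formula gives
\[
\Gamma_y f_*(t) \;=\; f_*(t) \otimes \Gamma_y\unit \;\simeq\; f_*\bigl(t \otimes f^*\Gamma_y\unit\bigr) \;\simeq\; f_*\bigl(t \otimes \kappaaux_{\cat T}(\varphi^{-1}(y))\bigr).
\]
By conservativity of $f_*$, this object vanishes if and only if $t \otimes \kappaaux_{\cat T}(\varphi^{-1}(y))$ does. Tensoring further with $\Gamma_z\unit$ for $z \in \Spc(\cat T^c)$ recovers $\Gamma_z t$ when $z \in \varphi^{-1}(y)$ and $0$ otherwise, so
\[
\Supp\bigl(t \otimes \kappaaux_{\cat T}(\varphi^{-1}(y))\bigr) = \varphi^{-1}(y) \cap \Supp(t).
\]
An application of the local-to-global principle to detect vanishing then yields
\[
y \in \Supp(f_*(t)) \iff \varphi^{-1}(y) \cap \Supp(t) \neq \emptyset \iff y \in \varphi(\Supp(t)),
\]
as desired.

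The main obstacle is the compatibility of $f^*$ with the Balmer--Favi construction: both the identification $f^*\Gamma_y\unit \simeq \kappaaux_{\cat T}(\varphi^{-1}(y))$ and the projection formula applied to the generally non-compact object $\Gamma_y\unit$. The first point rests on the fact that the Thomason idempotents are determined by their restriction to compacts, which a cocontinuous tt-functor respects; the second is a standard feature of cocontinuous tt-functors between rigidly-compactly generated tt-categories, implicit in the setting of the paper. Once these are in hand, the argument reduces to bookkeeping with adjunction and supports.
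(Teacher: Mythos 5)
Your argument is correct and follows essentially the same route as the paper: identify $f^*(\Gamma_y\unit)$ with $\kappaaux_{\cat T}(\varphi^{-1}(y))$ (the paper cites \cite[Thm.~6.3]{BalmerFavi11} for the compatibility of the idempotents with $f^*$), apply the projection formula, and use conservativity of $f_*$ together with detection of vanishing by support. Your final step merely makes explicit the local-to-global/support computation that the paper compresses into the phrase ``in support-theoretic terms.''
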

\begin{proof}
Consider a prime $\cat Q \in \Spc(\cat S^c)$. Choose Thomason subsets $\cat V, \cat W \subseteq \Spc(\cat S^c)$ with ${\cat Q} = \cat V \cap \cat W^c$. By \cite[Thm.~6.3]{BalmerFavi11}, we have equivalences
\[
f^*(\kappaSQ) \simeq f^*(e_{\cat V}) \otimes f^*(f_{\cat W}) \simeq e_{\varphi^{-1}\cat V} \otimes f_{\varphi^{-1}\cat W} \simeq  g_{\cat T}(\varphi^{-1}(\cat Q)).
\]
For any $t \in \cat T$, the projection formula then provides equivalences
\[
\kappaSQ \otimes f_*(t) \simeq f_*(f^*(\kappaSQ) \otimes t) \simeq f_*(g_{\cat T}(\varphi^{-1}(\cat Q)) \otimes t).
\]
Since $f_*$ is conservative, $\kappaSQ \otimes f_*(t) = 0$ if and only if $g_{\cat T}(\varphi^{-1}(\cat Q)) \otimes t = 0$. In support-theoretic terms, this is equivalent to the statement that $\cat Q \in \Supp(f_*(t))$ if and only if $\varphi^{-1}(\cat Q) \cap \Supp(t) \neq \varnothing$, as desired.
\end{proof}

\begin{Lem}\label{lem:indfullsupp}
With notation as in \cref{lem:closedfiber}, let $s \in \cat S$ be an object with $\Supp(s) = \{\frak m\}$. Then $\Supp(f^*(s)) = \varphi^{-1}(\frak m)$.
\end{Lem}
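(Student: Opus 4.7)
The plan is to establish the inclusions $\Supp(f^*(s)) \subseteq \varphi^{-1}(\frak m)$ and $\varphi^{-1}(\frak m) \subseteq \Supp(f^*(s))$ in turn; the first is formal, while the second carries the genuine content.

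For the first inclusion, since $\cat S$ is local with closed point $\frak m$, the singleton $\{\frak m\}$ is itself Thomason, so $\kappa_{\cat S}(\frak m) = e_{\{\frak m\}}$, and the hypothesis $\Supp(s) = \{\frak m\}$ forces the isomorphism $s \simeq \kappa_{\cat S}(\frak m) \otimes s$. Applying the symmetric monoidal functor $f^*$ and invoking the identification $f^*(\kappa_{\cat S}(\frak m)) \simeq g_{\cat T}(\varphi^{-1}(\frak m))$ already established inside the proof of \cref{lem:ressupp}, we obtain $f^*(s) \simeq g_{\cat T}(\varphi^{-1}(\frak m)) \otimes f^*(s)$, so $\Supp(f^*(s)) \subseteq \varphi^{-1}(\frak m)$.

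For the second inclusion, \cref{lem:closedfiber} tells us that $\varphi^{-1}(\frak m) = \{\cat P_1, \ldots, \cat P_d\}$ is finite and discrete with each $\cat P_i$ a closed Thomason point. A short induction on $d$, using the relation $e_Y \otimes e_{Y'} \simeq e_{Y \cap Y'}$ for Thomason subsets, together with pairwise disjointness of the $\{\cat P_i\}$, then yields a direct sum decomposition
\[
g_{\cat T}(\varphi^{-1}(\frak m)) \simeq \bigoplus_{i=1}^d \kappa_{\cat T}(\cat P_i).
\]
Combined with the first step, this produces a splitting $f^*(s) \simeq \bigoplus_{i=1}^d \Gamma_{\cat P_i} f^*(s)$, reducing the task to verifying that every summand is non-zero. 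Via the projection formula, together with the conservativity of $f_*$ (automatic in the finite \'etale setting), the condition $\cat P_i \in \Supp(f^*(s))$ is equivalent to the non-vanishing of $f_*(\kappa_{\cat T}(\cat P_i)) \otimes s$ in $\cat S$.

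The main obstacle is to upgrade the immediate consequence of $f^*(s) \neq 0$, which only says that \emph{some} summand is non-zero, to the statement that \emph{every} summand is non-zero. This is precisely where finite \'etaleness enters essentially. Applying $f_*$ to the above splitting gives a decomposition of the residue algebra
\[
\cat A \otimes \kappa_{\cat S}(\frak m) \simeq \bigoplus_{j=1}^d f_*(\kappa_{\cat T}(\cat P_j))
\]
into $d$ non-zero compact commutative idempotent summands, each separable over $\kappa_{\cat S}(\frak m)$ by restriction of the separability of $\cat A$. The plan is then to exploit this separable product structure to conclude that tensoring a non-zero object supported at $\{\frak m\}$ with any individual summand remains non-zero. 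This faithfulness step is the principal technical difficulty and is the place where the proof genuinely uses that $f^*$ is finite \'etale rather than merely a conservative tt-functor admitting a right adjoint.
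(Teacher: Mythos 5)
Your first inclusion is fine and agrees with the paper, and your reduction of the second inclusion --- via the projection formula and conservativity of $f_*$ --- to the claim that $f_*(\Gamma_{\cat P_i}\unit)\otimes s\neq 0$ for each $\cat P_i\in\varphi^{-1}(\frak m)$ is also correct. But the proof stops exactly at the point you yourself identify as ``the principal technical difficulty'': you never actually show that these tensor products are non-zero. This is a genuine gap, not a routine verification. By \cref{lem:ressupp}, $f_*(\Gamma_{\cat P_i}\unit)$ is a non-zero object with support $\{\frak m\}$, and $s$ is a non-zero object with support $\{\frak m\}$; concluding that their tensor product is non-zero is precisely a minimality/tensor-product-formula statement at $\frak m$, which is what \cref{lem:closedpointcriterion} is trying to establish and so cannot be assumed. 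The ``separable product structure'' you invoke does not obviously help: the summands $f_*(\Gamma_{\cat P_i}\unit)$ are \emph{not} compact (the $\Gamma_{\cat P_i}\unit$ are Balmer--Favi idempotents, not compact objects, and $f_*$ cannot make them compact), so no unconditional support formula applies to them.

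The paper closes this gap by a different choice of test object: since each $\{\cat P_i\}$ is Thomason by \cref{lem:closedfiber}, there is a \emph{compact} Koszul-type object $\kappa(\cat P_i)\in\cat T^c$ with $\supp(\kappa(\cat P_i))=\{\cat P_i\}$. Because $f^*$ is finite, $f_*$ preserves compacts, so $f_*(\kappa(\cat P_i))$ is compact and the unconditional half-tensor-product theorem of Balmer--Favi gives $\Supp(f_*(\kappa(\cat P_i))\otimes s)=\supp(f_*(\kappa(\cat P_i)))\cap\Supp(s)=\{\frak m\}\cap\{\frak m\}\neq\varnothing$, whence $\kappa(\cat P_i)\otimes f^*(s)\neq 0$ and $\cat P_i\in\Supp(f^*(s))$. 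Replacing your idempotents by these compact objects is the missing idea; without it (or an equivalent faithfulness input), your argument does not go through.
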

\begin{proof}
By assumption, $s \in \Loco{e_{\{\frak m\}}}$, hence $f^*(s) \in \Loco{f^*(e_{\{\frak m\}})}$. We know from \cite[Thm.~6.3]{BalmerFavi11} that $f^*(e_{\{\frak m\}}) = e_{\varphi^{-1}(\frak m)}$, so $f^*(s) \in \Loco{e_{\varphi^{-1}(\frak m)}}$. This shows that $\Supp(f^*(s)) \subseteq \varphi^{-1}(\frak m)$.

It remains to verify the inclusion $\varphi^{-1}(\frak m) \subseteq \Supp(f^*(s))$. To this end, consider $\cat P \in \varphi^{-1}(\frak m)$. By \cref{lem:closedfiber}, $\cat P$ is a Thomason point in $\Spc(\cat T^c)$. Therefore, there exists $\kappa(\cat P) \in \cat T^c$ with $\supp(\kappa(\cat P)) = \{\cat P\}$. Let $f_*\colon \cat T \to \cat S$ be the right adjoint to $f^*$. Suppose $\cat P \notin \Supp(f^*(s))$, then $\kappa(\cat P) \otimes f^*(s) = 0$, hence 
\[
0 = f_*(\kappa(\cat P) \otimes f^*(s)) \simeq f_*(\kappa(\cat P)) \otimes s.
\]
Since $f^*$ is finite, it follows that $f_*$ preserves compact objects, so we may apply the half-tensor product theorem of Balmer--Favi \cite[Thm.~7.22]{BalmerFavi11} to compute 
\[
\varnothing = \Supp(f_*(\kappa(\cat P)) \otimes s) = \supp(f_*(\kappa(\cat P))) \cap \Supp(s).
\]
We see from \cref{lem:ressupp} that $\supp(f_*(\kappa(\cat P))) = \{\varphi(\cat P)\} = \{\frak m\}$, which implies $\frak m \notin \Supp(s)$, a contradiction. The claim follows.
\end{proof}

The rest of the argument proceeds along similar lines as the proof of \cite[Thm.~6.4]{bhs1}.

\begin{Lem}\label{lem:closedpointcriterion}
With notation as in \cref{lem:closedfiber}, assume additionally that $\cat T$ has minimality at all points of $\varphi^{-1}(\frak m)$. Then minimality holds for $\cat S$ at $\frak m$.
\end{Lem}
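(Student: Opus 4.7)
The plan is to verify criterion (b) of \cref{lem:minimality_crit} for the localizing tensor ideal $\Gamma_{\frak m}\cat S \subseteq \cat S$. Given nonzero $s_1, s_2 \in \Gamma_{\frak m}\cat S$, the task is to produce some $z \in \cat S$ with $\Hom_{\cat S}(s_1 \otimes z, s_2) \neq 0$.

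The first step is to transport the problem to $\cat T$. Since any nonzero $s \in \Gamma_{\frak m}\cat S$ must have $\Supp(s) = \{\frak m\}$, \cref{lem:indfullsupp} yields $\Supp(f^*(s_i)) = \varphi^{-1}(\frak m) = \{\cat P_1,\ldots,\cat P_d\}$ for $i = 1, 2$. Fixing any $\cat P \in \varphi^{-1}(\frak m)$, the colocalizations $\Gamma_{\cat P}f^*(s_1)$ and $\Gamma_{\cat P}f^*(s_2)$ are both nonzero members of the (assumed) minimal localizing ideal $\Gamma_{\cat P}\cat T$. Applying the minimality criterion at $\cat P$ produces some $w \in \cat T$ with
\[
\Hom_{\cat T}(\Gamma_{\cat P}f^*(s_1) \otimes w, \Gamma_{\cat P}f^*(s_2)) \neq 0.
\]

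Next, I would absorb the Balmer--Favi idempotent $e := \Gamma_{\cat P}\unit_{\cat T}$ into $w$ by setting $\tilde w := e \otimes w$, eliminating it from the target. Since $e$ is $\otimes$-idempotent, $f^*(s_1) \otimes \tilde w$ is fixed by $-\otimes e$; on the other hand, the fiber of $f^*(s_2) \otimes e \to f^*(s_2)$ is annihilated by $-\otimes e$. A standard observation (any morphism $A \to X$ with $A \otimes e \simeq A$ and $X \otimes e = 0$ vanishes, via naturality of $-\otimes e \to -$) then shows that postcomposition induces an injection
\[
\Hom_{\cat T}(f^*(s_1) \otimes \tilde w, f^*(s_2) \otimes e) \hookrightarrow \Hom_{\cat T}(f^*(s_1) \otimes \tilde w, f^*(s_2)),
\]
thereby producing a nonzero element of the right-hand Hom.

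Finally, I would descend along $f^*$ using ambidexterity of finite \'etale extensions: in addition to $f^* \dashv f_*$, there is an adjunction $f_* \dashv f^*$ supplied by the non-degenerate trace pairing of a separable algebra of finite degree. Combined with the projection formula, this yields
\[
\Hom_{\cat T}(f^*(s_1) \otimes \tilde w, f^*(s_2)) \;\cong\; \Hom_{\cat S}(f_*(f^*(s_1) \otimes \tilde w), s_2) \;\cong\; \Hom_{\cat S}(s_1 \otimes f_*(\tilde w), s_2),
\]
so setting $z := f_*(\tilde w) \in \cat S$ completes the verification. The principal technical subtlety lies in invoking ambidexterity in the tt-geometric framework of Balmer's separable algebras of finite degree; this is classical but is the one external ingredient required beyond the machinery already developed in this section.
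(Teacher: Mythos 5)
Your argument is correct, but it proves the lemma by a genuinely different route than the paper. The paper works entirely with ideal membership: it uses minimality at \emph{all} points of the fiber together with the local-to-global principle in $\cat T$ to show $e_{\varphi^{-1}(\frak m)} \in \Loco{f^*(s)}$, then pushes this down using the fact that $f_*^{-1}\Loco{f_*f^*(s)}$ is a tensor ideal (\cite[Lem.~3.2]{Balmer16b}), the projection formula, and finally the compactness and non-vanishing of $f_*\unit$ to extract $e_{\{\frak m\}} \in \Loco{s}$. You instead verify the Hom-criterion of \cref{lem:minimality_crit}, which makes the argument parallel to the paper's proof of nil-descent rather than to its \'etale descent argument. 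Two points are worth flagging. First, your descent step genuinely requires the ambidexterity $f_* \dashv f^*$ (equivalently $f^! \simeq f^*$) for finite \'etale extensions; this is true --- it is essentially part of Sanders' characterization of finite \'etale morphisms and of the Balmer--Dell'Ambrogio--Sanders Wirthm\"uller formalism --- but it is an input the paper's proof deliberately avoids, getting by with only the ordinary adjunction and the projection formula. Second, your idempotent-absorption step implicitly uses that $\cat P$ is a closed Thomason point, so that $\Gamma_{\cat P}\unit \simeq e_{\{\cat P\}}$ carries a canonical map to $\unit$; this is exactly what \cref{lem:closedfiber} provides, but you should say so. In exchange for the extra external ingredient, your argument buys something: it only invokes minimality at a \emph{single} point of $\varphi^{-1}(\frak m)$, so it proves a formally stronger statement, and it bypasses the local-to-global principle in $\cat T$ entirely.
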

\begin{proof}
In order to establish minimality of $\cat S$ at $\frak m$, we need to prove that $\Loco{s} = \Gamma_{\frak m}\cat T$ for any $s \in \cat S$ with $\Supp(s) = \{\frak m\}$. Equivalently, because $\frak m$ is a closed point, we have to show that $e_{\frak m} \in \Loco{s}$. 

Consider an object $t$ with $\Supp(s) = \frak m$ and let $\cat P \in \varphi^{-1}(\frak m)$. \Cref{lem:closedfiber} says that $\cat P$ is Thomason, so in particular $e_{\{\cat P\}} \simeq \kappaSP$. By \cref{lem:indfullsupp} and definition of support, $\kappaSP \otimes f^*(s) \neq 0$, so minimality at $\cat P$ implies
\begin{equation}\label{eq:closedpointcriterion}
\kappaSP \in \Loco{\kappaSP \otimes f^*(s)} \subseteq \Loco{f^*(s)}.
\end{equation}
The support of $e_{\varphi^{-1}(\frak m)}$ is precisely $\varphi^{-1}(\frak m)$, and it follows from the local-to-global principle in $\cat T$ together with \cref{eq:closedpointcriterion} that $f^*(e_{\{\frak m\}}) \simeq e_{\varphi^{-1}(\frak m)} \in \Loco{f^*(s)}$. 

As in the proof of \cite[Thm.~6.4]{bhs1} or directly via \cite[Lem.~3.2]{Balmer16b}, the localizing subcategory $f_*^{-1}\Loco{f_*f^*(s)}$ is in fact a localizing tensor ideal in $\cat T$. Since $f^*(s) \in f_*^{-1}\Loco{f_*f^*(s)}$, we have
\[
f^*(e_{\{\frak m\}}) \in \Loco{f^*(s)} \subseteq f_*^{-1}\Loco{f_*f^*(s)}.
\]
The projection formula shows $f_*f^*(s') \simeq f_*(f^*(\unit) \otimes f^*(s')) \simeq f_*f^*(\unit) \otimes s'$ for any $s' \in \cat S$, so this implies:
\[
\Loco{f_*f^*(e_{\{\frak m\}})} \subseteq \Loco{f_*f^*(s)} \subseteq \Loco{s}.
\]
Since $f_*\unit$ is compact and non-zero, $\frak m \in \supp(f_*\unit)$, hence $\Supp(e_{\{\frak m\}}) \subseteq \supp(f_*\unit)$. From \cite[Lem.~3.7]{bhs1}, we thus get $e_{\{\frak m\}} \in \Loco{e_{\{\frak m\}} \otimes f_*\unit} = \Loco{f_*f^*(e_{\{\frak m\}})}$, so $e_{\{\frak m\}} \in \Loco{s}$. This establishes minimality of $\cat S$ at $\frak m$.
\end{proof}

\begin{Lem}\label{lem:generalpointcriterion}
In the situation of \cref{thm:etaledescent}, consider a prime $\cat P \in \Spc(\cat T^c)$. If $\cat T$ has minimality at all points of $\varphi^{-1}(\varphi(\cat P))$, then $\cat S$ has minimality at $\varphi(\cat P)$.
\end{Lem}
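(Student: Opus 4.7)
The strategy is to reduce to the closed-point situation already handled in \cref{lem:closedpointcriterion} by means of a Zariski-style localization centered at $\frak q := \varphi(\cat P)$. Let $U \subseteq \Spc(\cat S^c)$ be the set of generalizations of $\frak q$, that is, $U = \SET{\frak q'}{\frak q \in \overline{\{\frak q'\}}}$. Since $\Spc(\cat S^c)$ is Noetherian, its complement $U^c$ is specialization closed and hence Thomason, so $U$ arises as the complement of a Thomason subset. The corresponding finite localization $\cat S \to \cat S_U := \cat S(U)$ has Balmer spectrum $U$, whose unique closed point is $\frak q$; in particular $\cat S_U$ is local with closed point $\frak q$.

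Setting $V := \varphi^{-1}(U)$, which is the complement of a Thomason subset in $\Spc(\cat T^c)$ by continuity of $\varphi$, I would next check that the finite \'etale extension descends. Finite \'etale extensions are stable under base change along finite localizations: the base change of a compact commutative separable algebra of finite degree along the idempotent defining a finite localization remains a compact commutative separable algebra of the same finite degree. Consequently the induced functor $f_U^* \colon \cat S_U \to \cat T_V$ is a finite \'etale extension whose associated spectral map is the restriction of $\varphi$ to $V \to U$. In particular, the fiber over the closed point $\frak q$ of $\cat S_U$ is exactly $\varphi^{-1}(\varphi(\cat P))$.

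I would then verify that the hypotheses of \cref{lem:closedpointcriterion} are met for $f_U^*$. Applying \cref{lem:closedfiber} to $f_U^*$ shows that $\varphi^{-1}(\frak q)$ consists of Thomason points of $\Spc(\cat T_V^c)$. Moreover, Balmer--Favi idempotents are preserved under finite localization: for any $\cat P' \in V$, the idempotent $\Gamma_{\cat P'}\unit$ already lies in $\cat T_V$, and $\Gamma_{\cat P'}\cat T_V$ agrees with $\Gamma_{\cat P'}\cat T$ as localizing tensor ideals. Hence minimality of $\cat T$ at each $\cat P' \in \varphi^{-1}(\frak q)$, which holds by assumption, transfers to minimality of $\cat T_V$ at these points. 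Applying \cref{lem:closedpointcriterion} to $f_U^*$ then yields minimality of $\cat S_U$ at $\frak q$, and the analogous identification $\Gamma_{\frak q}\cat S \simeq \Gamma_{\frak q}\cat S_U$ shows that this is equivalent to minimality of $\cat S$ at $\frak q = \varphi(\cat P)$.

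The main technical obstacle is ensuring that both the finite \'etale structure and the relevant support-theoretic data (Balmer--Favi idempotents and the minimality criterion) behave well under finite localization. This is largely bookkeeping but requires a careful invocation of the base-change properties of separable algebras together with the compatibility results for Balmer--Favi idempotents established in \cite{BalmerFavi11} and already employed in the proofs of \cref{lem:ressupp,lem:indfullsupp,lem:closedpointcriterion}.
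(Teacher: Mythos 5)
Your proposal is correct and follows essentially the same route as the paper: localize $\cat S$ at the generalizations of $\varphi(\cat P)$ so that it becomes a local tt-category with closed point $\varphi(\cat P)$, observe that the finite \'etale structure passes to the corresponding finite localizations (the paper cites \cite[Ex.~5.12]{Sanders21pp} for this, where you argue via base change of separable algebras), and then apply \cref{lem:closedpointcriterion} together with the fact that minimality at a point is unchanged by finite localization away from it. The only differences are expository — you spell out the preservation of the Balmer--Favi idempotents explicitly, which the paper leaves implicit.
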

\begin{proof}
The finite localization away from the prime ideal $\varphi(\cat P)$ induces a commutative diagram
\[
\xymatrix{\cat S \ar[r]^-{f^*} \ar[d] & \cat T \ar[d] \\
\cat S/\langle \varphi(\cat P)\rangle \ar[r]_-{g^*} & \cat T(W),}
\]
where $W= \varphi^{-1}(\gen(\varphi(\cat P)))$, see \cite[Prop.~1.30]{bhs1}. The two vertical maps are finite localizations and $\cat S/\langle \varphi(\cat P)\rangle = \cat S(\gen(\varphi(\cat P)))$ is local with closed point $\frak m = \varphi(\cat P)$. It follows that $\cat S$ is minimal at $\cat P$ if and only if $\cat S/\langle \varphi(\cat P)\rangle$ is minimal at its closed point $\frak m$. Moreover, the induced functor $g^*$ is finite \'etale by \cite[Ex.~5.12]{Sanders21pp}. By \cref{lem:closedpointcriterion}, $\cat S/\langle \varphi(\cat P)\rangle$ is minimal at $\frak m$ if $\cat T(W)$ has minimality at all points of $\varphi^{-1}(\frak m)$. Minimality at these points is detected in $\cat T$, so we are done.
\end{proof}

\begin{Rem}\label{rem:surjectivity}
If $f^*\colon \cat S \to \cat T$ is a conservative finite \'etale morphism of rigidly compactly generated tt-categories, then the induced map on spectra is surjective. Indeed, we can check this locally in $\cat S$, so we can appeal to \cite[Thm.~1.2]{Balmer18}, for example. In particular, if $\cat T$ is Noetherian, then so is $\cat S$. In this case, the local-to-global principle holds for $\cat S$.
\end{Rem}

\begin{proof}[Proof of \cref{thm:etaledescent}]
The Balmer spectrum of the tt-category $\cat S$ is Noetherian and hence $\cat S$ satisfies the local-to-global principle. As observed in \cref{rem:surjectivity}, the induced map $\varphi$ on spectra is surjective. Let $\cat Q \in \Spc(\cat S^c)$ be a prime, so there exists some $\cat P \in \Spc(\cat T^c)$ with $\varphi(\cat P) = \cat Q$. Minimality of $\cat S$ at $\cat Q$ then follows from \cref{lem:generalpointcriterion} and assumption on $\cat T$.
\end{proof}

\subsubsection{Nil-descent}

There is a third kind of descent which we will use repeatedly in this paper. It exhibits conditions for descent of stratification along a  $tt$-functor $f^*\colon \cat S \to \cat T$ which has the property that the induced map on spectra $\varphi\colon \Spc(\cat T^c) \to \Spc(\cat S^c)$ is a bijection. Informally speaking, one may thus think of $\cat T$ as a nilpotent thickening of $\cat S$, so that we refer to it as \emph{nil-descent}. We will need a base-change lemma for support and cosupport from \cite{bchs1}:

\begin{Lem}\label{lem:cosuppbasechange}
Let $f^*\colon \cat S \to \cat T$ be a tt-functor with right adjoint $f_*$, which admits a further right adjoint $f^!$. We write $\varphi$ for the map induced by $f^*$ on Balmer spectra. Assume that $f_*$ is conservative and let $s \in \cat S$, then:
    \begin{enumerate}
        \item $\varphi(\Supp(f^*s)) \subseteq  \Supp(s)$, with equality if $f^*$ is conservative.
        \item $\varphi(\Cosupp(f^!s)) \subseteq  \Cosupp(s)$, with equality if $f^!$ is conservative.
    \end{enumerate}
\end{Lem}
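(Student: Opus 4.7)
The plan is to prove (a) and (b) by dual arguments, in both cases transporting the hypothesis across the adjunctions using conservativity of $f_*$ (respectively $f^!$), combined with \Cref{lem:ressupp} and the local-to-global principle (which holds on Noetherian spectra).

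For (a), assume $\cat P \in \Supp(f^*s)$, so $g_{\cat T}(\cat P)\otimes f^*s\neq 0$. Applying the conservative functor $f_*$ and the projection formula gives
\[
f_*(g_{\cat T}(\cat P)\otimes f^*s)\simeq f_*g_{\cat T}(\cat P)\otimes s\neq 0.
\]
By \Cref{lem:ressupp}, $\Supp(f_*g_{\cat T}(\cat P))=\varphi(\{\cat P\})=\{\varphi(\cat P)\}$; combined with the trivial containment $\Supp(a\otimes b)\subseteq\Supp(a)\cap\Supp(b)$ and the local-to-global principle, this forces $\varphi(\cat P)\in\Supp(s)$. Conversely, if $f^*$ is conservative and $\cat Q\in\Supp(s)$, apply $f^*$ to $g_{\cat S}(\cat Q)\otimes s\neq 0$ and use $f^*g_{\cat S}(\cat Q)\simeq g_{\cat T}(\varphi^{-1}(\cat Q))$ from \cite[Thm.~6.3]{BalmerFavi11}; local-to-global in $\cat T$ produces a point of $\varphi^{-1}(\cat Q)\cap\Supp(f^*s)$, witnessing $\cat Q\in\varphi(\Supp(f^*s))$.

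Part (b) runs parallel with $\iHom$ replacing $\otimes$. From $\iHom(g_{\cat T}(\cat P),f^!s)\neq 0$, applying the conservative $f_*$ and the duality $f_*\iHom(a,f^!b)\simeq\iHom(f_*a,b)$ (a formal consequence of $f^*\dashv f_*\dashv f^!$) yields $\iHom(f_*g_{\cat T}(\cat P),s)\neq 0$. Since $\Supp(f_*g_{\cat T}(\cat P))=\{\varphi(\cat P)\}$, the local-to-global principle gives $f_*g_{\cat T}(\cat P)\in\Loco{g_{\cat S}(\varphi(\cat P))\otimes f_*g_{\cat T}(\cat P)}$. If $\iHom(g_{\cat S}(\varphi(\cat P)),s)$ vanished, tensor--hom adjunction would annihilate the generator of this localizing subcategory, hence — by exactness of $\iHom(-,s)$ in the first slot and compatibility with coproducts — also $\iHom(f_*g_{\cat T}(\cat P),s)$, a contradiction. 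Therefore $\varphi(\cat P)\in\Cosupp(s)$. For the equality under conservativity of $f^!$, start from $\iHom(g_{\cat S}(\cat Q),s)\neq 0$, apply $f^!$, and use the dual formula $f^!\iHom(u,v)\simeq\iHom(f^*u,f^!v)$ to obtain $\iHom(g_{\cat T}(\varphi^{-1}(\cat Q)),f^!s)\neq 0$. A direct pointwise computation, using $g_{\cat T}(\cat P)\otimes g_{\cat T}(\cat Y)\simeq g_{\cat T}(\cat P)$ when $\cat P\in\cat Y$ and $0$ otherwise, gives the identity $\Cosupp(\iHom(g_{\cat T}(\cat Y),t))=\cat Y\cap\Cosupp(t)$, from which we extract a point $\cat P\in\varphi^{-1}(\cat Q)\cap\Cosupp(f^!s)$.

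The main technical input — and the only non-formal step — is the \emph{co-local-to-global principle} for $\cat T$, namely $t\neq 0\Leftrightarrow\Cosupp(t)\neq\varnothing$ on Noetherian Balmer spectrum, which is the cosupport analogue of the principle of Stevenson/\cite{bhs1} and is part of the framework developed in \cite{bchs1}. Granted this, the remaining steps are straightforward manipulations with the projection and duality formulas, mirroring the proof of \Cref{lem:ressupp}.
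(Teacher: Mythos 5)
The paper does not actually prove \cref{lem:cosuppbasechange}: it imports the statement wholesale from \cite{bchs1} (``We will need a base-change lemma for support and cosupport from \cite{bchs1}''), so there is no in-paper argument to compare against. Your reconstruction is correct. All the formal ingredients check out: the projection formula, the identities $f_*\iHom(a,f^!b)\simeq\iHom(f_*a,b)$ and $f^!\iHom(u,v)\simeq\iHom(f^*u,f^!v)$ (both follow by Yoneda from the adjunctions plus the projection formula), the computation $\Supp(f_*\kappaaux_{\cat T}(\cat P))=\{\varphi(\cat P)\}$ via \cref{lem:ressupp}, and the fact that $\{a:\iHom(a,s)=0\}$ is a localizing tensor ideal, which together with the local-to-global principle closes the forward direction of (b). You are also right to isolate the codetection property $\Cosupp(t)=\varnothing\Rightarrow t=0$ as the one non-formal input for the converse of (b); this is exactly the colocal-to-global principle of \cite{bchs1}, valid on (weakly) Noetherian spectra, so leaning on it is consistent with the paper's own sourcing of the lemma. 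The only remark worth making is that the forward inclusion in (a) admits a two-line shortcut you bypass: apply \cref{lem:ressupp} to $t=f^*s$ to get $\varphi(\Supp(f^*s))=\Supp(f_*f^*s)$, and then $f_*f^*s\simeq f_*\unit\otimes s$ gives $\Supp(f_*f^*s)\subseteq\Supp(s)$ directly, without any pointwise detection argument. Your pointwise version is nevertheless correct and has the virtue of being structurally parallel to part (b).
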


\begin{Thm}\label{thm:nildescent}
Let $(f^*,f_*,f^!)\colon \cat S \to \cat T$ be a triple of adjoints with $f^*$ a tt-functor and suppose $\Spc(\cat T^c)$ is Noetherian. Assume that
    \begin{enumerate}
        \item all of $f^*$, $f_*$, and $f^!$ are conservative;
        \item $\varphi\colon \Spc(\cat T^c) \to \Spc(\cat S^c)$ is bijective.\footnote{The map $\varphi$ is always spectral. Note, however, that a bijective spectral map is not necessarily a homeomorphism.}
    \end{enumerate}
If $\cat T$ is stratified, then so is $\cat S$. 
\end{Thm}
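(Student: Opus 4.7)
The plan is to verify the two characterizing conditions for stratification recalled in \cref{ssec:stratificationbasics}: local-to-global and minimality at each point of $\Spc(\cat S^c)$. Since $\varphi$ is bijective and $\Spc(\cat T^c)$ is Noetherian, any ascending chain of quasi-compact opens in $\Spc(\cat S^c)$ pulls back bijectively to a stabilizing chain in $\Spc(\cat T^c)$; hence $\Spc(\cat S^c)$ is likewise Noetherian and $\cat S$ satisfies the local-to-global principle by \cite[Thm.~6.9]{Stevenson13}. It thus remains to establish minimality of $\Gamma_{\frak q}\cat S$ for each $\frak q \in \Spc(\cat S^c)$. Fix such a $\frak q$, let $\frak p = \varphi^{-1}(\frak q)$ be its unique preimage, and note that compatibility of Balmer--Favi idempotents with the tt-functor $f^*$ (as in \cite[Thm.~6.3]{BalmerFavi11}) yields $f^*\Gamma_{\frak q}\unit_{\cat S} \simeq \Gamma_{\frak p}\unit_{\cat T}$.

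I will verify minimality at $\frak q$ through the Hom criterion \cref{lem:minimality_crit}: given nonzero $s_1, s_2 \in \Gamma_{\frak q}\cat S$, I must produce $z \in \cat S$ with $\Hom_{\cat S}(s_1 \otimes z, s_2) \neq 0$. Combining the projection formula with the adjunction $f_* \dashv f^!$ yields the natural identification
\[
\Hom_{\cat S}(s_1 \otimes f_*w, s_2) \simeq \Hom_{\cat T}(f^*s_1 \otimes w, f^!s_2)
\]
for every $w \in \cat T$, so it suffices to show $\ihom_{\cat T}(f^*s_1, f^!s_2) \neq 0$ and then take $z = f_*w$ for a witness $w$. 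Now $f^*s_1 \in \Gamma_{\frak p}\cat T$ is nonzero by conservativity of $f^*$ and \cref{lem:cosuppbasechange}(i). Using $f^*s_1 \in \Gamma_{\frak p}\cat T$ one has $\ihom_{\cat T}(f^*s_1, f^!s_2) \simeq \ihom_{\cat T}(f^*s_1, \Lambda^{\frak p}f^!s_2)$, and minimality of $\Gamma_{\frak p}\cat T$ in the stratified target $\cat T$, combined with the idempotence $\Lambda^{\frak p}\Lambda^{\frak p} \simeq \Lambda^{\frak p}$ of the Balmer--Favi calculus, forces this internal hom to be nonzero whenever $\Lambda^{\frak p}f^!s_2 \neq 0$, i.e., whenever $\frak p \in \Cosupp(f^!s_2)$. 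By \cref{lem:cosuppbasechange}(ii) and bijectivity of $\varphi$, this is equivalent to $\frak q \in \Cosupp(s_2)$.

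Finally, I claim that $\Lambda^{\frak q}s_2 \neq 0$ for every nonzero $s_2 \in \Gamma_{\frak q}\cat S$. If instead $\Lambda^{\frak q}s_2 = 0$, then for every $v \in \cat S$ the adjunction $\Gamma_{\frak q} \dashv \Lambda^{\frak q}$ gives $\Hom_{\cat S}(\Gamma_{\frak q}v, s_2) \simeq \Hom_{\cat S}(v, \Lambda^{\frak q}s_2) = 0$; taking $v = s_2$, which lies in $\Gamma_{\frak q}\cat S$ so that $\Gamma_{\frak q}v \simeq s_2$, forces $\id_{s_2} = 0$, contradicting $s_2 \neq 0$. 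This completes the verification of the Hom criterion, and hence of stratification. The main technical obstacle is the Hom-cosupport compatibility invoked in the second paragraph -- namely, that $\ihom_{\cat T}(t, u) \neq 0$ for $t \in \Gamma_{\frak p}\cat T$ nonzero and $u \in \cat T$ with $\Lambda^{\frak p}u \neq 0$; justifying this via the candidate localizing tensor ideal $\{x \in \cat T : \ihom(x, \Lambda^{\frak p}u) = 0\}$ requires careful bookkeeping of its tensor-ideal closure, which is controlled by the idempotent calculus of \cite{BalmerFavi11} together with stratification of $\cat T$ at $\frak p$.
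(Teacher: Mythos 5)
Your proposal is correct and follows essentially the same route as the paper: transfer of Noetherianity along the bijection $\varphi$ to get the local-to-global principle, then minimality via \cref{lem:minimality_crit} by moving $s_1$ along $f^*$ and $s_2$ along $f^!$, using \cref{lem:cosuppbasechange} and the adjunction/projection-formula identity $\Hom_{\cat S}(s_1\otimes f_*w,s_2)\simeq\Hom_{\cat T}(f^*s_1\otimes w,f^!s_2)$. The only divergence is that where the paper cites the cosupport formula $\Cosupp\iHom(t_1,t_2)=\Supp(t_1)\cap\Cosupp(t_2)$ from \cite{bchs1} to see that $\iHom(f^*s_1,f^!s_2)\neq 0$, you rederive the needed instance directly from minimality of $\Gamma_{\frak p}\cat T$ applied to the localizing tensor ideal $\{x:\iHom(x,\Lambda^{\frak p}f^!s_2)=0\}$ together with $\Lambda^{\frak p}\Lambda^{\frak p}\simeq\Lambda^{\frak p}$ --- a correct and self-contained substitute for that citation.
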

\begin{proof}
Because $\Spc(\cat T^c)$ is Noetherian, it follows from the second assumption that $\Spc(\cat S^c)$ is Noetherian as well. As noted before, this implies that the local-to-global principle holds for $\cat S$. It thus remains to verify minimality at all primes $\cat P \in \Spc(\cat S^c)$.  

Let $s_1,s_2 \in \Gamma_{\cat P}\cat S$ be non-zero objects. By adjunction, $\cat P \in \Cosupp(s_2)$. Under our assumptions, the base-change formulas of \cref{lem:cosuppbasechange} show that 
\begin{align*}
    \Supp(f^*s_1) & = \varphi^{-1}\Supp(s_1) = \{\varphi^{-1}(\cat P)\} \\
    \Cosupp(f^!s_2) & = \varphi^{-1}\Cosupp(s_2) \ni \varphi^{-1}(\cat P).
\end{align*}
Since $\cat T$ is stratified, the cosupport of the internal Hom in $\cat T$ for any two objects $t_1,t_2 \in \cat T$ is given by $\Cosupp\iHom(t_1,t_2) = \Supp(t_1) \cap \Cosupp(t_2)$, see \cite[Thm.~4.1]{bchs1}. In particular, we get:
\[
\Cosupp\iHom(f^*s_1,f^!s_2) = \Supp(f^*s_1) \cap \Cosupp(f^!s_2) \ni \varphi^{-1}(\cat P).
\]
It follows that there exists $z \in \cat T$ such that 
\[
0 \neq \Hom_{\cat T}(z,\iHom(f^*s_1,f^!s_2)) \simeq \Hom_{\cat T}(z \otimes f^*s_1,f^!s_2).
\]
Adjunction and the projection formula imply that $\Hom_{\cat S}((f_*z) \otimes s_1,s_2)$ is non-trivial as well, so we conclude that $\Gamma_{\cat P}\cat S$ is minimal by virtue of the minimality criterion \cref{lem:minimality_crit}. 
\end{proof}

\begin{Rem}
In the context of BIK-stratification, Shaul--Williamson \cite{sw_descent} prove a related result, which in fact was one source of inspiration for \cref{thm:nildescent}. However, their arguments are different from ours and in particular do not make use of cosupport.
\end{Rem}

\section{The integral stable module category}

Throughout this section, $G$ is a finite group and $R$ is a Noetherian commutative ring, usually assumed to be regular. We are interested in $R$-linear representations of $G$ whose underlying $R$-module is projective. In the derived setting and if $R$ is regular, this is not restrictive, as $R$-modules admit projective resolutions of finite length. Our main objective is to construct an appropriate integral version of the stable module category and to provide both homotopical and algebraic models for it. In the last subsection, we recall Lau's theorem on the Balmer spectrum of the Deligne--Mumford stack $[\Spec(R)/G]$ for regular $R$ and use it to compute $\Spc(\stmod(G,R))$.

\subsection{Abelian and derived categories of representations}

For simplicity, we only consider coefficient rings $R$ with trivial $G$-action; otherwise, we would have to work with twisted group rings. Write $R[G]$ for the group ring of $G$ with coefficients in $R$ and $\Mod(R[G])$ for the abelian category of $R$-linear $G$-representations, i.e., modules over $R[G]$. There is a symmetric monoidal structure $\otimes_R$ on $\Mod(R[G])$ with action obtained from the diagonal map $G \to G \times G$; the monoidal unit is the trivial module $R$. The category $\Mod(R[G])$ is Grothendieck abelian, so we can consider the (unbounded) derived category $\cat D(R[G])$. Let $\projmod(G,R)$ denote the full subcategory of $\Mod(R[G])$ on those modules which are projective as $R$-modules.

\begin{Rec}\label{rec:localsystems}
Let $\cat C = (\cat C,\otimes, \unit)$ be a compactly generarted symmetric monoidal $\infty$-category and view the classifying space $BG$ as an $\infty$-groupoid. The category of \emph{local systems} on $BG$ with coefficients $\cat C$ is defined as $\Fun(BG,\cat C)$. Informally speaking, a local system consists of an object in $\cat C$ equipped with a coherent action by $G$. Equipped with the point-wise tensor product, this has the structure of a compactly generated symmetric monoidal stable $\infty$-category with unit $\unit$. More details about the $\infty$-category of local systems can be found for example in \cite[\S4.4]{HopkinsLurie13pp} and \cite[\S3]{HALurie}.
\end{Rec}

Specializing this construction to $\cat C = \cat D(R)$, we note that $\Fun(BG,\cat D(R))$ is usually not rigidly-compactly generated: Indeed, the unit is in general not compact. The full subcategory of dualizable objects can be identified as
\[
\Fun(BG,\cat D(R))^{\dual} \simeq \Fun(BG,\Perf(R)),
\]
which contains $\Fun(BG,\cat D(R))^{\omega}$ as a full subcategory.

\begin{Def}
We define the \emph{derived category of $G$-representations} with coefficients in $R$ as the small stable $\infty$-category
\[
\rep(G,R) = \Fun(BG,\Perf(R)).
\]
The point-wise tensor product in $\Perf(R)$ equips $\rep(G,R)$ with the structure of a symmetric monoidal $\infty$-category whose tensor product $\otimes = \otimes_R$ is exact in both variables. The monoidal unit is $R = R_G$ with trivial $G$-action.
\end{Def}

\begin{Rem}\label{rem:repmodels}
Viewed as a full subcategory of $\Fun(BG,\cat D(R)) \simeq \cat D(R[G)])$
equipped with the pointwise symmetric monoidal structure, the category $\rep(G,R)$ can be identified as the bounded derived category of objects whose underlying complex of $R$-modules is perfect. If $R$ is regular, this in turn is equivalent to the full subcategory of complexes all of whose modules are finitely generated projective $R$-modules. This is also equivalent to the category of perfect complexes on the stack $[\Spec(R)/G]$ as considered by Lau, see \cite[Prop.~3.1]{lau_spcdmstacks} and the discussion around it. 
\end{Rem}

\begin{Lem}
For $M \in \projmod(G,R)$ viewed as an object of $\rep(G,R)$ concentrated in degree $0$, we have an isomorphism
\[
\pi_*M := \pi_*\Hom_{\rep(G,R)}(R,M) \cong H^{-*}(G;M).
\]
In particular, $\pi_*R$ is the group cohomology of $G$ with coefficients in $R$.
\end{Lem}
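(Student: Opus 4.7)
The plan is to identify the mapping space $\Hom_{\rep(G,R)}(R,M)$ with an $\Ext$-group in the abelian category $\Mod(R[G])$, which by classical homological algebra computes group cohomology.

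First, I would exploit the fact that $\rep(G,R) = \Fun(BG,\Perf(R))$ sits as a full subcategory of $\Fun(BG,\cat D(R))$, so mapping spaces out of the unit can equivalently be computed in the larger category. Next, I would invoke the symmetric monoidal equivalence $\Fun(BG,\cat D(R)) \simeq \cat D(R[G])$ (as recalled in \cref{rem:repmodels}, this is standard and sends the constant functor $R$ to the trivial module $R$). Under this identification, if $M \in \projmod(G,R)$ is viewed in degree $0$, then the mapping spectrum is computed as
\[
\Hom_{\rep(G,R)}(R,M) \simeq \Hom_{\cat D(R[G])}(R,M).
\]

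Taking homotopy groups, this yields
\[
\pi_{-n}\Hom_{\cat D(R[G])}(R,M) \cong \Ext^n_{R[G]}(R,M).
\]
By the very definition of group cohomology, the right-hand side is $H^n(G;M)$. Combining and reindexing gives $\pi_*M \cong H^{-*}(G;M)$, and specializing to $M = R$ identifies $\pi_*R$ with $H^{-*}(G;R)$.

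The only potentially delicate point is checking the passage through the equivalence $\Fun(BG,\cat D(R)) \simeq \cat D(R[G])$, but this is a well-known statement (one concrete way: view $BG$ as a one-object $\infty$-groupoid and observe that functors out of it into a stable $\infty$-category with a chosen object are the same as modules over the endomorphism $\bbE_1$-algebra of that object, which for the unit in $\cat D(R)$ recovers $R[G]$). With that foundation in place, the identification is purely formal and no genuine obstacle arises.
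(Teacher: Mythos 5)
Your argument is correct and is exactly the standard identification that the paper leaves implicit (the lemma is stated without proof, and \cref{rem:repmodels} already records the equivalence $\Fun(BG,\cat D(R))\simeq \cat D(R[G])$ sending the unit to the trivial module): since $\rep(G,R)$ is a full subcategory of $\Fun(BG,\cat D(R))$, the mapping spectrum out of the unit is the derived $\Hom_{R[G]}(R,M)$, whose homotopy groups are $\Ext^{-*}_{R[G]}(R,M)=H^{-*}(G;M)$ by definition of group cohomology. One precision on your parenthetical justification of the equivalence: the Schwede--Shipley argument applies to the compact generator $\ind_e R\simeq R[G]$ (the left Kan extension of $R$ along $*\to BG$), whose endomorphism algebra is the discrete ring $R[G]$; the endomorphism algebra of the unit $R$ itself is $R^{hG}$, which would identify the wrong category.
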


The category $\rep(G,R)$ has also appeared in Mathew's appendix to \cite{treumannmathew_reps}, where it was denoted $\mathrm{Rep}(G,R)$. There is a corresponding big version, obtained by passing to the ind-category.

\begin{Def}\label{def:Rep}
Write $\Rep(G,R) = \Ind\rep(G,R)$ for the ind-category of $\rep(G,R)$, equipped with the induced symmetric monoidal structure. 
\end{Def}

By construction, $\Rep(G,R)$ is a compactly generated tt-category with unit $R$ and with full subcategory of compact objects equivalent to the idempotent completion of $\rep(G,R)$, see \cref{rec:indcat}. 

\begin{Exa}
If $R =k$ is a field of characteristic $p$, the category $\Rep(G,k)$ is equivalent to $K(\Inj(k[G]))$, the homotopy category of unbounded chain complexes of injective $k[G]$-modules, as studied in \cite{bensonkrause_kinj} and \cite{BensonIyengarKrause11a}. Likewise, we have symmetric monoidal equivalences
\[
\rep(G,k) \simeq K(\Inj(k[G]))^c \simeq \cat D^b(\Mod^{\fp}(k[G])),
\]
the bounded derived category of finitely presented $k[G]$-modules.
\end{Exa}

If $G$ is a finite $p$-group and $k$ of characteristic $p$, then $\Rep(G,k)$ is generated by $k$. Derived Morita theory (due to Schwede--Shipley \cite{SS03}) then furnishes an equivalence $\Rep(G,k) \simeq \Mod_{\Sp}(k^{hG})$, where $k^{hG} \simeq C^*(BG,k)$ is the commutative ring spectrum of cochains on $BG$ with coefficients in $k$. There is an equivariant generalization of this result which also holds for regular coefficient rings, essentially established by Mathew. For a more general result in this direction, see \cite{balmergallauer_permutationmodulescohomsing}.

\begin{Thm}[Mathew]\label{thm:eqmoritarep}
Let $\Sp_G$ be the symmmetric monoidal stable $\infty$-category of genuine $G$-spectra for a finite group $G$. Write $\borel{R} = F(EG_+,\infl R) \in \CAlg(\Sp_G)$ for the $G$-Borel-equivariant spectrum associated to $R$. If $R$ is regular, then there is a symmetric monoidal equivalence
\[
\Mod_{\Sp_G}(\borel{R}) \simeq \Rep(G,R).
\]
Under this equivalence, the module $R[G]$ corresponds to $\borel{R}\otimes F(G_+,S_G^0)$.
\end{Thm}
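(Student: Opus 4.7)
The plan is to identify the two sides via $\infty$-categorical Morita theory, using $\borel R \otimes F(G_+, S_G^0) \leftrightarrow R[G]$ as a common compact generator. First, I would verify that the equivariant object is a compact generator of $\Mod_{\Sp_G}(\borel R)$. The Wirthm\"uller isomorphism for finite $G$ identifies $F(G_+, S_G^0)$ with $\ind_e^G S_G^0 \simeq G_+$, so the object in question is $\borel R \otimes G_+$. Compactness follows because $G_+$ is compact in $\Sp_G$ and base-change along $\unit_G \to \borel R$ preserves compact objects, while generation follows from the identification $\Map_{\Mod_{\borel R}}(\borel R \otimes G_+, -) \simeq \res_e(-)$, which is conservative on Borel-complete $\borel R$-modules.

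Next, I would match endomorphism rings. On the equivariant side, Frobenius reciprocity together with the equivalence $\res_e\borel R \simeq R$ (since $EG$ is non-equivariantly contractible) yields $\End(\borel R \otimes G_+) \simeq \res_e(\borel R \otimes G_+) \simeq R[G]$ as ring spectra, with the standard group-algebra multiplication. On the target side, $R[G] = \ind_e^G R$ is a compact object of $\rep(G,R) = \Fun(BG, \Perf(R))$ with $\End_{\Rep(G,R)}(R[G]) \simeq \res_e\ind_e^G R \simeq R[G]$. Since $R$ is a discrete commutative ring, both endomorphism ring spectra are Eilenberg--MacLane models of the same associative ring $R[G]$.

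Once the common compact generator and its endomorphism ring are identified, the $\infty$-categorical Schwede--Shipley/Lurie Morita theorem promotes this to a symmetric monoidal equivalence between the localizing tensor ideals generated by $R[G]$ on either side. The role of the regularity hypothesis on $R$ is to ensure that $\Perf(R) \simeq \cat D^b(R\text{-mod}^{\fg})$, so that the compact objects of $\Rep(G,R) = \Ind\Fun(BG, \Perf(R))$ admit a clean algebraic description compatible with the equivariant side. The main obstacle is the delicate interplay between the genuine $G$-equivariant phenomena in $\Sp_G$ (geometric fixed points, Tate constructions) and the Borel-complete module theory on $\borel R$: one must verify that $\Mod_{\Sp_G}(\borel R)$ is exhausted by the localizing tensor ideal generated by $\borel R \otimes G_+$, and that the resulting Morita equivalence respects the symmetric monoidal structures on both sides.
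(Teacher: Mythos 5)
There is a genuine gap, and it sinks the argument: $R[G]$ is \emph{not} a compact generator of $\Rep(G,R)$, and $\borel{R}\otimes G_+$ is \emph{not} a compact generator of $\Mod_{\Sp_G}(\borel{R})$. On the algebraic side, the localizing (tensor) ideal of $\Rep(G,R)$ generated by $R[G]$ is exactly $\cat D(R[G])=\Ind\Perf(R[G])$, and the Verdier quotient by it is $\StMod(G,R)$, which is nonzero whenever $|G|$ is not invertible in $R$ (its unit has $\pi_0\cong R/|G|$). So your Morita argument, run to completion, would prove $\Mod(R[G])\simeq \Loc\langle R[G]\rangle=\cat D(R[G])$ --- a true but different statement --- rather than identifying all of $\Rep(G,R)$. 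The corresponding failure on the equivariant side is your conservativity claim: not every $\borel{R}$-module is Borel-complete, and $\Map_{\Mod_{\borel{R}}}(\borel{R}\otimes G_+,-)\simeq \res_e(-)$ is not conservative. Concretely, $\borel{R}\otimes\widetilde{EG}$ is a nonzero (indeed compact) $\borel{R}$-module with $\res_e(\borel{R}\otimes\widetilde{EG})\simeq 0$, since $\res_e\widetilde{EG}\simeq 0$; this object is precisely the unit of $\StMod(G,R)$. You do flag ``exhaustion by the ideal generated by $\borel{R}\otimes G_+$'' as the main obstacle, but that exhaustion is simply false, so the obstacle cannot be overcome along these lines.

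The fix --- and the paper's actual route --- is to use the \emph{whole family} of permutation modules as generators, not just the free one. The paper invokes a fully faithful symmetric monoidal functor $\Mod_{\Sp_G}(\borel{R})\to\Fun(BG,\cat D(R))$ from Mathew--Naumann--Noel sending $\borel{R}\otimes G/H_+$ to $R[G/H]$ for every subgroup $H\le G$ (which also disposes of the monoidality issue you defer), together with Mathew's theorem that for \emph{regular} $R$ one has $\Fun(BG,\Perf(R))=\Thick\langle R[G/H]\mid H\le G\rangle$. Matching the two generating families and passing to ind-completions gives the equivalence. Note that this is also where the regularity hypothesis genuinely enters: it is needed for generation by permutation modules, not merely to ensure $\Perf(R)\simeq\cat D^b$ of finitely generated modules as you suggest. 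As a sanity check that something beyond $H=e$ is required, see the paper's warning that even for $G$ a $p$-group the trivial module does not generate $\rep(G,R)$ integrally; the stable module category measures exactly the failure of $R[G]$ alone to generate.
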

\begin{proof}
By \cite[Cor.~6.21]{MathewNaumannNoel17}, there is a fully faithful symmetric monoidal exact functor
\[
\xymatrix{\Mod_{\Sp_G}(\borel{R}) \ar[r] & \Fun(BG,\cat D(R)),}
\]
which sends $\borel{R} \otimes G/H_+$ to $R[G/H]$. On the one hand, the domain is compactly generated by $\SET{\borel{R} \otimes G/H_+}{H\subseteq G}$. On the other hand, in Theorem A.4 of \cite{treumannmathew_reps}, Mathew shows that 
\begin{equation}\label{eq:permmodulesgenerate}
\Fun(BG,\Perf(R)) = \Thick{\SET{R[G/H]}{H\subseteq G}}
\end{equation}
whenever $R$ is regular, i.e., $\rep(G,R)$ is generated by  permutation modules. It follows that $\Mod_{\Sp_G}(\borel{R})^c$ is equivalent as a symmetric monoidal $\infty$-category to $\Fun(BG,\Perf(R))$. Passing to the ind-completions then finishes the proof.
\end{proof}

\begin{Rem}
Note that $\borel{R}^H \simeq R^{hH}$ for any subgroup $H$ of $G$. 
\end{Rem}

\begin{Warn}\label{warn:generation}
In contrast to the modular case, $\rep(G,R)$ is in general not generated by the trivial module when $G$ is a $p$-group. Indeed, let $G=C_2$ and $R = \bbZ_2$, then the augmentation ideal of $\bbZ_2[C_2]$ is isomorphic to the sign representation of $C_2$ on $\bbZ_2$, which is not isomorphic to the trivial representation. 
\end{Warn}

\subsection{The stable module category I: the topological model}

We now introduce a version of the stable module category of $R$-linear $G$-representations. This definition is inspired by the description of the stable module category in the case $R=k$ is a field due to Rickard~\cite{Rickard89} and Buchweitz:
\[
\stmod(k[G]) \simeq \cat D^b(k[G])/\Perf(k[G])
\]
as tensor-triangulated categories.

As observed above, for $R$ a general commutative ring, any compact object in $\Fun(BG,\cat D(R))$ is dualizable, so there is an inclusion $\iota\colon \Fun(BG,\cat D(R))^c \subseteq \Fun(BG,\cat D(R))^{\dual}$ between full subcategories of $\Fun(BG,\cat D(R))$.

\begin{Def}\label{def:stmod}
The \emph{stable module category} of $R$-linear $G$-representations is defined as the Verdier quotient
\begin{equation}\label{eq:rickardquotient}
    \stmod(G,R) = \frac{\Fun(BG,\cat D(R))^{\dual}}{\Fun(BG,\cat D(R))^c},
\end{equation}
where we remind the reader that the superscript `$\dual$' indicates the full subcategory of dualizable objects. As such, it inherits a symmetric monoidal structure, which we will also denote by $\otimes = \otimes_R$. We define the big stable module of $R$-linear $G$-representations as the ind-completion of $\stmod(G,R)$, that is
\[
\StMod(G,R) = \Ind\stmod(G,R).
\]
The symmetric monoidal structure on $\stmod(G,R)$ extends to $\StMod(G,R)$. We denote the corresponding finite localization by
\begin{equation}\label{eq:stmodquot}
    \xymatrix{\rho^* = \rho^*(G,R)\colon \Rep(G,R) \ar[r] & \StMod(G,R)}
\end{equation}
and write $\rho_*$ and $\rho^!$ for the corresponding inclusion and its right adjoint, respectively. In summary, we obtain a triple of adjoints $(\rho^*,\rho_*,\rho^!)$.
\end{Def}

\begin{Rem}\label{rem:stmodcomparison}
Other models for the integral stable module category have been given. For example, based on H.~Krause's stable derived category \cite{krause_stablederived}, Balmer and Gallauer \cite{balmergallauer_permutationmodulescohomsing} have studied a version that agrees with the homotopy category of $\StMod(G,R)$. Our choice follows previous work of Mathew \cite{treumannmathew_reps,mathew_torus} and A.~Krause \cite{krause_picard}. Indeed, the embedding $\iota$ considered above exhibits $\Perf(R[G])$ as the thick ideal generated by $R[G]$, so we obtain a canonical equivalence of tt-categories
\[
\stmod(G,R) \simeq \rep(G,R)/\Thickt{R[G]}.
\]
This recovers the definition of the stable module category given in \cite{krause_picard}, where even ring spectrum coefficients are permitted.
\end{Rem}

\begin{Exa}
If $R=k$ is a field, then $\StMod(G,k)$ coincides with the usual stable module category $\StMod(k[G])$ of $k[G]$.
\end{Exa}

Essentially by construction, we obtain a recollement relating the derived category of $R[G]$-modules, the category $\Rep(G,R)$, and the stable module category of $R$-linear $G$-representations.

\begin{Prop}\label{prop:mainrecollement}
The quotient functor $\Rep(G,R) \to \StMod(G,R)$ fits into a recollement
\[
\xymatrix{\StMod(G,R) \ar[r] & \Rep(G,R) \ar[r] \ar@<1ex>[l] \ar@<-1ex>[l] & \cat D(R[G]) \ar@<1ex>[l] \ar@<-1ex>[l]}
\]
of compactly generated stable $\infty$-categories.
\end{Prop}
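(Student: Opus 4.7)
The plan is to realize the desired recollement as the standard one associated to the compactly generated localizing subcategory $\cat K := \Loc\langle R[G]\rangle \subseteq \Rep(G,R)$, and then to identify $\cat K \simeq \cat D(R[G])$ via derived Morita theory. First, by \cref{rem:stmodcomparison} we have $\stmod(G,R) \simeq \rep(G,R)/\Thickt{R[G]}$; passing to ind-categories, the quotient $\rho^*\colon \Rep(G,R) \to \StMod(G,R)$ becomes the Bousfield localization whose kernel is precisely $\cat K$.

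Next, because $\cat K$ is compactly generated by the single compact object $R[G]$, it is a smashing subcategory of $\Rep(G,R)$: the inclusion $\cat K \hookrightarrow \Rep(G,R)$ admits both a left and a right adjoint, and likewise $\rho^*$ admits both a left and a right adjoint. This is the standard recollement associated to a compactly generated localizing subcategory in a presentable stable $\infty$-category, cf.\ \cite[\S7.2]{HALurie}. By the symmetry between the two ``sides'' of a recollement, one may display the data equivalently with $\cat K$ or $\StMod(G,R)$ on the left; the form in the proposition corresponds to the presentation with $\cat K$ on the right.

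It remains to identify $\cat K$ with $\cat D(R[G])$. Since $\cat K$ is compactly generated by the single compact object $R[G]$, the $\infty$-categorical Schwede--Shipley theorem (\cite[Thm.~7.1.2.1]{HALurie}) supplies an equivalence $\cat K \simeq \Mod_A$, where $A = \End_{\Rep(G,R)}(R[G])$ is the endomorphism $E_1$-$R$-algebra. Using $R[G] \simeq \ind_1^G R$ together with the restriction-induction adjunction, one computes $A \simeq \res_1^G\ind_1^G R \simeq R[G]$ as an $R$-module concentrated in degree $0$. The one point requiring care — and the hard part of the argument — is checking that the induced $E_1$-algebra structure on $A$ coincides with the usual multiplication on the group algebra $R[G]$; by discreteness this reduces to a comparison on $\pi_0$, where composition of endomorphisms is visibly the convolution product. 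Combining these three steps yields the recollement as stated.
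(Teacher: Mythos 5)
Your argument is correct and takes essentially the same route as the paper, which obtains the recollement by applying $\Ind$ to the inclusion $\Fun(BG,\cat D(R))^c \subseteq \Fun(BG,\cat D(R))^{\dual}$ (citing \cite[Prop.~2.2]{bhv2}), i.e.\ as the standard recollement of the finite localization killing $R[G]$. The only cosmetic difference is that you identify the kernel $\Loc\langle R[G]\rangle$ with $\cat D(R[G])$ via Schwede--Shipley and a computation of $\End(R[G])$, whereas the paper reads this off directly from $\Fun(BG,\cat D(R))^c \simeq \Perf(R[G])$.
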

\begin{proof}
As for example proven in \cite[Prop.~2.2]{bhv2}, this follows by applying ind-categories to the canonical inclusion $\Fun(BG,\cat D(R))^c \to \Fun(BG,\cat D(R))^{\dual}$.
\end{proof}

\begin{Rem}\label{rem:stmodnatural}
By construction, the category $\StMod(G,R)$ is a rigidly-compactly generated tt-category. Formally, the full subcategory of compact objects of the big stable module category $\StMod(G,R)$ is the idempotent completion of $\stmod(G,R)$. In contrast to the case of field coefficients, in general this completion can be non-trivial, i.e., $\stmod(G,R)$ can contain non-split idempotents (\cite[Rem.~4.3]{krause_picard}).
\end{Rem}

The stable module category of \cref{def:stmod} has also been studied in \cite{krause_picard}, see especially Sections 4 and 5. We highlight two observations from this paper which generalize characteristic properties of the stable module categories with field coefficients. To this end, we begin with a brief review of the Tate construction. 

Let $G$ be a finite group. Equipping a spectrum with the trivial $G$-action gives a functor
\[
\triv\colon \Sp \to \Fun(BG,\Sp).
\]
The functor preserves both limits and colimits and consequently admits a left and a right adjoint. These are given by homotopy orbits $(-)_{hG}$ and homotopy fixed points $(-)^{hG}$, respectively. Since $\Sp$ is stable, there is a natural norm transformation $\Nm\colon (-)_{hG} \to (-)^{hG}$. By definition, the cofiber is the $G$-Tate construction:
\[
\xymatrix{X_{hG} \ar[r]^-{\Nm} & X^{hG} \ar[r]^-q & X^{tG}}
\]
for any $X \in \Fun(BG,\Sp)$. The Tate construction $(-)^{tG}\colon \Fun(BG,\Sp) \to \Sp$ is lax symmetric monoidal, so in particular preserves algebra and module structures. Moreover, the natural map $q$ is lax symmetric monoidal (see \cite[\S I.3]{NikolausScholze18}). As expected, there are natural isomorphisms
\begin{equation}\label{eq:tatecohom}
\pi_*R^{hG} \cong H^*(G;R) \quad \text{and } \quad \pi_*R^{tG} \cong \tH^*(G;R)
\end{equation}
for any commutative ring $R$, viewed as a local system via the Eilenberg--MacLane spectrum functor. The next result (see \cite[Lem.~4.2]{krause_picard}) generalizes the fact that the mapping spaces in $\stmod(k[G])$ compute Tate cohomology if $k$ is a field.

\begin{Prop}[Krause]\label{prop:stmodtate}
For any two objects $M,N \in \stmod(G,R)$, there is a natural equivalence
    \[
    \Hom_{\stmod(G,R)}(M,N) \simeq \Hom_{\cat D(R)}(M,N)^{tG}.
    \]
Here, the right hand side denotes the $G$-Tate construction on the derived mapping spectrum $\Hom_{\cat D(R)}(M,N)$ with respect to the induced $G$-action. In particular, we have an equivalence $\End_{\stmod(G,R)}(R) \simeq R^{tG}$ as commutative ring spectra.
\end{Prop}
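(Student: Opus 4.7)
The plan is to leverage the Verdier quotient description $\stmod(G,R) \simeq \rep(G,R)/\Thickt{R[G]}$ from \cref{rem:stmodcomparison}. Since $R[G]$ is compact in $\rep(G,R)$, the induced localization $\rho^*\colon \Rep(G,R) \to \StMod(G,R)$ is smashing, giving an idempotent cofiber sequence $E \to \unit \to L$ in $\Rep(G,R)$ with $E \in \Loco{R[G]}$ and $L$ the $R[G]$-local unit. Tensoring with $N$ and mapping in from a compact $M \in \rep(G,R)$ produces a natural cofiber sequence of spectra
\begin{equation*}
\Hom_{\Rep(G,R)}(M, E \otimes N) \to \Hom_{\Rep(G,R)}(M, N) \to \Hom_{\stmod(G,R)}(M, N).
\end{equation*}

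Write $X := \Hom_{\cat D(R)}(M, N)$ for the derived mapping spectrum carrying its natural conjugation $G$-action; the standard identification of mapping spectra in local systems yields $\Hom_{\Rep(G,R)}(M, N) \simeq X^{hG}$. The goal is to match the displayed cofiber sequence with the Tate norm sequence $X_{hG} \to X^{hG} \to X^{tG}$. To pin down $E$ and $L$ concretely, I would invoke the equivariant Morita equivalence of \cref{thm:eqmoritarep}: under $\Rep(G,R) \simeq \Mod_{\Sp_G}(\borel{R})$, the compact generator $R[G]$ corresponds to $\borel{R} \otimes F(G_+, S^0_G)$, which is the $\borel{R}$-linear image of the ``free'' generator $G_+ \in \Sp_G$. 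Consequently the idempotent triangle $E \to \unit \to L$ is the $\borel{R}$-linear base change of the classical isotropy separation sequence $EG_+ \to S^0_G \to \widetilde{EG}$ in $\Sp_G$.

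With this identification in hand, $\Hom_{\Rep(G,R)}(M, L \otimes N)$ computes the genuine $G$-fixed points $(\widetilde{EG} \wedge X)^G$, which by the Nikolaus--Scholze characterization recalled in the text is precisely the Tate construction $X^{tG}$; dually, $\Hom_{\Rep(G,R)}(M, E \otimes N) \simeq (EG_+ \wedge X)^G \simeq X_{hG}$ by the Adams isomorphism applied to the free $G$-spectrum $EG_+ \wedge X$. Assembled together, these identifications transport the displayed cofiber sequence to the Tate norm sequence, yielding the required natural equivalence. I expect the main obstacle to be coherently organizing these three identifications as a single morphism of cofiber sequences, rather than as three disconnected equivalences; a more elementary alternative that avoids the genuine equivariant machinery is to resolve $E$ explicitly by the normalized bar complex $B_\bullet(R[G], R[G], R)$, apply $\Hom_{\rep(G,R)}(M, - \otimes N)$ termwise, and recognize the resulting geometric realization as $X_{hG}$ directly. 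Finally, the multiplicative refinement $\End_{\stmod(G,R)}(R) \simeq R^{tG}$ as commutative ring spectra follows from the symmetric monoidality of $\rho^*$ combined with the canonical lax symmetric monoidal refinement of the $G$-Tate construction noted in the excerpt.
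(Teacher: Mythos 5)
Your argument is essentially correct, but note first that the paper does not prove this proposition at all: it is quoted from Krause's \emph{op.\ cit.}\ (Lemma~4.2 of the cited paper on Picard groups), whose own argument is close in spirit to your ``elementary alternative'' --- identify the fibre of $\Hom_{\Rep(G,R)}(M,N)\to\Hom_{\stmod(G,R)}(M,N)$ with $X_{hG}$ via the cellular/bar resolution of the colocalization idempotent for $\Loco{R[G]}$, and recognize the resulting map $X_{hG}\to X^{hG}$ as the norm. Your cofiber-sequence setup is sound: the localization is finite, hence smashing, $\Hom_{\stmod(G,R)}(M,N)\simeq\Hom_{\Rep(G,R)}(M,L\otimes N)$, and $\Hom_{\Rep(G,R)}(M,N)\simeq X^{hG}$ since $M,N$ lie in $\rep(G,R)=\Fun(BG,\Perf(R))$. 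The coherence issue you flag is not a real obstacle: once $F_{\borel R}(M,N)$ is identified as a Borel-complete genuine $G$-spectrum with underlying spectrum $X$, applying genuine fixed points to the isotropy separation sequence smashed with it \emph{is} the Greenlees--May/Nikolaus--Scholze construction of the norm sequence $X_{hG}\to X^{hG}\to X^{tG}$, so the three identifications come packaged as a single morphism of cofiber sequences. The one genuine caveat is that your primary route runs through \cref{thm:eqmoritarep}, which requires $R$ regular (it rests on generation of $\rep(G,R)$ by permutation modules), whereas the proposition carries no such hypothesis and Krause proves it even for ring-spectrum coefficients. So for general $R$ you must fall back on your bar-complex alternative (or, equivalently, work directly with the colocalization $E=|B_\bullet(R[G],R[G],R)|$ in $\Rep(G,R)$, using that the compact object $M$ commutes with geometric realizations); that version is both more elementary and fully general, and I would make it the main argument rather than the fallback. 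The multiplicative refinement is fine as stated: $\End_{\stmod(G,R)}(R)=\Gamma(L)$ for the commutative algebra $L=f_{R[G]}\unit$, and the comparison with $R^{tG}$ respects the lax symmetric monoidal structures.
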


This implies an integral version of Maschke's theorem:

\begin{Cor}\label{cor:stmodcharacteristic}
If the order of $G$ is invertible in $R$, then $\StMod(G,R)$ is trivial. 
\end{Cor}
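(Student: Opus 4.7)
The plan is to show that every mapping spectrum in $\stmod(G,R)$ is contractible; since $\StMod(G,R) = \Ind\stmod(G,R)$, it will then follow that $\StMod(G,R) \simeq 0$. By the preceding proposition, for $M,N \in \stmod(G,R)$ we have
\[
\Hom_{\stmod(G,R)}(M,N) \simeq \Hom_{\cat D(R)}(M,N)^{tG},
\]
where $\Hom_{\cat D(R)}(M,N)$ is an $R$-module spectrum with induced $G$-action. It therefore suffices to show that $X^{tG} \simeq 0$ whenever $X$ is an $R$-module spectrum carrying a $G$-action, under the standing hypothesis that $|G|$ is invertible in $R$.

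The first step is to observe that $R^{tG} \simeq 0$. By~\eqref{eq:tatecohom}, $\pi_* R^{tG} \cong \tH^{*}(G;R)$. A classical property of Tate cohomology is that $|G|$ annihilates $\tH^{*}(G;M)$ for every $\bbZ[G]$-module $M$; combined with the hypothesis that $|G|$ is invertible in $R$, this forces $\tH^{*}(G;R) = 0$, whence $R^{tG} \simeq 0$.

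For a general $R$-module spectrum $X$ with $G$-action, the lax symmetric monoidal structure of the Tate construction equips $X^{tG}$ with a module structure over the commutative ring spectrum $R^{tG}$, via the pairing
\[
R^{tG} \otimes X^{tG} \too (R \otimes X)^{tG} \simeq X^{tG}.
\]
Since $R^{tG} \simeq 0$, we conclude $X^{tG} \simeq 0$. Taking $X = \Hom_{\cat D(R)}(M,N)$ and invoking the preceding proposition yields $\Hom_{\stmod(G,R)}(M,N) \simeq 0$ for all $M,N$, hence $\stmod(G,R) = 0$. The only input beyond Krause's identification of the mapping spectra is the classical vanishing of Tate cohomology with $|G|$-invertible coefficients, so the corollary is essentially formal, with no serious obstacle to anticipate.
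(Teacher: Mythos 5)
Your proof is correct and follows essentially the same route as the paper: both rest on Krause's identification $\Hom_{\stmod(G,R)}(M,N) \simeq \Hom_{\cat D(R)}(M,N)^{tG}$ together with the fact that $|G|$ annihilates Tate cohomology. The paper is slightly more economical—it only computes $\pi_0\End(\unit) \cong R/|G| = 0$, concludes $\unit \simeq 0$, and lets the tensor structure kill everything else—whereas you kill all mapping spectra directly via the $R^{tG}$-module structure, but this is a cosmetic difference.
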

\begin{proof}
Let $n$ be the order of $G$ and write $\unit = R$ for the unit in $\StMod(G,R)$. It follows from \cref{prop:stmodtate} and the definition of Tate cohomology that $\pi_0\unit \cong R/n$. If $n$ is invertible in $R$, then $\unit \simeq 0$, so $\StMod(G,R) = 0$.
\end{proof}

We conclude this subsection with a consequence of \cref{thm:eqmoritarep}:

\begin{Cor}\label{cor:eqmoritastmod}
If $R$ is regular, $\StMod(G,R)$ is compactly generated by the (images of the) permutation modules $R[G/H]$ for $H$ ranging through the subgroups of~$G$. Moreover, there is a symmetric monoidal equivalence
\[
\Mod_{\Sp_G}(\borel{R} \otimes \widetilde{EG}) \simeq \StMod(G,R),
\]
where $\widetilde{EG}$ is the cofiber of the canonical map $EG_+ \to S^0$. 
\end{Cor}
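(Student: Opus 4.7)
The plan is to deduce both claims from the symmetric monoidal equivalence $\Rep(G,R)\simeq \Mod_{\Sp_G}(\borel{R})$ of \cref{thm:eqmoritarep}, together with the recollement of \cref{prop:mainrecollement}. For the first claim, recall from \cref{rem:stmodcomparison} that the localization $\rho^*\colon \Rep(G,R)\to \StMod(G,R)$ has kernel $\Loco{R[G]}=\cat D(R[G])$ and is a smashing (in particular cocontinuous) tensor functor. Since the permutation modules $\{R[G/H]\}_{H\leq G}$ compactly generate $\Rep(G,R)$ by \cref{thm:eqmoritarep} (or directly by \eqref{eq:permmodulesgenerate}), their images compactly generate $\StMod(G,R)$ by the standard result for quotients of compactly generated tt-categories by compactly generated localizing tensor ideals; the only collapse is $\rho^*(R[G/1])\simeq 0$.

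For the second claim, I will realize the desired equivalence as a smashing localization of the Morita equivalence. Tensoring the defining cofiber sequence $EG_+\to S^0\to \widetilde{EG}$ in $\Sp_G$ with $\borel{R}$ produces an idempotent triangle $\borel{R}\otimes EG_+\to \borel{R}\to \borel{R}\otimes \widetilde{EG}$ in $\CAlg(\Mod_{\Sp_G}(\borel{R}))$, and hence a smashing symmetric monoidal localization $\Mod_{\Sp_G}(\borel{R})\to \Mod_{\Sp_G}(\borel{R}\otimes \widetilde{EG})$ whose kernel is the localizing tensor ideal $\Loco{\borel{R}\otimes EG_+}$. It suffices to show that under the Morita equivalence this kernel corresponds to $\Loco{R[G]}$. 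Since the finite $G$-set $G$ is self-dual, $F(G_+,S^0_G)\simeq G_+$ in $\Sp_G$, so by \cref{thm:eqmoritarep} the object $\borel{R}\otimes G_+$ corresponds to $R[G]$; thus it is enough to prove $\Loco{\borel{R}\otimes EG_+}=\Loco{\borel{R}\otimes G_+}$ in $\Mod_{\Sp_G}(\borel{R})$. The inclusion $EG_+\in \Loco{G_+}$ in $\Sp_G$ is immediate from the free $G$-CW structure on $EG$, while the reverse inclusion follows from $G_+\wedge \widetilde{EG}\simeq 0$; tensoring with $\borel{R}$ then transports the equality to $\Mod_{\Sp_G}(\borel{R})$.

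The main obstacle I anticipate is the clean verification of $G_+\wedge \widetilde{EG}\simeq 0$ and of the monoidal compatibility of the identification of kernels. The vanishing can be established by observing that $G_+\wedge \widetilde{EG}$ is a free $G$-spectrum, hence Borel, and its underlying spectrum is $\widetilde{EG}^e$, which is contractible because $EG^e$ is contractible and so $EG_+\to S^0$ is a nonequivariant equivalence. The monoidal upgrade is then automatic: the equivalence of \cref{thm:eqmoritarep} and the two smashing localizations are all symmetric monoidal, so matching their kernels as tensor ideals produces a symmetric monoidal equivalence $\Mod_{\Sp_G}(\borel{R}\otimes \widetilde{EG})\simeq \StMod(G,R)$ as required.
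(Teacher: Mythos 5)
Your proposal is correct and fills in exactly the argument the paper leaves implicit (the corollary is stated there without proof as a direct consequence of \cref{thm:eqmoritarep}): identify $\StMod(G,R)$ as the finite localization of $\Rep(G,R)\simeq\Mod_{\Sp_G}(\borel{R})$ killing $\Loco{R[G]}=\Loco{\borel{R}\otimes G_+}=\Loco{\borel{R}\otimes EG_+}$, whose complementary smashing localization is $\Mod_{\Sp_G}(\borel{R}\otimes\widetilde{EG})$, and invoke Neeman--Thomason to transport compact generators to the quotient. The only cosmetic slip is calling the isotropy separation triangle an idempotent triangle ``in $\CAlg$''; it is an idempotent triangle in $\Mod_{\Sp_G}(\borel{R})$ whose third term $\borel{R}\otimes\widetilde{EG}$ is the idempotent commutative algebra, which is all you need.
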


For general $R$, we may thus consider $\Mod_{\Sp_G}(\borel{R} \otimes \widetilde{EG})$ as a suitable homotopical variant of the stable module category. Note that the endomorphism of the unit object in both categories are given by the Tate construction $R^{tG}$.

\subsection{The stable module category II: the algebraic model}

The next result concerns the comparison of the stable module with the additive category $\projmod(G,R)$ of $R[G]$-lattices, i.e., $R[G]$-modules whose underlying $R$-module is projective. Equip this category with the exact structure obtained by pulling back the split exact structure on the category $\Proj(R)$ of projective $R$-modules:

\begin{Def}\label{def:exactstructure}
A sequence in $\projmod(G,R)$ is declared to be \emph{exact} if its restriction to $\Proj(R)$ is split exact. 
\end{Def}

In other words, an exact sequence in $\projmod(G,R)$ is exact in the sense of \cref{def:exactstructure} if it is exact in $\Mod(R[G])$ with respect to the $R$-split exact structure.

\begin{Rec}
An $R[G]$-module $M$ is said to be \emph{weakly projective} (resp.~\emph{weakly injective}) if the functor $\Hom_{RG}(M,-)$ (resp.~$\Hom_{RG}(-,M)$) preserves exactness of $R$-split exact sequences. 
\end{Rec}

In light of \cite[Sec.~2]{BensonIyengarKrause13}, the next result is thus not surprising:

\begin{Lem}\label{lem:exactstructure}
Equipped with the exact sequences of \cref{def:exactstructure}, the category $\projmod(G,R)$ forms an exact category. Moreover, the following conditions on an object $M \in \projmod(G,R)$ are equivalent:
    \begin{enumerate}
        \item $M$ is projective with the respect to this exact structure on $\projmod(G,R)$.
        \item $M$ is weakly projective.
        \item $M$ is weakly injective.
        \item $M$ is injective with the respect to this exact structure on $\projmod(G,R)$.
    \end{enumerate}
In particular, $\projmod(G,R)$ is a Frobenius category.
\end{Lem}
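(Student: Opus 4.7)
The plan is to pull back the $R$-split exact structure on $\Mod(R[G])$ to $\projmod(G,R)$ and then identify the projective objects with the weakly projective modules via the standard (co)induction resolutions.

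First, recall that $\Mod(R[G])$ carries the $R$-split exact structure, whose conflations are those short exact sequences that admit an $R$-linear (not necessarily $G$-equivariant) splitting; this is the exact structure employed in \cite{BensonIyengarKrause13}. I would observe that $\projmod(G,R)$ is closed under $R$-split extensions: if $0\to A\to B\to C\to 0$ is $R$-split with $A,C$ projective over $R$, then $B\cong A\oplus C$ as an $R$-module, so $B\in\projmod(G,R)$. Extension closure implies that the restricted class of sequences forms an exact structure, proving the first assertion. Note moreover that the admissible epimorphisms in $\projmod(G,R)$ are exactly the $R$-split surjections of $R[G]$-modules whose source and target are $R$-projective, since the kernel then inherits $R$-projectivity automatically.

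Next, I would identify sufficiently many weakly projective and weakly injective objects. The $\res\dashv \ind$ and $\coind\dashv \res$ adjunctions give natural isomorphisms
\[ \Hom_{R[G]}(R[G]\otimes_R N, X)\cong \Hom_R(N,X)\cong \Hom_{R[G]}(X,\Hom_R(R[G],N)), \]
and any $R$-split short exact sequence of $R[G]$-modules splits as a sequence of $R$-modules, hence is preserved by $\Hom_R(N,-)$ and by $\Hom_R(-,N)$ for every $R$-module $N$. Thus $R[G]\otimes_R N$ is weakly projective and $\Hom_R(R[G],N)$ is weakly injective for every $N$. Because $G$ is finite, $R[G]$ is a symmetric Frobenius $R$-algebra: the trace pairing $R[G]\otimes_R R[G]\to R$ sending $g\otimes h$ to $1$ if $gh=1$ and to $0$ otherwise induces an isomorphism of $R[G]$-bimodules $R[G]\cong \Hom_R(R[G],R)$, whence $R[G]\otimes_R N\cong \Hom_R(R[G],N)$ naturally in $N$. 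This already identifies conditions (b) and (c).

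To connect (a) and (d), I would use the canonical (co)induction resolutions. For $M\in\projmod(G,R)$ with underlying $R$-module $M_0$, the counit $\pi_M\colon R[G]\otimes_R M_0\to M$, $g\otimes m\mapsto gm$, is an $R[G]$-linear surjection that is $R$-split by $m\mapsto 1\otimes m$; its kernel is therefore $R$-projective, so $\pi_M$ is an admissible epimorphism onto $M$ from a weakly projective module. If $M$ satisfies (a), then $\pi_M$ splits in $\projmod(G,R)$, realizing $M$ as a summand of a weakly projective module and yielding (b). The converse (b)$\Rightarrow$(a) is trivial since admissible epimorphisms are $R$-split. The dual argument, with $M\hookrightarrow \Hom_R(R[G],M_0)$ given by $m\mapsto (g\mapsto gm)$, proves (c)$\Leftrightarrow$(d). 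These two resolutions also show that the exact category has enough projectives and enough injectives, and by the equivalence (b)$\Leftrightarrow$(c) the two classes coincide, so $\projmod(G,R)$ is Frobenius. The main subtlety is verifying that $\pi_M$ qualifies as an admissible epimorphism in the sense of \cref{def:exactstructure}, i.e.\ that its kernel is $R$-projective; once this is in hand, the remainder is formal manipulation of (co)induction adjunctions and of the Frobenius structure of $R[G]$.
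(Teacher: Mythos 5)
Your proof is correct, but it takes a genuinely more self-contained route than the paper's. The paper disposes of the exact-structure axioms by analogy with the $R$-split structure on $\Mod(R[G])$ and then outsources the substantive equivalences to \cite[Thm.~2.6]{BensonIyengarKrause13}: that theorem is invoked both for the characterization of weak projectivity via split surjectivity of $R[G]\otimes_R M \to M$ (which gives (a) $\Rightarrow$ (b)) and for the equivalence (b) $\Leftrightarrow$ (c). You instead reprove everything from scratch: extension-closure of $\projmod(G,R)$ inside $\Mod(R[G])$ with its $R$-split structure yields the exact category; the (co)induction adjunctions show that induced modules are weakly projective and coinduced ones weakly injective; and the symmetric Frobenius isomorphism $R[G]\cong\Hom_R(R[G],R)$ identifies the two classes --- essentially Higman's criterion. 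The steps (b) $\Rightarrow$ (a) (admissible epimorphisms are $R$-split) and (a) $\Rightarrow$ (b) (split $\pi_M$ to realize $M$ as a retract of an induced module) coincide with the paper's. Your version costs a little more writing but removes the external dependency and, unlike the paper, explicitly records that there are enough projectives and enough injectives before concluding Frobenius.

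One small slip, not a gap: the displayed chain $\Hom_{R[G]}(R[G]\otimes_R N, X)\cong \Hom_R(N,X)\cong \Hom_{R[G]}(X,\Hom_R(R[G],N))$ is not a single chain of natural isomorphisms, since the coinduction adjunction gives $\Hom_{R[G]}(X,\Hom_R(R[G],N))\cong\Hom_R(X,N)$, with the arguments in the opposite order. This does not affect the argument --- you only use the two adjunctions separately to see that $\Hom_{R[G]}(R[G]\otimes_R N,-)$ and $\Hom_{R[G]}(-,\Hom_R(R[G],N))$ preserve $R$-split exactness --- but the display should be broken into two independent isomorphisms.
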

\begin{proof}
The verification of the axioms for an exact structure is similar to the one for the $R$-split exact structure on $\Mod(R[G])$. It remains to show the equivalence of Conditions $(a)$--$(d)$. We first prove $(a) \iff (b)$. The implication $(b) \implies (a)$ holds because the exact sequences in $\projmod(G,R)$ are in particular $R$-exact. Conversely, according to \cite[Thm.~2.6]{BensonIyengarKrause13}, a module $M$ is weakly projective if and only if the natural map $M\hspace{-1ex}\uparrow^G \to M$ is split surjective. If $M$ is projective with respect to the structure of \cref{def:exactstructure}, then the identity map on $M$ lifts to the desired splitting, thus $M$ is weakly projective. The proof of $(c) \iff (d)$ is dual, while \cite[Thm.~2.6]{BensonIyengarKrause13} gives $(b) \iff (c)$, thus finishing the proof. 
\end{proof}

For two $R[G]$-modules $M$ and $N$, we say that two morphisms $f,g\colon M \to N$ are homotopic if their difference factors through a coproduct of copies of $R[G]$. The corresponding equivalence relation will be denoted by $\sim$. In light of \cref{lem:exactstructure}, the resulting quotient category is equivalent to the homotopy category of the exact Quillen model structure on $\projmod(G,R)$ associated to the exact structure of \cref{def:exactstructure}:
\begin{equation}\label{eq:frobenius}
    \xymatrix{\projmod(G,R)/\sim \ar[r]^-{\simeq} & \Ho(\projmod(G,R)).}
\end{equation}
Write $\projmod(G,R)^{\fp}$ for the full subcategory of $\projmod(G,R)$ on those $R[G]$-modules which are finitely presented. Both the symmetric monoidal structure and the exact structure restrict, as does the homotopy relation $\sim$.

With this preparation, the following proposition is now essentially a consequence of \cite[Section 4]{krause_picard}. A similar argument is given for field coefficients in \cite{mathew_torus}, in which case the result recovers the classical definition of the stable module category.

\begin{Prop}\label{prop:algebraicmodel}
The composite functor
\begin{equation}\label{eq:smallalgmodelfunctor}
\projmod(G,R)^{\fp} \simeq \Fun(BG,\Proj(R)) \hookrightarrow \Fun(BG,\Perf_R) \twoheadrightarrow \stmod(G,R)
\end{equation}
exhibits the codomain as the quotient of the domain by the homotopy relation $\sim$. Considering the $\infty$-categorical localization, we obtain a symmetric monoidal equivalence of $\infty$-categories
\begin{equation}\label{eq:algmodelfunctor}
    \xymatrix{\big(\projmod(G,R)/\sim\big) \ar[r]^-{\sim} & \StMod(G,R).}
\end{equation}
\end{Prop}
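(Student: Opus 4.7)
The plan is to invoke the general theory of symmetric monoidal Frobenius categories and then extract the comparison with $\stmod(G,R)$ from Krause's Tate formula. By \cref{lem:exactstructure}, $\projmod(G,R)$ is Frobenius, and its $R$-linear symmetric monoidal structure is exact in each variable because tensoring over $R$ with an $R$-projective preserves $R$-split sequences. The associated Dwyer--Kan localization (equivalently, the underlying $\infty$-category of the corresponding exact Quillen model structure) is therefore a symmetric monoidal stable $\infty$-category whose homotopy category is $\projmod(G,R)/\sim$ by \eqref{eq:frobenius}.

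Next, I would construct the comparison functor \eqref{eq:smallalgmodelfunctor}. Viewing an $R[G]$-lattice as a local system in degree zero gives the inclusion $\projmod(G,R)^{\fp} \hookrightarrow \rep(G,R)$, and any $R$-split short exact sequence of lattices yields a cofiber sequence in $\rep(G,R)$ since the underlying sequence of $R$-complexes has contractible cone. Moreover, by \cref{lem:exactstructure} every weakly projective $R[G]$-module is a summand of an induced module $R[G] \otimes_R M$ with $M$ finitely generated $R$-projective; such induced modules lie in $\Thickt{R[G]}$, so they are annihilated by the Verdier quotient $\rep(G,R) \twoheadrightarrow \stmod(G,R)$. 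By the universal property of the Frobenius localization, this produces a symmetric monoidal exact functor $\Phi\colon (\projmod(G,R)^{\fp}/\sim) \to \stmod(G,R)$.

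Full faithfulness is then a direct comparison of mapping spectra. For lattices $M,N$ concentrated in degree zero, \cref{prop:stmodtate} gives
\[
\pi_n \Map_{\stmod(G,R)}(M,N) \cong \pi_n \Hom_R(M,N)^{tG} \cong \widehat{H}^{-n}\bigl(G; \Hom_R(M,N)\bigr),
\]
the last isomorphism because $\Hom_R(M,N)$ is discrete (using that $M$ is $R$-projective). On the Frobenius side, morphisms in the stable $\infty$-category of a Frobenius category are computed by complete projective resolutions of $M$ with respect to the $R$-split exact structure; the resulting Tate Ext groups $\widehat{\Ext}^{-n}_{R[G]}(M,N)$ coincide with $\widehat{H}^{-n}(G; \Hom_R(M,N))$, because $R$-projectivity of $M$ yields $\mathrm{R}\Hom_{R[G]}(M,N) \simeq \mathrm{R}\Gamma(G; \Hom_R(M,N))$ and this identification passes to the Tate construction. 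Hence $\Phi$ is fully faithful on homotopy groups of all mapping spaces.

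For essential surjectivity, the image of $\Phi$ is a stable tensor-subcategory of $\stmod(G,R)$ containing all permutation modules $R[G/H]$, since these are $R$-free and hence lattices. When $R$ is regular, Mathew's theorem \eqref{eq:permmodulesgenerate} ensures that they generate $\rep(G,R)$ as a thick $\otimes$-subcategory, so their images generate $\stmod(G,R)$; idempotent completion of the source then gives the equivalence on compacts, while the general Noetherian case proceeds along the same lines as \cite[\S4]{krause_picard}. Passing to ind-completions transports this to the claimed symmetric monoidal equivalence $(\projmod(G,R)/\sim) \xrightarrow{\sim} \StMod(G,R)$, with monoidality automatic from the compatibility of the pointwise tensor structures. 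The main obstacle is the identification of the higher homotopy groups of mapping spaces on the Frobenius side with Tate cohomology; once the complete-resolution computation is established, the remaining steps---matching tensor structures and accounting for idempotent completions---are formal.
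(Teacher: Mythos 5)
Your route is genuinely different from the paper's: the paper disposes of this proposition in three lines by citing Krause's Lemmas 4.7 and 4.9 from \cite{krause_picard} (reduction to full faithfulness, and the identification of mapping spectra) and Mathew for the passage to ind-categories, whereas you unpack the content. Your full-faithfulness step is sound in outline and is essentially the substance of Krause's Lemma 4.9: both sides compute $\widehat{H}^{*}(G;\Hom_R(M,N))$, on the $\stmod$ side via \cref{prop:stmodtate} and on the Frobenius side via complete resolutions by induced modules for the $R$-split exact structure. The one thing to be careful about there is that you must check that the map induced by $\Phi$ \emph{is} the canonical comparison of Tate constructions, not merely that the two graded groups are abstractly isomorphic; as written you assert rather than verify this.

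The genuine gap is in essential surjectivity. Generation by the permutation modules $R[G/H]$ only shows that the \emph{thick} closure of the image of $\Phi$ is all of $\stmod(G,R)$, and thick closure includes retracts; so this argument identifies $\stmod(G,R)$ with the target only after idempotent completion of the source. But \cref{rem:stmodnatural} records that $\stmod(G,R)$ is in general \emph{not} idempotent complete, so "equivalence on idempotent completions" does not yield the first assertion of the proposition, namely that \eqref{eq:smallalgmodelfunctor} exhibits $\stmod(G,R)$ itself as $\projmod(G,R)^{\fp}/\sim$. (Your argument also quietly imports regularity of $R$ through \eqref{eq:permmodulesgenerate}, which the statement does not assume.) What is actually needed is the syzygy argument: every dualizable object of $\Fun(BG,\cat D(R))$ becomes equivalent, after killing $\Perf(R[G])$, to a lattice concentrated in degree $0$, by resolving by induced modules $R[G]\otimes_R Q$ and truncating; this gives essential surjectivity on the nose and is what the citation of \cite[Lem.~4.7]{krause_picard} supplies. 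Your big-category statement \eqref{eq:algmodelfunctor} does survive your argument (idempotent completion is absorbed by $\Ind$), modulo the further identification of $\Ind\big((\projmod(G,R)^{\fp}/\sim)^{\natural}\big)$ with $\projmod(G,R)/\sim$, which you, like the paper, defer to \cite{mathew_torus}.
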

\begin{proof}
Due to \cite[Lem.~4.7]{krause_picard}, it suffices to show that the functor \eqref{eq:smallalgmodelfunctor} is fully faithful. This follows from \cref{lem:exactstructure} combined with \cite[Lem.~4.9]{krause_picard}. The statement about big categories then follows by passing to ind-categories via \cref{rec:indcat}, as in the proof of \cite[Thm.~2.4]{mathew_torus}.
\end{proof}

\begin{Rem}
A version of the stable module category for a finite group with coefficients in a commutative ring has also been constructed and studied by Benson, Iyengar, and Krause in \cite{BensonIyengarKrause13}, via more homological methods. It would be great to clarify the relation between the different versions of the stable module category. 
\end{Rem}

\subsection{The Balmer spectrum of integral representations}\label{ssec:lau}

Via \cref{rem:repmodels}, the Balmer spectrum of $\rep(G,R)$ for regular $R$ has recently been computed by Lau \cite{lau_spcdmstacks}. His theorem forms one of the key ingredients in our approach to the stratification of the integral stable module category.

If $R$ is Noetherian, then so is the graded commutative cohomology ring $H^*(G;R)$, see \cite{evens,golod,venkov}. We write $\Spec^h(H^*(G;R))$ for the homogeneous Zariski spectrum of $H^*(G;R)$. The cohomology ring of any object $M \in \rep(G,R)$ has the structure of a module over $H^*(G;R)$, and we define its \emph{cohomological support}\footnote{This terminology follows Lau~\cite{lau_spcdmstacks}, which differs from the one used in \cite{BensonIyengarKrause08}; in the latter source, the authors refer to this notion of support as the triangulated support.} as
\[
\csupp(M) = \SET{\frak p \in \Spec^h(H^*(G;R))}{\End(M)^*_{\frak p} \neq 0}.
\]

\begin{Thm}[Lau]\label{thm:lau}
Let $R$ be a regular Noetherian ring and $G$ a finite group, then the comparison map  
\begin{equation}\label{eq:laucomparisonmap}
\xymatrix{\Spc(\rep(G,R)) \ar[r]^-{\simeq} & \Spec^h(H^*(G;R))}
\end{equation}
is a homeomorphism. Under this homeomorphism, the Balmer support of any object $M \in \rep(G,R)$ gets identified with its cohomological support.
\end{Thm}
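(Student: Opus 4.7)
The plan is to analyze Balmer's comparison map
\[
\rho\colon \Spc(\rep(G,R)) \to \Spec^h(\End^*_{\rep(G,R)}(R)) = \Spec^h(H^*(G;R))
\]
and show it is a homeomorphism; the identification of cohomological support with Balmer support will then follow automatically from the universality of $\Spc$. Noetherianness of $H^*(G;R)$ (Evens--Venkov--Golod) guarantees that the target is a Noetherian spectral space, and a general theorem of Balmer ensures that $\rho$ is continuous and surjective under this hypothesis, so the real work lies in injectivity and openness.

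The geometric interpretation $\rep(G,R) \simeq \Perf([\Spec(R)/G])$ from \cref{rem:repmodels} motivates two complementary reductions. First, since $\rep(G,R)$ is generated by the permutation modules $\{R[G/H]\}$ by \eqref{eq:permmodulesgenerate}, restriction along subgroups together with a derived form of Chouinard's theorem reduces the problem to elementary abelian $p$-subgroups for the primes $p$ dividing $|G|$. Second, the arithmetic fracture square for a regular Noetherian ring $R$ allows one to localize in the coefficients, passing to each prime $\frakp \subset R$ and, after completion, to the case where $R$ is a complete regular local ring with residue field $k(\frakp)$.

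Combining these reductions, the essential case is $\rep(E,k)$ for $E = (\bbZ/p)^{\times r}$ and $k$ a field of characteristic $p$, where $\rho$ being a homeomorphism is the classical theorem of Benson--Carlson--Rickard; its injectivity relies on Carlson's explicit construction of $\kappa$-modules realizing closed subsets of $\Spec^h(H^*(E;k))$. The main obstacle in the present relative setting is to propagate this fieldwise statement back to general regular $R$: one must glue the fiberwise homeomorphisms over $\Spec(R)$, compatibly with the Quillen-type stratification
\[
\Spec^h(H^*(G;R)) \;\cong\; \colim_{E \leq G}\Spec^h(H^*(E;R))
\]
appearing later as \cref{cor:quillenstratification}, and check compatibility with Balmer's comparison map throughout the \'etale and Zariski descent for the cover $\Spec(R) \to [\Spec(R)/G]$. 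Once $\rho$ is known to be a homeomorphism, the equality of Balmer support and cohomological support is immediate: cohomological support defines a tt-support datum in Balmer's sense, and $(\Spc(\rep(G,R)),\supp)$ is the universal such.
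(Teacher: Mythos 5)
This theorem is not proved in the paper at all: it is imported as a black box from Lau's work \cite{lau_spcdmstacks}, so there is no internal argument to compare against, and your proposal has to stand on its own as a reconstruction. As written it does not, for two reasons. First, it is circular relative to the logical structure of this paper. Your reduction to elementary abelian subgroups, even if carried out (via the derived Chouinard theorem and Balmer's descent machinery for separable extensions), only identifies $\Spc(\rep(G,R))$ with $\colim_{\orbit_{\cat E}(G)}\Spc(\rep(E,R))$; to land in $\Spec^h(H^*(G;R))$ you then invoke the Quillen-type stratification $\Spec^h(H^*(G;R))\cong\colim_E\Spec^h(H^*(E;R))$, citing \cref{cor:quillenstratification}. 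But that corollary is itself \emph{deduced from} \cref{thm:lau} in this paper, and the author explicitly remarks that an independent proof of integral Quillen stratification in this generality is not known. So the step that converts the colimit of elementary abelian spectra into $\Spec^h(H^*(G;R))$ is precisely the content you are trying to establish, not an available input.

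Second, the coefficient reduction is where the real work lives, and the proposal leaves it as ``one must glue the fiberwise homeomorphisms over $\Spec(R)$.'' Passing from a complete regular local ring $A$ of mixed characteristic to its residue field $k$ is not a formal localization: $H^*(G;A)\otimes_A k\to H^*(G;k)$ is not an isomorphism in general, and the comparison of homogeneous spectra requires an argument. The paper's own version of this step (\cref{lem:quotmapelabcohom}) works only for elementary abelian $E$ and only because $p\cdot H^{*>0}(E;A)=0$, which lets one identify $\Spec^h(H^*(E;A))$ with $\Spec^h(H^*(E;k))$ plus the single generic point $\eta_K$ via an $F$-isomorphism; no such statement is available for general $G$ or for higher-dimensional regular local rings appearing in your fracture-square reduction. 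Relatedly, the final claim that the support identification is ``immediate from universality'' presupposes that cohomological support is a classifying support datum, which requires the tensor-product property $\csupp(M\otimes N)=\csupp(M)\cap\csupp(N)$ and a realization statement---both nontrivial. Lau's actual argument proceeds through the geometry of the stack $[\Spec(R)/G]$ rather than through the descent-and-gluing scheme you outline; if you want a proof along your lines, you must first supply an independent proof of integral Quillen stratification and a genuine comparison of $\Spec^h(H^*(G;-))$ across the residue fields of $R$.
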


Restriction to the unit element $e \in G$ provides a canonical map $H^*(G;R) \to H^*(e;R) \cong R$, which in turn gives rise to an inclusion $\Spec(R) \hookrightarrow \Spec^h(H^*(G;R))$. We will denote the complement of the image by $\Spec^h(H^*(G;R))\setminus\Spec(R)$. Note that, if $R = k$ is a field, then this complement identifies with the projective variety $\Proj(H^*(G;k))$, as considered in \cite{BensonIyengarKrause11a}.

\begin{Cor}\label{cor:spcstmod}
Let $R$ be a regular Noetherian ring and $G$ a finite group. There is a natural homeomorphism
\[
\Spc(\stmod(G,R)) \cong \left(\Spec^h(H^*(G;R))\setminus\Spec(R)\right)
\]
compatible with the comparison map \eqref{eq:laucomparisonmap}. In particular, there is a bijection
\[
\Supp\colon
\begin{Bmatrix}
\text{Thick tensor ideals} \\
\text{of } \cat \stmod(G, R)
\end{Bmatrix} 
\xymatrix@C=2pc{ \ar[r]^-{\sim} &}
\begin{Bmatrix}
\text{Specialization closed subsets}  \\
\text{of }\Spec^h(H^*(G; R))\setminus\Spec(R)
\end{Bmatrix}.
\]
\end{Cor}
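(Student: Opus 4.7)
The strategy is to combine Lau's \cref{thm:lau} with the tt-geometric description of Balmer spectra of Verdier quotients. From \cref{rem:stmodcomparison} one has $\stmod(G,R) \simeq \rep(G,R)/\Thickt{R[G]}$, and the Verdier quotient of a rigid tt-category $\cat K$ by a thick tensor ideal $\cat J$ has Balmer spectrum canonically identified with $\Spc(\cat K) \setminus \supp(\cat J)$ (see \cite{Balmer05a}). Thus the first claim reduces to identifying $\supp(R[G])$ under Lau's homeomorphism $\Spc(\rep(G,R)) \cong \Spec^h(H^*(G;R))$, which in turn coincides with the cohomological support $\csupp(R[G])$.

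To carry out this identification, I would use the induction-restriction adjunction. Since $R[G] \simeq \ind_e^G R$, Shapiro's lemma provides $\Hom_{\rep(G,R)}(R[G],M) \simeq \res^G_e M$ for any $M \in \rep(G,R)$, so that $\End^*(R[G])$ is concentrated in degree zero and isomorphic to $R[G]$ as an $R$-module. The $H^*(G;R)$-module structure on $\End^*(R[G])$ is induced by $\alpha \mapsto \alpha \otimes \id_{R[G]}$, hence classes of positive cohomological degree act as zero, while $H^0(G;R) = R$ acts through the standard scalar multiplication on the free module $R[G]$. Consequently the annihilator is precisely the augmentation ideal $H^{>0}(G;R)$, giving
\[
\csupp(R[G]) \;=\; V(H^{>0}(G;R)),
\]
which is exactly the image of the closed immersion $\Spec(R) \hookrightarrow \Spec^h(H^*(G;R))$ induced by restriction to the trivial subgroup. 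Combining with the previous paragraph yields the first homeomorphism, and the naturality of the construction guarantees its compatibility with the comparison map \eqref{eq:laucomparisonmap}.

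For the bijective correspondence of thick tensor ideals, the plan is to invoke Balmer's classification \eqref{eq:balmerclassification} applied to the rigid tt-category $\stmod(G,R)$. Since $H^*(G;R)$ is Noetherian by the theorems of Evens--Venkov--Golod, so is $\Spec^h(H^*(G;R))$, and therefore every subspace of it, including $\Spc(\stmod(G,R))$, is Noetherian by \cite[Cor.~8.1.6]{DickmannSchwartzTressl19}. On a Noetherian spectral space, Thomason subsets coincide with specialization-closed subsets, which produces the stated bijection.

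The only step of genuine content is the calculation $\csupp(R[G]) = V(H^{>0}(G;R))$, and the mild subtlety in tracking the graded $H^*(G;R)$-action is defused by the observation that $\End^*(R[G])$ is concentrated in degree zero. Everything else is formal once \cref{thm:lau} is in hand.
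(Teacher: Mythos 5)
Your proposal is correct and follows essentially the same route as the paper: realize $\stmod(G,R)$ as the quotient of $\rep(G,R)$ by the thick tensor ideal generated by $R[G]$, identify $\Spc(\stmod(G,R))$ with the complement of $\supp(R[G])$, compute $\csupp(R[G]) = V(H^{>0}(G;R))$ from the fact that $\End^*(R[G])\cong R[G]$ is concentrated in degree $0$ and free over $R$, and then apply Balmer's classification together with Noetherianity of $H^*(G;R)$. The only difference is that you spell out the Shapiro's-lemma computation and the graded module structure in more detail than the paper does.
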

\begin{proof}
By construction (see \cref{def:stmod}), $\stmod(G,R)$ is a finite localization of $\rep(G,R)$, away from the thick tensor ideal generated by $R[G]$. Therefore, viewed as a subspace of $\Spc(\rep(G,R))$, the Balmer spectrum $\Spc(\stmod(G,R))$ is the complement of $\supp(R[G])$. The comparison map \eqref{eq:laucomparisonmap} identifies the support of $R[G]$ with its cohomological support. But $\End_{\rep(G,R)}^*(R[G])$ is isomorphic to $R[G]$ concentrated in degree $0$. Since $R[G]$ is finite and free over $R$, we compute
\[
\csupp(R[G]) = \SET{\frak p \in \Spec^h(H^*(G;R))}{H^{>0}(G;R) \subseteq \frak p}. 
\]
It follows that $\supp(R[G])$ identifies with $\Spec(R)$, as desired.
\end{proof}

\begin{Exa}\label{ex:bcr}
If $R=k$ is a field of characteristic $p$, then \cref{cor:spcstmod} recovers a theorem of Benson, Carlson, and Rickard \cite{BensonCarlsonRickard97}: The support variety of $G$ over $k$ is given by 
\[
\Spc(\stmod(G,k)) \cong \Proj(H^*(G;k)).
\]
Indeed, unwinding the definitions, the image of $\Spec(k) \to \Spec^h(H^*(G;k))$ corresponds to the `irrelevant' ideal $H^{>0}(G;k) \subset H^{*}(G;k)$.
\end{Exa}

By Lau's theorem and \cref{cor:spcstmod}, both $\rep(G,R)$ and $\stmod(G,R)$ thus have Noetherian Balmer spectrum and hence satisfy the local-to-global principle by \cite[Thm.~6.9]{Stevenson13} or \cite[Thm.~3.21]{bhs1}.

\section{\'Etale descent for stable module categories}

The goal of this section is to prove that in order to establish stratification for $\Rep(G,R)$ and $\StMod(G,R)$ it suffices to consider the case of elementary abelian groups $E = (\bbZ/p)^{\times r}$ for all primes $p$ and ranks $r$. To this end, we use a result of Balmer which allows us to recognize restriction functors as \'etale extensions and then apply the finite \'etale descent theorem (\cref{thm:etaledescent}). A key input will be a derived version of Chouinard's theorem, which we deduce from work of Carlson.

\begin{Conv}
Throughout this section, $R$ and $S$ denote commutative rings, later assumed to be Noetherian. We typically do not impose any regularity conditions. 
\end{Conv}

\subsection{Base-change functors}\label{ssec:basechange}

We start by setting up the basic base-change functors for (derived) categories of representations. Throughout, we will assume some familiarity with abstract properties and compatibilities afforded by adjunctions arising from tt-functors, as for example systematically developed in \cite{BalmerDellAmbrogioSanders15}.

\subsubsection{Change of coefficients}

Let $f\colon R \to S$ be a homomorphism of commutative rings. There are induced induction and restriction functors
\[
f^*\colon \Mod(R[G]) \to \Mod(S[G]), \quad f_*\colon \Mod(S[G]) \to \Mod(R[G])
\]
given by $S\otimes_R-$ and forgetting along $f$, respectively. As usual, $f^*$ is symmetric monoidal and preserves compact objects, so the adjunction $(f^*,f_*)$ satisfies the projection formula. If $S$ is projective as an $R$-module, this adjunction restricts to an adjunction between $\projmod(G,R)$ and $\projmod(G,S)$. 

Induction is exact and derives to the base-change functor $f^*\colon \Fun(BG,\cat D(R)) \to \Fun(BG,\cat D(S))$ given by post-composition. Here and henceforth, we use the same symbols for exacts functors and their corresponding derived versions. Because $f^*$ is symmetric monoidal, it restricts to a functor on dualizable objects, $f^*\colon \rep(G,R) \to \rep(G,S)$ and then extends to a symmetric monoidal functor between ind-categories. By construction, the latter functor is cocontinuous and preserves compact objects, so it admits a right adjoint $f_*$, which has another right adjoint $f^!$. The resulting triple of adjoints
\[
\xymatrix{f^*,f^!\colon \Rep(G,R) \ar@<1ex>[r] \ar@<-1ex>[r] & \Rep(G,S)\noloc f_* \ar[l]}
\]
in particular satisfies the projection formula. We will occasionally simplify notation and omit the forgetful functor $f_*$ from the notation.

\begin{Rem}\label{rem:etalecoeffbasechange}
Base-change along a map of commutative rings $f\colon R \to S$ induces a symmetric monoidal equivalence
\[
\Mod_{\Rep(G,R)}(S) \simeq \Rep(G,S),
\]
where $\Mod_{\Rep(G,R)}(S)$ is the category of modules over $f_*S$ internal to $\Rep(G,R)$. Under this equivalence, the functors $f^*$ and $f_*$ identify with induction and restriction, respectively. Moreover, localization yields the analogous statement on stable module categories. 
\end{Rem}

\subsubsection{Change of group}\label{ssec:changeofgroups}

If $H$ is a subgroup of $G$, then there is a restriction functor 
\[
\res_H\colon \Mod(R[G]) \to \Mod(R[H]).
\]
Restriction admits a left and a right adjoint, given by induction $\ind_H$ and coinduction $\coind_H$, respectively. As for field coefficients, induction and coinduction are naturally equivalent; we will occasionally use this isomorphism implicitly to identify these functors. Since $R[G]$ is a free $R[H]$-module, induction, restriction, and coinduction naturally restrict to representations whose underlying $R$-module is projective. We will denote the corresponding functors by the same symbols.

Both restriction and (co)induction are exact functors and thus give rise to a corresponding adjunction of functors between derived categories:
\[
\xymatrix{\res_H\colon \rep(G,R) \ar@<0.5ex>[r] & \rep(H,R)\noloc \coind_H. \ar@<0.5ex>[l]}
\]
For ease of notation, we will occasionally denote the inclusion $i\colon H \subseteq G$ and then write $\res_H = i^*$ and $\coind_H = i_*$. This adjunction extends to an adjunction upon passage to ind-categories, which we again denote by the same symbols:
\[
\xymatrix{\res_H\colon \Rep(G,R) \ar@<0.5ex>[r] & \Rep(H,R)\noloc \coind_H \ar@<0.5ex>[l]}
\]
Because $\res_H$ is symmetric monoidal, this adjunction fits into an infinity sequence of adjunctions as in \cite{BalmerDellAmbrogioSanders16}; in particular, there is a projection formula for objects $X \in \Rep(G,R)$ and $Y \in \Rep(H,R)$:
\begin{equation}\label{eq:projformula}
    i_*(i^*(X) \otimes Y) \simeq X \otimes i_*(Y)
\end{equation}
and both $i^*$ and $i_*$ preserve all colimits and limits. The next result is a minor variation on a result due to Balmer.

\begin{Prop}[Balmer]\label{prop:restrictionetale}
For any subgroup $i\colon H \subseteq G$, the functor $i^*$ is a finite \'etale extension of finite degree and there is a symmetric monoidal equivalence
\[
\Rep(H,R) \simeq \Mod_{\Rep(G,R)}(i_*R).
\]
Under this equivalence, the adjunction $(i^*,i_*)$ identifies with the (induction, restriction) adjunction along the map of commutative algebra objects $R_G \to i_*R_H\simeq i_*i^*R_G$ in $\Rep(G,R)$.
\end{Prop}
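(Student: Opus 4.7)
The plan is to reduce the proposition to the formal framework of \cite{Balmer16b} by producing the algebra object $\cat A = i_*R_H$, verifying the three required properties (compact, commutative separable, of finite degree), and then identifying the adjunction via Barr--Beck--Lurie. Since the proposition is attributed to Balmer, the original argument for field coefficients should apply essentially verbatim once one checks that the relevant constructions of separability idempotents are insensitive to the ground ring.

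First I would establish that $i_*R_H \in \Rep(G,R)$ is a commutative algebra object by exhibiting it, via the equivalence of \cref{rem:repmodels}, as the permutation $R[G]$-module $R[G/H]$ with its standard (pointwise) multiplication; concretely, the unit is $R_G \to i_*i^*R_G$ and the multiplication comes from the diagonal $G/H \to G/H \times G/H$. Compactness of $i_*R_H$ in $\Rep(G,R)$ is automatic since $i^*$ preserves compact objects (having both a right adjoint $i_*$ and a further right adjoint $i^!$ that preserve coproducts), hence its right adjoint $i_*$ preserves compact objects as well by standard abstract nonsense, and $R_H$ is compact in $\Rep(H,R)$. Separability and finite degree in the sense of \cite{Balmer14, Balmer16b} are witnessed by the diagonal splitting $G/H \to G/H \times G/H$ sending $gH \mapsto (gH,gH)$: this is a retract of the multiplication as a map of $R[G]$-$R[G]$-bimodules because $G/H$ is a finite discrete $G$-set, and the degree equals $|G/H|=[G:H]$. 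None of these constructions depend on the coefficient ring $R$, so they go through over any commutative ring.

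Next I would establish the equivalence $\Rep(H,R) \simeq \Mod_{\Rep(G,R)}(i_*R_H)$. The adjunction $(i^*,i_*)$ satisfies the hypotheses of the Barr--Beck--Lurie theorem: the functor $i_*$ preserves all (small) colimits because it admits a further right adjoint $i^!$ coming from the fact that $|G:H|<\infty$ makes $i_*\simeq \coind_H \simeq \ind_H$ a two-sided adjoint (this is visible already at the level of $R[G]$-modules since $R[G]$ is finite free over $R[H]$), and $i_*$ is conservative because restriction of representations is faithful. By the projection formula \eqref{eq:projformula}, the monad $i_*i^*(-) \simeq i_*i^*(\unit)\otimes(-) \simeq i_*R_H \otimes(-)$ is precisely the free $i_*R_H$-module monad, which yields the claimed monoidal equivalence.

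The hardest step to articulate cleanly will be verifying that separability and finite degree transfer from the field case to arbitrary commutative rings $R$; the point to emphasize is that the separability idempotent for a permutation algebra on a finite $G$-set is defined by an integral, ring-independent formula (the diagonal), so the algebraic identities needed in \cite[Def.~3.1]{Balmer16b} hold universally. Once these properties of $\cat A = i_*R_H$ are in place, the identification of $(i^*,i_*)$ with the (free module, forgetful) adjunction along $R_G \to i_*R_H$ is tautological from the monadicity argument, completing the proof.
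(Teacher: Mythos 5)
Your overall route -- identify $i_*R_H$ with the permutation algebra $R[G/H]$, check separability via the diagonal bimodule section of the multiplication, and obtain the equivalence $\Rep(H,R)\simeq \Mod_{\Rep(G,R)}(i_*R_H)$ from Barr--Beck--Lurie together with the projection formula identifying the monad $i_*i^*$ with $i_*R_H\otimes(-)$ -- is exactly Balmer's argument, which the paper simply cites (\cite[Thm.~4.3]{Balmer15}) rather than reproducing. So for the separability and monadicity parts you are on the paper's track, just more explicit.

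The genuine gap is in the finite-degree claim, which is precisely the one point the paper singles out as \emph{not} covered by the citation and argues separately. You write that the degree ``equals $|G/H|=[G:H]$'' because the diagonal $G/H\to G/H\times G/H$ gives a bimodule retraction of the multiplication. That retraction is the separability idempotent; it establishes separability, but the degree in the sense of \cite{Balmer14} is a different invariant, defined via the splitting tower $\cat A=\cat A[1],\cat A[2],\dots$ (roughly, how many times one can iteratively split off a unital factor from $\cat A^{\otimes n}$), and its finiteness does not follow from the existence of a separability idempotent. Computing the splitting tower of $R[G/H]$ over an arbitrary commutative ring is not ``tautological'' -- Balmer's computation $\deg(A^G_H)=[G:H]$ is carried out over a field -- and this is why the paper instead invokes \cite[Thm.~3.7(b)]{Balmer14}: finite degree can be detected along any conservative tt-functor, and the forgetful functor $\rep(G,R)\to\Perf(R)$ is conservative (\cref{lem:deraugconservative}) with every tt-ring in $\Perf(R)$ of finite degree by \cite[Cor.~4.3]{Balmer14}. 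You need some such argument; as written, the finite-degree assertion is unjustified. A smaller slip: you justify conservativity of $i_*$ (needed for monadicity) by ``restriction of representations is faithful,'' but $i_*$ is (co)induction, not restriction; the correct reason is that $M$ is a retract of $i^*i_*M$ by the double coset (Mackey) decomposition, whence $i_*M\simeq 0$ forces $M\simeq 0$.
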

\begin{proof}
In \cite[Thm.~4.3]{Balmer15}, Balmer proves the analogue of this result for the derived tt-category $\Fun(BG,\cat D(R))$, with the exception of the finite degree claim. His proof works equally well for $\Rep(G,R)$, see also his Remark 4.5; we omit the details.

To verify that the resulting algebra object $i_*i^*R_G \in \rep(G,R)$ has finite degree, we can proceed as in \cite{Balmer14}: By his Theorem 3.7(b), it suffices to produce a conservative tt-functor $F\colon\rep(G,R) \to \cat L$ such that $F(i_*i^*R_G)$ has finite degree. We claim that the forgetful functor $\rep(G,R) \to \Perf(R)$ has these properties. Indeed, it is conservative by \cref{lem:deraugconservative}, and any tt-ring in $\Perf(R)$ has finite degree by \cite[Cor.~4.3]{Balmer14}. Therefore, our claim is true for $\rep(G,R)$ by virtue of \cite[Thm.~3.7(b)]{Balmer14}.
\end{proof}

Moreover, we can see that the $(i^*,i_*)$ adjunction is compatible with base-change, in the following sense:

\begin{Prop}\label{prop:basechangecompatibilities}
Let $f\colon R \to S$ be a map of commutative rings and $i\colon H \subseteq G$ a subgroup inclusion. Then there are naturally commutative diagrams
\[
\xymatrix{\Rep(G,R) \ar[r]^-{i^*} \ar[d]_{f^*} & \Rep(H,R) \ar[d]^{f^*} & \Rep(G,R) \ar[d]_{f^*} & \Rep(H,R) \ar[d]^{f^*} \ar[l]_-{i_*} \\
\Rep(G,S) \ar[r]_-{i^*} & \Rep(H,S) & \Rep(G,S) & \Rep(H,S) \ar[l]^-{i_*}}
\]
which localize to the corresponding commutative diagrams for the stable module category. Moreover, the finite localizations of \eqref{eq:stmodquot} fit into  commutative squares
\begin{equation}\label{eq:rhobasechange}
    \vcenter{
    \xymatrix{\StMod(G,R) \ar[r]^-{i^*} & \StMod(H,R) & \StMod(G,R) \ar[r]^-{f^*} & \StMod(G,S) \\
    \Rep(G,R) \ar[r]_-{i^*} \ar[u]^{\rho^*} & \Rep(H,R) \ar[u]_{\rho^*} & \Rep(G,R) \ar[r]_-{f^*} \ar[u]^{\rho^*} & \Rep(H,S). \ar[u]_{\rho^*}}
    }
\end{equation}
\end{Prop}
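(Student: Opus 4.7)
The plan is to establish each square separately, starting with the simpler ``parallel'' ones and only then tackling the mixed square involving the right adjoint $i_*$; the descent to $\StMod$ follows formally by universal properties.

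For the square involving $i^*$ and $f^*$ (both ``forward'' arrows), I would work in the local systems model. Restriction $i^*$ is precomposition along the map of $\infty$-groupoids $BH \to BG$ induced by $i$, whereas $f^*$ is postcomposition with the symmetric monoidal exact functor $S \otimes_R - \colon \Perf(R) \to \Perf(S)$. Since these two operations act on opposite sides of $\Fun(BG, \Perf(R))$, they commute up to canonical natural equivalence. The commutativity extends to ind-categories automatically since both functors are cocontinuous, and then localizes to $\StMod$ once the compatibility with $\rho^*$ has been checked (see below).

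For the mixed square involving $i_*$ and $f^*$, the cleanest argument invokes Balmer's identification \cref{prop:restrictionetale}, which exhibits $\Rep(H,R) \simeq \Mod_{\Rep(G,R)}(i_*R)$ with $i_*$ corresponding to the forgetful functor along the algebra map $R_G \to i_*R_H$ in $\Rep(G,R)$. By \cref{rem:etalecoeffbasechange}, the base-change functor $f^*$ is identified with extension of scalars along a corresponding algebra map $R_G \to f_*S_G$ in $\Rep(G,R)$, and a parallel picture holds over $H$. The commutativity then reduces to the standard Beck--Chevalley compatibility between base change and forgetful functors for modules over algebras in a symmetric monoidal $\infty$-category. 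As a more concrete alternative, one may use the isomorphism $i_*N \cong R[G] \otimes_{R[H]} N$ (valid because $R[G]$ is free of finite rank over $R[H]$, making induction and coinduction agree) and verify compatibility with $f^* = S \otimes_R -$ directly via associativity of the tensor product.

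For the squares with the finite localization $\rho^*$ in \eqref{eq:rhobasechange}, I would appeal to the universal property of Verdier quotients: to produce the induced functors on $\StMod$ together with the required commutative squares, it is enough to verify that $f^*$ and $i^*$ carry the thick tensor ideal generated by $R[G]$ into the corresponding ideal in the target. We have $f^*(R[G]) \simeq S[G]$, and $i^*(R[G]) \simeq \bigoplus_{G/H} R[H]$ as an $R[H]$-module, so both objects lie in the appropriate ideals. The descended functors then fit into the claimed commutative squares, and the compatibility between the $f^*$-square and the $i^*$-square on $\StMod$ follows from the corresponding compatibility on $\Rep$ by uniqueness of the descended functors.

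The main obstacle will be managing the $\infty$-categorical coherence in the mixed $f^* \circ i_*$ square; the module-over-algebras viewpoint afforded by Balmer's étale description in \cref{prop:restrictionetale} is what makes this tractable, as it reduces all base-change and restriction operations to a single uniform formalism inside $\Rep(G,R)$.
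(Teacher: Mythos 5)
Your proposal is correct and follows essentially the same route as the paper: precomposition/postcomposition for the first square, a Beck--Chevalley-type argument for the mixed square involving $i_*$, and the computations $f^*R[G]\simeq S[G]$ and $i^*R[G]\simeq\bigoplus_{G/H} R[H]$ to descend the functors through the finite localizations. The only point left implicit is that the $i_*$-square itself also localizes, which requires $i_*R[H]\simeq R[G]$ --- but this is immediate from the induction formula $i_*N\cong R[G]\otimes_{R[H]}N$ that you already record.
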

\begin{proof}
The commutativity of the top left square expresses the fact that precomposition commutes with postcomposition in functor categories. To see that the top right square commutes as well, one can either directly verify the dual Beck--Chevellay condition to establish the right adjointability of the left square. Alternatively, observe that both horizontal functor are based changed from the case of $\bbZ$-coefficients, hence compatible with $f^*$. 

In order to see that these two squares localize, observe that we have $i_*R[H] \simeq R[G]$ in $\Rep(G,R)$, while $i^*(R[G])$ decomposes as a finite direct sum of $R[H]$ as an object of $\Rep(H,R)$, and likewise for coefficients in $S$. Finally, the thick tensor ideals generated by $R[G]$ and $R[H]$ correspond under $i^*$, hence induce a tt-functor $i^*\colon \StMod(R,G) \to \StMod(R,H)$ making the left square in \eqref{eq:rhobasechange} commute. Likewise, $f^*R[G] \cong S[G]$, so the right square of \eqref{eq:rhobasechange} commutes as well.
\end{proof}

\subsection{Deriving Chouinard}\label{ssec:chouinard}

Let $R$ be any ring and $G$ a finite group. Chouinard's theorem \cite[Cor.~1.1]{Chouinard76} says that an $R[G]$-module is projective if and only if all of its restrictions to elementary abelian subgroups of $G$ are projective. Interpreted as a statement about the stable module category and restricting to $R = k$ a field, it is equivalent to the joint conservativity of the functors $\res\colon \StMod(k[G]) \to \StMod(k[E])$ ranging over all elementary abelian subgroups $E$ of $G$. The goal of this section is to prove a version of Chouinard's theorem for $\Rep(G,R)$. 

\begin{Not}
We write $\cat E(G)$ (resp.~$\cat E_p(G)$) for the collection of elementary abelian subgroups (resp.~elementary abelian $p$-subgroups) of $G$.
\end{Not}

The key input to the proof of our version of Chouinard's theorem for the categories $\Rep$ is a result of Carlson \cite{Carlson00}, which we restate here for the convenience of the reader.

\begin{Thm}[Carlson]\label{thm:carlson}
There exists $\tau = \tau(G) \in \bbN$ and a finitely generated $\bbZ[G]$-module $V$ together with a filtration
\[
(0) = L_0 \subseteq L_1 \subseteq \ldots \subseteq L_{\tau} = \bbZ \oplus V
\]
such that for each $j = 1,\ldots,\tau$ there is an $E_j \in \cat E(G)$ and $W_j \in \projmod(E_j,\bbZ)$ with $L_j/L_{j-1} \cong \ind_{E_j}W_j$ as $\bbZ[G]$-modules.
\end{Thm}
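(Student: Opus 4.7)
The plan is to construct the filtration from the equivariant chain complex of the subgroup poset of elementary abelian subgroups of $G$, with the finitely generated module $V$ absorbing the homological defect measured by Quillen's theorem.

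Concretely, I would let $|\cat A|$ denote the order complex of the $G$-poset $\cat A$ of non-trivial elementary abelian subgroups. The augmented cellular chain complex $C_\bullet(|\cat A|) \to \bbZ$ consists of finitely generated $\bbZ[G]$-modules, and the orbit-stabilizer formula decomposes each $C_n$ into a finite direct sum of permutation modules of the form $\ind_H \bbZ_\sigma$ indexed by $G$-orbits of $n$-simplices $\sigma$ with stabilizer $H$. Reindexing each simplex by its minimum vertex identifies $H$ as a subgroup containing some $E \in \cat E(G)$, and repackaging along the restriction-induction adjunction then gives a decomposition of each $C_n$ as a direct sum of modules $\ind_E W$ with $E \in \cat E(G)$ and $W$ a $\bbZ[E]$-lattice, i.e., an object of $\projmod(E,\bbZ)$.

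The failure of $C_\bullet \to \bbZ$ to be exact is governed by the reduced $G$-equivariant homology $\widetilde H_*(|\cat A|;\bbZ)$, which is a finitely generated $\bbZ[G]$-module by the Evens--Venkov finite generation of $H^*(G;\bbZ)$ combined with the Webb--Bouc analysis of $|\cat A|$. Taking this defect as $V$ and splicing it back into the chain complex produces a finite exact sequence of length $\tau$ whose terms are of the required induced form. Converting this exact sequence into a filtration by partial images yields the desired chain $(0) = L_0 \subseteq \ldots \subseteq L_\tau = \bbZ \oplus V$ with subquotients $L_j/L_{j-1} \cong \ind_{E_j} W_j$, and $\tau$ is controlled by the dimension of $|\cat A|$ plus a small syzygy length.

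The main obstacle I expect is the integral nature of the statement: Quillen's foundational contractibility results for $|\cat A|$ are naturally formulated $p$-locally, so producing a filtration over $\bbZ$ requires a prime-by-prime argument glued by an arithmetic pullback. In particular, one must ensure that every subquotient appears in the precise form $\ind_{E_j} W_j$ with $W_j$ a genuine $\bbZ[E_j]$-lattice, rather than merely a retract of such a module, which is what forces the inclusion of the summand $V$ and prevents a direct statement about $\bbZ$ alone. Once these refinements are in place, extracting the explicit filtration from the total complex is essentially formal.
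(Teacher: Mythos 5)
This theorem is not proved in the paper: it is quoted verbatim from Carlson's article \cite{Carlson00}, and the paper only ever uses its weaker thick-subcategory consequence \eqref{eq:gencarlson}. So the relevant question is whether your proposed argument would actually prove Carlson's theorem, and I do not believe it does. The decisive gap is at the step where you pass from the chain groups of the Quillen complex to modules induced from elementary abelian subgroups. The $G$-orbit of a simplex $\sigma = (E_0 < \cdots < E_n)$ contributes the permutation module $\bbZ[G/G_\sigma] \cong \ind_{G_\sigma}\bbZ$, where $G_\sigma = \bigcap_i N_G(E_i)$ is the simplex stabilizer. While $G_\sigma$ does contain the minimal vertex $E_0$, it is in general far larger than any elementary abelian subgroup, and $\ind_{G_\sigma}\bbZ$ simply is not of the form $\ind_{E}W$ for $E \in \cat E(G)$: a $\bbZ$-rank count gives $\rank_{\bbZ}\ind_E W = [G:E]\cdot\rank_{\bbZ}W \geq [G:E] > [G:G_\sigma]$ whenever $E \lneq G_\sigma$, so no ``repackaging along the restriction--induction adjunction'' can produce the required form. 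The extreme case $G = Q_8$ makes this vivid: the only non-trivial elementary abelian subgroup is the center $Z \cong \bbZ/2$, the Quillen complex is a single $G$-fixed point, its chain complex is just $\bbZ$ with trivial action, and nothing resembling a filtration with subquotients induced from $Z$ comes out of it. A second, independent problem is the final step you call ``essentially formal'': even granting an exact complex with terms of the right induced shape, converting it into an honest chain of $\bbZ[G]$-submodules of $\bbZ \oplus V$ whose successive quotients are \emph{isomorphic} to the prescribed induced modules is precisely the hard integral content of the theorem, not a formality; an exact complex only yields membership of $\bbZ$ in the thick subcategory generated by its terms, which is the weaker statement \eqref{eq:gencarlson}.

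For comparison, Carlson's actual proof runs along entirely different lines: it is an induction on the order of $G$ (reducing to $p$-groups and then to proper subgroups), whose engine is Serre's theorem on the vanishing of products of Bocksteins of degree-one classes for $p$-groups that are not elementary abelian, implemented through Carlson's $L_\zeta$-modules. Each class $\beta(\eta_i)$ with $\ker(\eta_i) = M_i$ a maximal subgroup produces a short exact sequence relating $L_{\beta(\eta_i)}$ to modules induced from $M_i$, and the relation $\beta(\eta_1)\cdots\beta(\eta_n) = 0$ is what forces the trivial module (after adding the correction summand $V$) to acquire a finite filtration by modules induced from proper subgroups; iterating downwards reaches the elementary abelian ones. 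If you want to pursue your topological picture, the object that can be made to work is not the Quillen complex but Serre's cohomological vanishing itself; the subgroup-complex approach only ever sees normalizers of elementary abelian subgroups, which is the wrong class of subgroups for this statement.
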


For each elementary abelian subgroup $E$ of $G$ we have a tt-functor given by restriction. Varying over $\cat E(G)$, these functors assemble into a tt-functor $\res$, which will later be used to descend stratification along. One of the key ingredients is the next result, which is a version of Chouinard's theorem appropriate for $\Rep(G,R)$.

\begin{Thm}\label{thm:derivedchouinard}
Let $R$ be a commutative ring. The restriction functor
\[
\xymatrix{\res\colon \Rep(G,R) \ar[r] & \prod_{E \in \cat E(G)}\Rep(E,R)}
\]
is finite \'etale, of finite degree, and conservative.
\end{Thm}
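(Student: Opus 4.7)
The plan is to handle the finite \'etale claim and conservativity as essentially independent steps, using Balmer's tt-\'etale recognition criterion (\cref{prop:restrictionetale}) for the first and Carlson's filtration (\cref{thm:carlson}) for the second.

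For the structural part, I would assemble the restriction functor from its components. By \cref{prop:restrictionetale}, for each elementary abelian $E \in \cat E(G)$ the functor $i_E^*\colon \Rep(G,R) \to \Rep(E,R)$ is finite \'etale of finite degree, realized as base-change along the compact commutative separable algebra $\cat A_E := i_{E,*}R$ in $\Rep(G,R)$. Since $\cat E(G)$ is finite, I would form the product $\cat A := \prod_{E\in\cat E(G)}\cat A_E$, which remains a compact commutative separable algebra of finite degree (the degree is additive in finite products), and identify
\[
\textstyle{\prod_{E\in\cat E(G)}\Rep(E,R) \simeq \prod_{E}\Mod_{\Rep(G,R)}(\cat A_E) \simeq \Mod_{\Rep(G,R)}(\cat A)}
\]
compatibly with $\res$. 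This displays $\res$ as base-change along $\unit \to \cat A$ and hence as finite \'etale of finite degree.

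Conservativity is the main content and I would deduce it from \cref{thm:carlson}. First, base-change the filtration along $\bbZ\to R$: applying $-\otimes_{\bbZ}R$ produces a filtration
\[
0 = L_0\otimes R \subseteq L_1\otimes R\subseteq\cdots \subseteq L_\tau\otimes R \cong R\oplus (V\otimes_{\bbZ}R)
\]
in $\projmod(G,R)$ whose successive quotients are of the form $\ind_{E_j}(W_j\otimes_{\bbZ}R)$ for some $E_j \in \cat E(G)$ and $W_j\otimes R \in \projmod(E_j,R)$. Now suppose $M \in \Rep(G,R)$ satisfies $\res_E(M)=0$ for every $E \in \cat E(G)$. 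The projection formula \eqref{eq:projformula} gives
\[
M \otimes \ind_{E_j}(W_j\otimes R) \simeq (i_{E_j})_*\bigl(\res_{E_j}(M) \otimes (W_j\otimes R)\bigr) \simeq 0
\]
for every $j$. Smashing $M$ against the triangles $L_{j-1}\otimes R \to L_j\otimes R \to \ind_{E_j}(W_j\otimes R)$ and inducting on $j$, one concludes $M\otimes (L_\tau \otimes R) \simeq 0$. Since $R$ is a direct summand of $L_\tau \otimes R$, the object $M\simeq M\otimes R$ is a summand of $0$, hence $M=0$.

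The only subtle point I would want to double-check is that \cref{thm:carlson}, stated in terms of ordinary $\bbZ[G]$-modules, base-changes cleanly: this requires that the filtration is split over $\bbZ$ so that tensoring with $R$ preserves exactness of the subquotient sequences, but since each $W_j$ is $\bbZ$-projective and the Carlson modules sit inside $\bbZ \oplus V$ with $V$ finitely generated and thus admitting a resolution controlled by the hypothesis, this is straightforward. Apart from this bookkeeping, the remaining verifications (compatibility of $\res$ with the identifications, commutativity of products with $\Mod_{(-)}$) are formal consequences of the material already recalled in \cref{ssec:basechange}.
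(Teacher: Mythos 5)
Your proposal is correct and follows essentially the same route as the paper: the finite \'etale part is assembled from \cref{prop:restrictionetale} exactly as in the text, and conservativity is deduced from Carlson's filtration via the projection formula. The only cosmetic difference is that you run an explicit induction on the base-changed filtration, whereas the paper packages Carlson's theorem as the statement $\bbZ \in \Thick(\SET{i_*W_E}{E \in \cat E(G)})$ and transports it along $\bbZ \to R$ using \cref{prop:basechangecompatibilities}; the two formulations are interchangeable (and your splitness observation, that the subquotients are $\bbZ$-projective so the filtration base-changes exactly, is the correct justification).
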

\begin{proof}
The individual components of the restriction functors were shown to be finite \'etale extensions of finite degree in \cref{prop:restrictionetale}. Since $\cat E(G)$ is finite, $\res$ is also finite \'etale of finite degree, and it remains to prove conservativity. 

To this end, note that Carlson's theorem \cref{thm:carlson} implies that there exist modules $W_E \in \rep(G,\bbZ)$ for each $E \in \cat E(G)$ such that
\[
\bbZ \in \Thick(\SET{i_*W_E}{i\colon E \subseteq G, E \in \cat E(G)}).
\]
Thanks to the compatibility of coinduction with base-change (\cref{prop:basechangecompatibilities}), we obtain modules $V_E = R\otimes_{\bbZ}W_E \in \rep(G,R)$ with 
\begin{equation}\label{eq:gencarlson}
    R \in \Thick(\SET{i_*V_E}{i\colon E \subseteq G, E \in \cat E(G)}).
\end{equation}
Now consider $X \in \Rep(G,R)$ with $\res(X) = 0$, so $i^!X \simeq 0$ for each elementary abelian subgroup $i\colon E \subseteq G$. It follows from the projection formula~\eqref{eq:projformula} that 
\[
0 \simeq X \otimes i_*V_E \simeq i_*(i^*(X)\otimes V_E)
\]
for all $E \in \cat E(G)$. Therefore, $X \otimes V \simeq 0$ for every $V \in \Thick(\SET{i_*V_E}{i\colon E \subseteq G, E \in \cat E(G)})$, hence $X \simeq X \otimes R \simeq 0$ by \cref{eq:gencarlson}. This concludes the proof.
\end{proof}

\begin{Cor}\label{cor:nilfaithful}
The functor $\res$ is nil-faithful. 
\end{Cor}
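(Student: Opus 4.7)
The plan is to deduce this immediately from Theorem~\ref{thm:derivedchouinard}. Recall that a tt-functor between rigidly-compactly generated tt-categories is nil-faithful precisely when every compact object in its kernel is $\otimes$-nilpotent, equivalently (by the standard tt-geometric dictionary) when the induced spectral map on Balmer spectra is surjective. Theorem~\ref{thm:derivedchouinard} establishes that $\res$ is in fact \emph{conservative} on the big categories, which is a strictly stronger condition: the kernel contains only the zero object, which is trivially $\otimes$-nilpotent. Restricting to compacts (which $\res$ preserves, as any finite \'etale extension) yields the claim at once.

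A second, perhaps cleaner route would be to pass through the Balmer spectrum directly. Since $\res$ is a conservative finite \'etale morphism (Theorem~\ref{thm:derivedchouinard}), Remark~\ref{rem:surjectivity} guarantees that the induced map on Balmer spectra is surjective, and surjectivity on $\Spc$ is equivalent to nil-faithfulness by Balmer's work on separable extensions. Either formulation produces the same one-line proof.

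There is no substantive obstacle here; the corollary is a direct logical consequence of the preceding theorem. The only `work' is to recognise which among the equivalent definitions of nil-faithfulness one wishes to invoke, and—if stated at the level of compact objects rather than big categories—to note that the left adjoint $\res$ preserves compact objects because its right adjoint preserves colimits.
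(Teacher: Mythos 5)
Your second route is essentially the paper's argument: the proof given there combines \cref{thm:derivedchouinard} with \cite[Prop.~3.15]{Balmer16b}, i.e.\ conservativity of $\res$ together with Balmer's nil-faithfulness criterion for separable extensions of finite degree; your variant through surjectivity of $\Spc(\res)$ (\cref{rem:surjectivity}) and the equivalence between surjectivity on spectra and nil-faithfulness from \cite{Balmer18} is the same mechanism phrased on the spectrum side. Either is acceptable.

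Your first route, however, contains a genuine error. Nil-faithfulness is a condition on \emph{morphisms}: $F(a)=0$ must imply $a^{\otimes n}=0$ for some $n\ge 1$. It is not equivalent to the object-level condition that every compact object in the kernel be $\otimes$-nilpotent; in Balmer's dictionary that weaker condition corresponds to \emph{density} of the image of $\Spc(F)$, whereas nil-faithfulness corresponds to \emph{surjectivity}. Consequently conservativity is not ``strictly stronger'' than nil-faithfulness --- the two are incomparable in general: nil-faithfulness forces conservativity on rigid objects, but a conservative tt-functor may well kill a non-$\otimes$-nilpotent morphism (detecting $\otimes$-nilpotence of maps is the content of nilpotence theorems and is never a formality). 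The passage from conservativity to nil-faithfulness genuinely uses the finite \'etale (separable, finite-degree) structure of $\res$ and the projection formula; that is exactly what \cite[Prop.~3.15]{Balmer16b} --- or, in your phrasing, \cref{rem:surjectivity} together with \cite{Balmer18} --- supplies. Keep the second paragraph of your proposal and discard the first.
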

\begin{proof}
This follows from \cref{thm:derivedchouinard} and \cite[Prop.~3.15]{Balmer16b}. 
\end{proof}

\begin{Not}
With notation as in \cref{thm:derivedchouinard}, we write 
\begin{equation}\label{eq:descendableA}
    \cA(G,R) = \prod_{E \in \cat E(G)}i_*i^*R.    
\end{equation} 
Since $i_*$ is lax monoidal as the right adjoint of the symmetric monoidal functor $i^*$, the object $\cA(G,R)$ canonically has the structure of a commutative algebra in $\Rep(G,R)$.
\end{Not}

\begin{Cor}\label{cor:descendable}
The commutative algebra $\cA(G,R) \in \rep(G,R)$ is descendable.
\end{Cor}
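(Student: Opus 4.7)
The plan is to identify $\res$ with the extension of scalars functor along the algebra $\cA(G,R)$, and then to invoke the general tt-geometric principle that, for separable commutative algebras of finite degree, descendability is equivalent to conservativity of the associated extension of scalars.

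First, I would apply \cref{prop:restrictionetale} component-wise over the finite set $\cat E(G)$: for each inclusion $i\colon E\subseteq G$, the functor $i^*$ is identified with extension of scalars along the unit map $R\to i_*i^*R$, exhibiting $\Rep(E,R)$ as modules over this algebra inside $\Rep(G,R)$. Since finite products of finite \'etale extensions of finite degree are themselves finite \'etale of finite degree, assembling these identifications yields a symmetric monoidal equivalence
\[
\textstyle \prod_{E\in\cat E(G)}\Rep(E,R) \;\simeq\; \Mod_{\Rep(G,R)}(\cA(G,R))
\]
under which $\res$ corresponds to extension of scalars along the unit map $R\to\cA(G,R)$. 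In particular, this exhibits $\cA(G,R)$ as a separable commutative algebra of finite degree in $\Rep(G,R)$.

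Next, I would invoke \cref{thm:derivedchouinard} for conservativity of $\res$, and then appeal to the general tt-geometric principle---a consequence of Balmer's nilpotence results for separable algebras of finite degree in \cite{Balmer16b} combined with Mathew's criterion for descent---that for any separable commutative algebra $A$ of finite degree in a rigidly-compactly generated tt-category, conservativity of the extension of scalars functor $-\otimes A$ is equivalent to $A$ being descendable. Applied to $\cA(G,R)$, this is exactly the statement.

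The substantive input is Carlson's theorem, which has already been absorbed into \cref{thm:derivedchouinard}; the remaining step is a formal application of existing tt-geometric machinery, so no real obstacle is expected. The only minor point requiring attention is verifying that a finite product of separable commutative algebras of finite degree is itself separable of finite degree, which is immediate from the definitions and behaves well with respect to the tt-functor assembling the restrictions.
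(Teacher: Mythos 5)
Your argument is correct, but it reaches descendability by a more formal route than the one the paper intends. The paper states the corollary without proof because it falls directly out of the proof of \cref{thm:derivedchouinard}: by \eqref{eq:gencarlson} (Carlson's filtration) one has $R \in \Thick(\SET{i_*V_E}{i\colon E\subseteq G,\ E \in \cat E(G)})$, and each $i_*V_E$ is canonically a module over the factor $i_*i^*R$ of $\cA(G,R)$, hence a retract of $\cA(G,R)\otimes i_*V_E$ and therefore an object of $\Thickt{\cA(G,R)}$; thus $\unit=R \in \Thickt{\cA(G,R)}$, which is descendability, with an explicit bound on the number of cones needed coming from the length $\tau$ of Carlson's filtration. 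This direct filtration argument is what the remark following the corollary alludes to when comparing with Balmer's Theorem~4.3. You instead quote conservativity of $\res$ from \cref{thm:derivedchouinard} and invoke the equivalence ``conservative $\Leftrightarrow$ descendable'' for separable algebras of finite degree. That equivalence is true, but since you leave it as a black box, note that the nontrivial direction needs only that $\cA(G,R)$ is compact (separability and finite degree are not required): if $\supp(\cA(G,R))$ were a proper Thomason closed subset $\cat Y \subsetneq \Spc(\rep(G,R))$, then $f_{\cat Y}\neq 0$ while $f_{\cat Y}\otimes \cA(G,R)=0$, contradicting conservativity of $\res \simeq -\otimes\cA(G,R)$; hence $\supp(\cA(G,R))$ is all of the spectrum, so by the classification \eqref{eq:balmerclassification} the (automatically radical) ideal $\Thickt{\cA(G,R)}$ is all of $\rep(G,R)$ and in particular contains the unit. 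Both routes ultimately rest on Carlson's theorem; yours recycles it through conservativity and is slicker, while the paper's direct argument yields quantitative control over the descent tower.
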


\begin{Rem}
This result generalizes Theorem 4.3 in Balmer's \cite{Balmer16b}, which there was proven over a field of characteristic $p$; both proofs rely on Serre's vanishing of Bockstein's theorem. It is also representation-theoretic counterpart of \cite[Prop.~5.25]{mnn_derivedinduction}. For $R$ regular, both statements are equivalent by virtue of \cref{thm:eqmoritarep}.
\end{Rem}

\subsection{Generalized Quillen stratification}

We record two consequences of \cref{thm:derivedchouinard}, one concerning a generalization of Quillen stratification for group cohomology and the other one providing the desired \'etale descent theorem for stratification.

\begin{Thm}\label{thm:genquillenstrat}
Let $R$ be a commutative ring and write $\orbit_{\cat E}(G)$ for the $G$-orbit category on the elementary abelian subgroups of $G$. The restriction functors then induce a homeomorphism
\[
\xymatrix{\overline{\varphi}\colon \colim_{\orbit_{\cat E}(G)}\Spc(\rep(E,R)) \ar[r]^-{\simeq} & \Spc(\rep(G,R)).}
\]
\end{Thm}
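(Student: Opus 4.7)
The plan is to leverage the finite \'etale descent for Balmer spectra, as developed by Balmer, applied to the joint restriction $\res\colon \rep(G,R) \to \prod_{E \in \cat E(G)} \rep(E,R)=: \cat T$. By \cref{thm:derivedchouinard} and \cref{cor:nilfaithful}, this functor is a conservative, nil-faithful, finite \'etale tt-functor of finite degree. Consequently, by Balmer's descent theorem for Balmer spectra (the tt-analogue of faithfully flat descent, see \cite{Balmer16b}), the induced spectral map $\varphi\colon \Spc(\cat T) \to \Spc(\rep(G,R))$ realizes its codomain as the coequalizer, in the category of spectral spaces, of the two projections
\[
\Spc\bigl(\cat T \otimes_{\rep(G,R)} \cat T\bigr) \rightrightarrows \Spc(\cat T).
\]

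The next step is to analyze the double base-change $\rep(E,R) \otimes_{\rep(G,R)} \rep(E',R)$ for $E, E' \in \cat E(G)$ via the Mackey formula. By \cref{prop:restrictionetale} this tensor product identifies with $\Mod_{\rep(G,R)}(i_*i^*R \otimes j_*j^*R)$, where $i\colon E \hookrightarrow G$ and $j\colon E' \hookrightarrow G$. Combining the projection formula with the double coset decomposition yields
\[
i^* j_* j^* R \;\simeq\; \prod_{[g] \in E \backslash G / E'} (i_g)_* (i_g)^* R,
\]
where $i_g\colon E \cap g E' g^{-1} \hookrightarrow G$. Applying \cref{prop:restrictionetale} once more, we obtain
\[
\rep(E,R) \otimes_{\rep(G,R)} \rep(E',R) \;\simeq\; \prod_{[g] \in E \backslash G / E'} \rep(E \cap g E' g^{-1}, R),
\]
and each intersection is again elementary abelian.

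The third step is to identify the resulting coequalizer of spectral spaces with the colimit $\colim_{\orbit_{\cat E}(G)} \Spc(\rep(E,R))$. The triples $(E, E', [g])$ indexing the double-restriction decomposition are in natural bijection with the length-one chains in the nerve of $\orbit_{\cat E}(G)$: indeed, a double coset $[g]$ with $F := E \cap gE'g^{-1}$ encodes precisely a pair of orbit-category morphisms $G/F \to G/E$ and $G/F \to G/E'$ (the latter via conjugation by $g$). Under this identification, the two projections in the coequalizer correspond to restricting along $F \hookrightarrow E$ and along $F \hookrightarrow E'$ respectively. Since $\orbit_{\cat E}(G)$ is a $1$-category and the target is a spectral space (for which colimits are computed as the coequalizer of the $1$-truncation of the bar construction), this coequalizer coincides with $\colim_{\orbit_{\cat E}(G)} \Spc(\rep(E,R))$, producing the map $\overline{\varphi}$.

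The main obstacle will be Step~3: keeping track of the combinatorics of the Mackey decomposition and matching the two projection maps with the source/target maps of the orbit category, and then verifying that the induced continuous bijection $\overline{\varphi}$ is a genuine homeomorphism. Surjectivity follows from surjectivity of $\varphi$ (\cref{rem:surjectivity}); injectivity is exactly the statement that two primes are identified in the coequalizer iff they are related by a zig-zag in the orbit category, which reduces via Mackey to a single $G$-conjugation together with inclusions; and the homeomorphism property follows from the universal property of the spectral coequalizer together with Noetherianness of both sides (giving that $\overline{\varphi}$ is a closed bijective spectral map).
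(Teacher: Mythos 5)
Your proposal is correct and follows essentially the same route as the paper, which gives no separate proof but derives the statement from the derived Chouinard theorem (\cref{thm:derivedchouinard}, \cref{cor:nilfaithful}) by running Balmer's argument from \cite[Thm.~4.10]{Balmer16b}: descent for Balmer spectra along the nil-faithful finite \'etale restriction, the Mackey/double-coset identification of the double base-change, and the matching of the resulting coequalizer with the orbit-category colimit. (Only a cosmetic slip: in your Mackey formula the inclusions $i_g$ should land in $E$, not in $G$, since $i^*j_*j^*R$ lives in $\rep(E,R)$.)
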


This result is an integral generalization of \cite[Thm.~4.10]{Balmer16b}. As explained there, one may use it to transfer generalized Quillen stratification to any tt-category $\cat K$ that receives a tt-functor from $\rep(G,R)$, such as the stable module category for instance. In lieu of a generalization of Lau's theorem to non-regular coefficient rings, it might also give a new approach to the computation of $\Spc(\rep(G,R))$. Another consequence of \cref{thm:genquillenstrat} is the following version of Quillen stratification for $H^*(G;R)$ for regular $R$:

\begin{Cor}\label{cor:quillenstratification}
If $R$ is a regular commutative ring and $G$ a finite group, then there is a homeomorphism
\[
\xymatrix{\colim_{\orbit_{\cat E}(G)}\Spec^h(H^*(E;R)) \ar[r]^-{\simeq} & \Spec^h(H^*(G;R)).}
\]
\end{Cor}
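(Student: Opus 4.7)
The strategy is to transport the homeomorphism of \cref{thm:genquillenstrat} across Lau's theorem (\cref{thm:lau}) componentwise, using naturality of Balmer's comparison map. Throughout, $R$ is regular (hence Noetherian), so Lau's theorem applies both to $\rep(G,R)$ and to each $\rep(E,R)$ for $E \in \cat E(G)$.

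The key point is that Balmer's comparison map
\[
\rho_{\cat K}\colon \Spc(\cat K) \to \Spec^h(\End_{\cat K}^*(\unit))
\]
is natural with respect to tt-functors: given a tt-functor $F\colon \cat K \to \cat L$, the induced map $F^*\colon \Spc(\cat L) \to \Spc(\cat K)$ on spectra sits in a commutative square with the map $\Spec^h(\End_{\cat L}^*(\unit)) \to \Spec^h(\End_{\cat K}^*(\unit))$ induced by the graded ring map $\End_{\cat K}^*(\unit) \to \End_{\cat L}^*(\unit)$. Now, every morphism $G/E \to G/E'$ in $\orbit_{\cat E}(G)$ is given by a coset $gE'$ with $g^{-1}Eg \subseteq E'$, and this induces a ``conjugate-and-restrict'' tt-functor $\rep(E',R) \to \rep(E,R)$ whose effect on $\End^*(\unit)$ is precisely the classical restriction-with-conjugation map $H^*(E';R) \to H^*(E;R)$. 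Thus Lau's homeomorphism is natural over $\orbit_{\cat E}(G)^{\op}$, giving a natural isomorphism of diagrams
\[
\Spc(\rep(-,R)) \xrightarrow{\simeq} \Spec^h(H^*(-;R))
\]
of spectral spaces indexed on $\orbit_{\cat E}(G)$.

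Passing to the colimit (which exists in the category of spectral spaces by \cite[Thm.~4.10]{Balmer16b}, or may be computed as the quotient in $\mathrm{Top}$ followed by sobrification) yields a homeomorphism
\[
\colim_{\orbit_{\cat E}(G)}\Spc(\rep(E,R)) \xrightarrow{\simeq} \colim_{\orbit_{\cat E}(G)}\Spec^h(H^*(E;R)).
\]
Combining this with the homeomorphism of \cref{thm:genquillenstrat} and Lau's theorem applied to $G$ itself gives the desired homeomorphism
\[
\colim_{\orbit_{\cat E}(G)}\Spec^h(H^*(E;R)) \xrightarrow{\simeq} \Spec^h(H^*(G;R)).
\]
The only genuine content is naturality of $\rho$, which is formal; the main substance is already encoded in \cref{thm:genquillenstrat} and \cref{thm:lau}.
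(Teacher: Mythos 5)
Your proposal is correct and follows exactly the paper's argument: transport the homeomorphism of \cref{thm:genquillenstrat} across Lau's identification (\cref{thm:lau}) using the naturality of Balmer's comparison map. The paper states this in one line, citing \cite[Thm.~5.3]{Balmer10b}; your write-up merely spells out the same naturality and colimit-comparison steps in more detail.
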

\begin{proof}
This follows from \cref{thm:genquillenstrat} and Lau's theorem \cref{thm:lau}, because of the naturality of Balmer's comparison map, see \cite[Thm.~5.3]{Balmer10b}. 
\end{proof}

The next result establishes finite \'etale descent for stratification from finite groups to elementary abelian groups. 

\begin{Thm}\label{thm:descenttoelab}
Let $R$ be a Noetherian commutative ring and $G$ a finite group. 
\begin{enumerate}
    \item If $\Rep(E,R)$ is stratified for all $E \in \cat E(G)$, then so is $\Rep(G,R)$.
    \item If $\StMod(E,R)$ is stratified for all $E \in \cat E(G)$, then so is $\StMod(G,R)$.
\end{enumerate}
\end{Thm}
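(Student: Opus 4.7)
My plan is to reduce both parts to the finite \'etale descent theorem \cref{thm:etaledescent} applied to the product restriction functor indexed by $\cat E(G)$, combined with \cref{cor:stratfiniteproduct} to handle the finite product on the target.

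Part (a) is essentially immediate from the work already done in \cref{sec:ttgeometry} and \cref{ssec:chouinard}. Indeed, \cref{thm:derivedchouinard} furnishes a conservative finite \'etale tt-functor
\[
\res\colon \Rep(G,R) \to \prod_{E \in \cat E(G)} \Rep(E,R),
\]
and under the stratification hypothesis on each factor $\Rep(E,R)$, \cref{cor:stratfiniteproduct} gives stratification of the finite product, from which \cref{thm:etaledescent} yields stratification of $\Rep(G,R)$.

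For part (b), I would mimic the same argument for the compatible restriction functor
\[
\res_{\StMod}\colon \StMod(G,R) \to \prod_{E \in \cat E(G)} \StMod(E,R)
\]
furnished by \cref{prop:basechangecompatibilities} and \cref{cor:stratfiniteproduct}, so the task reduces to showing $\res_{\StMod}$ is again a conservative finite \'etale tt-functor. For the \'etale property, $\res_{\Rep}$ is by construction base-change along the compact commutative separable algebra $\cA(G,R) = \prod_E i_* i^* R$ of finite degree in $\Rep(G,R)$ (see \cref{prop:restrictionetale} and the preamble to \cref{cor:descendable}); since $\rho^*$ is a smashing localization and in particular a tt-functor, it preserves compactness, commutativity, separability, and finite degree. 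Combining this with a base-change equivalence
\[
\prod_{E \in \cat E(G)} \StMod(E,R) \simeq \Mod_{\StMod(G,R)}(\rho^* \cA(G,R))
\]
realizes $\res_{\StMod}$ as base-change along the finite \'etale algebra $\rho^* \cA(G,R)$. For conservativity I would invoke the descendability of $\cA(G,R)$ in $\Rep(G,R)$ (\cref{cor:descendable}): descendability passes through tt-functors, so $\rho^* \cA(G,R)$ remains descendable in $\StMod(G,R)$ and in particular base-change along it is conservative. Then \cref{thm:etaledescent} together with \cref{cor:stratfiniteproduct} concludes the proof.

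The main obstacle I expect is verifying the base-change identification $\prod_E \StMod(E,R) \simeq \Mod_{\StMod(G,R)}(\rho^* \cA(G,R))$ and that it intertwines $\res_{\StMod}$ with base-change along $\rho^* \cA(G,R)$. This reduces to checking, for each subgroup $H \le G$, that under the equivalence $\Rep(H,R) \simeq \Mod_{\Rep(G,R)}(i_*R)$ of \cref{prop:restrictionetale} the kernel localizing ideal $\Loco\langle R[G]\rangle$ corresponds to $\Loco\langle R[H]\rangle$. This in turn follows from the coset decomposition $i^* R[G] \cong \bigoplus R[H]$ and the identification $i_* R[H] \cong R[G]$ recorded in the proof of \cref{prop:basechangecompatibilities}, so the finite \'etale structure on $\res_\Rep$ localizes cleanly along $\rho^*$ as required.
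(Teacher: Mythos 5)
Your proposal is correct and follows essentially the same route as the paper: part (a) is \cref{thm:derivedchouinard} plus \cref{cor:stratfiniteproduct} and \cref{thm:etaledescent}, and part (b) localizes the finite \'etale structure along $\rho^*$ with conservativity supplied by the descendability of $\cA(G,R)$. The only cosmetic difference is that you verify the compatibility of the \'etale algebra with the finite localization by hand (matching the kernel ideals via $i^*R[G]\cong\bigoplus R[H]$ and $i_*R[H]\cong R[G]$), whereas the paper checks the equivalent support condition $\Spc(i^*)^{-1}(\supp(R[G]))=\supp(R[H])$ and cites the base-change criterion of \cite[Ex.~5.12]{Sanders21pp}.
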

\begin{proof}
\cref{thm:derivedchouinard} shows that $\res$ satisfies the assumptions of \cref{thm:etaledescent}, hence $\Rep(G,R)$ is stratified if the tt-categories $\Rep(E,R)$ are stratified for all $E \in \cat E(G)$.

Since $\Spc(i^*)^{-1}(\supp(R[G])) = \supp(i^*R[G]) = \supp(R[H])$, the left square in~\eqref{eq:rhobasechange} satisfies the condition of the base-change criterion for finite \'etale tt-functors established in \cite[Ex.~5.12]{Sanders21pp}. It follows that the functor $i^*$ between stable module categories is also finite \'etale. Moreover, the corresponding descendable algebra object is $\rho^*(\cA(G,R)) \in \CAlg(\stmod(G,R))$. \Cref{thm:etaledescent} finishes the proof of Part (b).
\end{proof}

\begin{Rem}
This theorem is analogous to the reduction step of \cite{BensonIyengarKrause11a}. However, we note that, in contrast to their proof, we have not used any form of Quillen stratification; instead, the arguments are tt-theoretic and rely on \cref{thm:etaledescent}. In fact, we do not know in which generality Quillen's stratification theorem (in the form of \cite[Thm.~10.2]{Quillen71}) holds. It is plausible that at least for Dedekind domains, the assertion can be assembled by hand from the field case.
\end{Rem}

\section{Modular lifting for elementary abelian groups}

In this section we will study stratification for elementary abelian groups. Typically, we will fix a prime $p$, denote elementary $p$-groups by the letter $E$ and write $r = \rank(E)$ for their rank. In addition, the following convention will be in place:

\begin{Conv}
Throughout this section, we will take the coefficient ring $A$ to be a complete discrete valuation ring (DVR) of mixed characteristic and with perfect residue field $k$ of characteristic $p$. 
\end{Conv}

The structure of such rings is well understood. According to \cite[Thm.~II.4]{serre_localfields}, there exists a unique ring homomorphism $j\colon W(k) \to A$ which commutes with the reduction modulo the maximal ideal. Here, $W(k)$ denotes the ring of $p$-typical Witt vectors on $k$, where $p$ is the characteristic of $k$. Via $j$, the $W(k)$-module $A$ is free of rank the ramification index $e$ of $A$. With $K$ denoting the fraction field of $A$, the triple $(K,A,k)$ is also commonly referred to as a $p$-modular system in the representation theory literature.

\subsection{Cohomology of elementary abelian groups}

We collect some basic facts about the cohomology of elementary abelian groups. Let us fix a prime $p$. 

\begin{Lem}\label{lem:elabcohom}
Let $E$ be an elementary abelian $p$-group. The integral cohomology satisfies the following properties:
    \begin{enumerate}
        \item $H^0(E;A) \cong A$;
        \item $H^1(E;A) = 0$;
        \item $p\cdot H^{*>0}(E;A) = 0$.
    \end{enumerate}
\end{Lem}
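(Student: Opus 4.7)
The statements separate into three easy pieces, and I would handle them one by one.

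Part (a) is immediate from the definition, since $A$ carries the trivial $E$-action and so $H^0(E;A) = A^E = A$. Part (b) is equally short: the abelianization of $E$ is $E$ itself, so $H^1(E;A) = \Hom_{\Ab}(E,A)$, and since $E$ is a finite $p$-torsion group while $A$ is torsion-free (being a DVR of characteristic $0$), there are no non-trivial homomorphisms $E \to A$.

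For part (c), which is the actual content of the lemma, the plan is to reduce to a statement about integral homology of $E$ via the universal coefficient theorem. More precisely, pick a projective resolution $P_\bullet \to \bbZ$ over $\bbZ[E]$; because $E$ acts trivially on $A$, we may rewrite the defining cochain complex as $\Hom_{\bbZ[E]}(P_\bullet, A) \cong \Hom_{\bbZ}((P_\bullet)_E, A)$, so the UCT for the complex $(P_\bullet)_E$ of abelian groups yields a natural short exact sequence
\[
0 \to \Ext^1_{\bbZ}(H_{n-1}(E;\bbZ),A) \to H^n(E;A) \to \Hom(H_n(E;\bbZ),A) \to 0.
\]
Hence it suffices to show that $H_n(E;\bbZ)$ is annihilated by $p$ for every $n\ge 1$: the $\Hom$-term then vanishes (since $A$ is torsion-free), while $\Ext^1_{\bbZ}(H_{n-1}(E;\bbZ),A)$ is a quotient of a direct sum of copies of $A/pA$ by the elementary description of $\Ext^1_{\bbZ}(\bbZ/p,-)$, and so is killed by $p$.

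The annihilation of $H_{\ge 1}(E;\bbZ)$ by $p$ is the classical fact I would cite (or reprove in a line): for $E = \bbZ/p$ one has $H_n(\bbZ/p;\bbZ) = \bbZ/p$ for $n$ odd and $0$ for $n>0$ even, as read off from the periodic resolution; the general case $E = (\bbZ/p)^r$ follows by induction using the Künneth formula, since the tensor product and the $\Tor^{\bbZ}_1$ of two $p$-torsion groups are again $p$-torsion. The only mild care required is making sure the UCT is invoked correctly for cohomology with coefficients in a torsion-free but non-finitely generated $\bbZ$-module, which is standard. There is no serious obstacle; the argument is a bookkeeping exercise once one knows $H_{\ge 1}(E;\bbZ)$ is $p$-torsion.
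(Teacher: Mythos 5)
Your proof is correct, and for part (c) it takes a slightly different route from the paper. The paper proves (c) by induction on the rank $r$ directly at the level of $H^*(E;A)$: the base case $E=\bbZ/p$ is read off from the periodic resolution (odd cohomology vanishes by torsion-freeness of $A$, positive even cohomology is $A/pA$), and the inductive step uses the K\"unneth short exact sequence with coefficients in $A$. You instead pass through the universal coefficient theorem to reduce everything to the classical fact that $H_{\ge 1}(E;\bbZ)$ is killed by $p$, and run the K\"unneth induction over $\bbZ$. The trade-off is mild: your route keeps the K\"unneth argument entirely over $\bbZ$ for finite groups, where no flatness or finiteness hypotheses on the coefficients intervene, at the cost of invoking the UCT with coefficients in the (generally non-finitely-generated) $\bbZ$-module $A$ --- which, as you note, is legitimate since the coinvariants $(P_\bullet)_E$ of a free resolution form a complex of free abelian groups. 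Your identifications are all sound: for $n\ge 2$ the group $H_{n-1}(E;\bbZ)$ is a finite direct sum of copies of $\bbZ/p$, so the $\Ext^1$-term is a finite direct sum of copies of $A/pA$ and hence killed by $p$, while the $\Hom$-term vanishes by torsion-freeness of $A$; and for $n=1$ the sequence gives $H^1(E;A)=0$ outright, reproving (b). Either argument is acceptable here.
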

\begin{proof}
The first property is true by definition, while (b) holds since $H^1(E;A) \cong \Hom(E,A) = 0$ as $A$ has characteristic $0$. The last statement is proven by induction on $r=\rank(E)$. The base case $r=1$ is a direct computation, while the induction step follows from the (non-canonically split) K\"unneth short exact sequence. 
\end{proof}

\begin{Rem}
By a theorem of Adem \cite[Thm.~2.1]{adem_exponent}, a finite $p$-group $G$ satisfies $p\cdot H^{*>0}(G;\Z) = 0$ if and only if it is elementary abelian. 
\end{Rem}

\begin{Lem}\label{lem:elabtatecohom}
If $E$ is an elementary abelian $p$-group of rank $r$, then $p^r\cdot\tH^*(E;A)=0$. In particular, multiplication by $p^r$ is null on $A^{tE}$.
\end{Lem}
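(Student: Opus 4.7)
The plan is to deduce both assertions from the graded-ring structure on Tate cohomology (respectively on $A^{tE}$) together with a single computation in degree zero. First I would recall that, because $E$ acts trivially on $A$, the norm map $N\colon A_E \to A^E$ is simply multiplication by $|E| = p^r$. Hence
\[
\tH^0(E;A) \;=\; A^E/\mathrm{im}(N) \;=\; A/p^r A,
\]
so the unit of this ring satisfies $p^r \cdot 1 = 0$.

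The cup product endows $\tH^*(E;A)$ with the structure of a graded-commutative ring with unit $1 \in \tH^0(E;A)$. Consequently, for any $x \in \tH^n(E;A)$ we have
\[
p^r \cdot x \;=\; (p^r \cdot 1) \cup x \;=\; 0,
\]
which establishes the first claim. Note that \cref{lem:elabcohom}(c) yields a sharper bound by $p$ in positive degrees, but is not actually required for the statement being proved.

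For the second assertion, I would invoke the fact that $(-)^{tE}$ is lax symmetric monoidal (as recalled above \eqref{eq:tatecohom}), which upgrades $A^{tE}$ to an $E_\infty$-ring spectrum with $\pi_0 A^{tE} \cong \tH^0(E;A) = A/p^r A$. Self-maps of $A^{tE}$ in the category of $A^{tE}$-modules are classified by $\pi_0 A^{tE}$, and the multiplication-by-$p^r$ endomorphism corresponds to the image of $p^r$ under the unit $\mathbb{Z} \to \pi_0 A^{tE}$, which vanishes by the computation just performed. Hence $p^r$ is null on $A^{tE}$ as a map of spectra.

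I do not anticipate any substantive obstacle. The only mildly subtle point is the passage from the vanishing of $p^r$ on homotopy groups to its nullness as a self-map of the spectrum $A^{tE}$, but this is formal once one uses that $A^{tE}$ is a ring spectrum and identifies module self-maps with $\pi_0$.
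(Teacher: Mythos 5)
Your proof is correct, and the first half takes a genuinely different route from the paper's. For the vanishing of $p^r\cdot\tH^*(E;A)$ you use the standard "exponent divides group order" argument: the unit $1\in\tH^0(E;A)=A/p^rA$ satisfies $p^r\cdot 1=0$ because the norm for a trivial action is multiplication by $|E|=p^r$, and cup product with the unit then kills $p^r\cdot x$ in every degree. This is more elementary and more general — it needs nothing about $E$ beyond $|E|=p^r$, and in particular does not use \cref{lem:elabcohom}(c). The paper instead quotes \cref{lem:elabcohom}(c) to get the sharper statement $p\cdot\tH^*(E;A)=0$ in degrees $\neq -1,0$ (a bound that genuinely requires $E$ elementary abelian, by Adem's theorem cited in the paper) and handles degrees $-1,0$ separately via the norm; the extra precision is not needed for the lemma as stated, exactly as you observe. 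For the second claim your argument coincides with the paper's: $A^{tE}$ is a ring spectrum, multiplication by $p^r$ is the image of $p^r$ in $\pi_0A^{tE}\cong\tH^0(E;A)=A/p^rA$, which vanishes, so the self-map is null. No gaps.
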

\begin{proof}
This result follows from the definition of Tate cohomology. Indeed, by \cref{lem:elabcohom}(c), we have $p\cdot\tH^*(E;A) = 0$ for $* \neq -1,0$. Since the action of $E$ on $A$ is trivial, the norm map is multiplication by $p^r$ on $A$, which implies the claim for $*=-1,0$. Now $A^{tE}$ is a commutative ring spectrum with $p^r$ acting by zero on $\pi_0A^{tE} \cong \tH^0(E;A)$, see \eqref{eq:tatecohom}, hence $p^r$ is null on $A^{tE}$.
\end{proof}

\begin{Rem}
The Tate cohomology of an elementary abelian $p$-group is Noetherian if and only if its rank is $r=1$. This follows from the fact that a finite abelian group which is not cyclic will not have periodic cohomology. 
\end{Rem}

\subsection{Reduction to the residue field}

Recall that $A$ is a complete DVR of mixed characteristic with maximal ideal $\frak p$ and residue field $A/\frak p=k$ of characteristic $p$. Let $\pi$ be a uniformizer of $A$ so that $(\pi) = \frak p$ and denote by $e$ the ramification index of $p$ in $A$, so that we can write $pA = \frak p^e$. We continue to let $E$ be an elementary abelian $p$-group. The goal of this subsection is to prove the following result:

\begin{Prop}\label{prop:elabtategeneration}
In the category of $A^{tE}$-modules, we have $A^{tE} \in \Thick(k^{tE})$.
\end{Prop}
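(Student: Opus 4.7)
The plan is to exhibit $A^{tE}$ as a retract of some module that is visibly built out of copies of $k^{tE}$. The crucial input is the nilpotence bound from Lemma 4.5: multiplication by $p^r$ is null on the ring spectrum $A^{tE}$. Writing $p = u\pi^e$ for a unit $u \in A$, this translates into the statement that $\pi^{er}$ acts as zero on $A^{tE}$.

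First, I would consider the short exact sequence of $A$-modules (with trivial $E$-action)
\[
0 \to A \xrightarrow{\pi^{er}} A \to A/\pi^{er} \to 0
\]
and apply the exact functor $(-)^{tE}$, which takes values in $A^{tE}$-modules. This yields a cofiber sequence
\[
A^{tE} \xrightarrow{\pi^{er}} A^{tE} \to (A/\pi^{er})^{tE}
\]
of $A^{tE}$-modules. By the preceding paragraph, the left-hand map is null, so the cofiber splits to give an equivalence of $A^{tE}$-modules
\[
(A/\pi^{er})^{tE} \simeq A^{tE} \oplus \Sigma A^{tE}.
\]
In particular $A^{tE}$ is a retract of $(A/\pi^{er})^{tE}$.

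Next, I would exploit the obvious finite filtration of $A/\pi^{er}$ by the submodules $\pi^i A/\pi^{er}$ for $0 \leq i \leq er$, whose successive quotients are all isomorphic to $A/\pi = k$ as $A$-modules (with trivial $E$-action). Applying the exact functor $(-)^{tE}$ produces a corresponding finite filtration of $(A/\pi^{er})^{tE}$ by $A^{tE}$-modules in which each subquotient is $k^{tE}$. Iteratively using closure of thick subcategories under cofibres therefore gives $(A/\pi^{er})^{tE} \in \Thick(k^{tE})$. Since thick subcategories are closed under retracts, this forces $A^{tE} \in \Thick(k^{tE})$, as desired.

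I do not expect a real obstacle here: the argument is essentially just the combination of the nilpotence bound for $p^r$ from Lemma 4.5 with the $\pi$-adic filtration of $A/\pi^{er}$. The only point worth checking carefully is that the lax symmetric monoidality of $(-)^{tE}$ does promote these cofibre sequences of spectra to cofibre sequences of $A^{tE}$-modules, but this is immediate from the fact that $(-)^{tE}$ applied to any $A$-module with trivial $E$-action naturally lands in $A^{tE}$-modules.
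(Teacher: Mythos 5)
Your proposal is correct and follows essentially the same route as the paper's proof: both use the nilpotence of $p^r$ on $A^{tE}$ from \cref{lem:elabtatecohom} to exhibit $A^{tE}$ as a retract of $(A/\pi^{er})^{tE}$, and then run the $\pi$-adic filtration of $A/\pi^{er}$ with subquotients $k$ through the exact, lax symmetric monoidal functor $(-)^{tE}$ to place that module in $\Thick(k^{tE})$. The only cosmetic difference is that you phrase the nullity in terms of $\pi^{er}$ from the outset, whereas the paper first uses $p^r$ and then identifies $A/p^r \cong A/\pi^{er}$.
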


We record a direct consequence. Write $\alpha\colon A^{tE} \to k^{tE}$ for the map of commutative ring spectra induced by the projection $A \to k$.

\begin{Cor}\label{cor:elabtateconservative}
Both induction and coinduction $\alpha^*,\alpha^!\colon \Mod(A^{tE})\to\Mod(k^{tE})$ along $\alpha$ are conservative.
\end{Cor}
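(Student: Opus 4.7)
The plan is to derive both statements as direct formal consequences of \cref{prop:elabtategeneration} via a standard thick subcategory argument. For the ring map $\alpha\colon A^{tE} \to k^{tE}$, induction and coinduction take the form $\alpha^*(-) = k^{tE}\otimes_{A^{tE}}(-)$ and $\alpha^!(-) = \Hom_{A^{tE}}(k^{tE},-)$, both regarded as functors to $\Mod(k^{tE})$. Since the forgetful functor $\alpha_*\colon \Mod(k^{tE}) \to \Mod(A^{tE})$ is conservative, it suffices to prove that these composites, viewed as functors to $\Mod(A^{tE})$, are conservative.

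For induction, suppose $M \in \Mod(A^{tE})$ satisfies $k^{tE}\otimes_{A^{tE}}M \simeq 0$. I would consider the full subcategory
\[
\cat C_M = \SET{N \in \Mod(A^{tE})}{N \otimes_{A^{tE}} M \simeq 0}.
\]
This is a thick subcategory of $\Mod(A^{tE})$ since $-\otimes_{A^{tE}}M$ is exact and preserves retracts, and by hypothesis $k^{tE} \in \cat C_M$. Applying \cref{prop:elabtategeneration} then forces $A^{tE} \in \cat C_M$, hence $M \simeq A^{tE}\otimes_{A^{tE}}M \simeq 0$. The coinduction case is entirely analogous: one replaces $\cat C_M$ by the thick subcategory $\SET{N \in \Mod(A^{tE})}{\Hom_{A^{tE}}(N,M) \simeq 0}$, observes it contains $k^{tE}$ by the vanishing of $\alpha^!M$, invokes the same proposition, and concludes $M \simeq \Hom_{A^{tE}}(A^{tE},M) \simeq 0$.

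No substantive obstacle is anticipated; the result is essentially a formal corollary of the generation statement. The only point worth a moment of care is verifying that the subcategories in question really are thick, which is immediate from the exactness and retract-stability of the tensor product and the mapping spectrum functor on $A^{tE}$-modules.
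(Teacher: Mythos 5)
Your argument is correct and is precisely the thick-subcategory argument the paper intends when it records the corollary as a "direct consequence" of \cref{prop:elabtategeneration} without further proof: the kernel of $-\otimes_{A^{tE}}M$ (resp.\ of $\Hom_{A^{tE}}(-,M)$) is thick, contains $k^{tE}$ by hypothesis, hence contains $A^{tE}$, forcing $M\simeq 0$. No issues.
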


\begin{proof}[Proof of \cref{prop:elabtategeneration}]
We will prove this result in three steps: First, we reduce to $(A/p^r)^{tE}$ and then to $k^{tE}$. To simplify notation, we will denote the Eilenberg--MacLane spectrum associated to a discrete ring $R$ by $R$ as well and thus work intrinsically to the $\infty$-category of spectra.

Consider the cofiber sequence $A \xrightarrow{p^r} A \to A/p^r$ in $\Mod(A)$. Since $(-)^{tE}$ is lax symmetric monoidal, we obtain a cofiber sequence
\[
\xymatrix{A^{tE} \ar[r]^-{p^r} & A^{tE} \ar[r] & (A/p^r)^{tE}}
\]
in $\Mod(A^{tE})$. According to \cref{lem:elabtatecohom}, multiplication by $p^r$ is null on $A^{tE}$, so the above cofiber sequence exhibits $A^{tE}$ as a retract of $(A/p^r)^{tE}$.

Since $(p) = (\pi)^e$, we get an isomorphism $A/p^r \cong A/\pi^{e \cdot r}$. The $A$-module $A/\pi^{e\cdot r}$ admits a (non-split) finite filtration
\begin{equation}\label{eq:finitefiltration}
0 \subset A/\pi \subset A/\pi^2 \subset \ldots \subset A/\pi^{e\cdot r-1} \subset A/\pi^{e\cdot r}
\end{equation}
with filtration quotients $A/\pi\cong k$. Applying the $E$-Tate construction to the corresponding cofiber sequences gives rise to cofiber sequences in $\Mod(A^{tE})$,
\[
(A/\pi^{j-1})^{tE} \to (A/\pi^{j})^{tE} \to k^{tE},
\]
for $1\le j \le r\cdot e$. These cofiber sequences exhibit $(A/\pi^{r\cdot e})^{tE} \in \Thick(k^{tE})$. In summary, we see that $A^{tE} \in \Thick(k^{tE})$ in the category of $A^{tE}$-modules.
\end{proof}

\subsection{Generic modular lifting}

In this subsection, we will use nil-descent to establish stratification for $\StMod(E,A)$ for $E$ an elementary abelian $p$-group and $A$ a complete DVR of mixed characteristic with perfect residue field $k$ of characteristic~$p$. This result may be interpreted as a form of modular lifting for representations from $k$ to $A$ up to projective representations and tt-operations. 

We first establish an auxiliary result. Let $K$ be the fraction field of $A$. The augmentation $A^{hE} \to A$ induces an injection on homogeneous spectra
\[
\Spec(A) \to \Spec^h(H^*(E;A)).
\]
We denote by $\eta_{K}$ the image of $(0)$ under this map.

\begin{Lem}\label{lem:quotmapelabcohom}
The quotient map $\alpha\colon A \to k$ induces an injective continuous map on homogeneous spectra $\Spec(H^*(E;k)) \to \Spec^h(H^*(E;A))$
whose image misses precisely the point $\eta_{K}$.
\end{Lem}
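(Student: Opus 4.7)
Let $R = H^*(E;A)$ and $S = H^*(E;k)$, and write $\alpha_*\colon R \to S$ for the ring homomorphism induced by $\alpha$. My plan proceeds in three main steps.

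First, I apply the Bockstein long exact sequence associated with $0 \to A \xrightarrow{\pi} A \to k \to 0$ to identify $\ker(\alpha_*) = \pi R$, so $\alpha_*$ factors as an injective homomorphism $\bar\alpha_*\colon R/\pi R \hookrightarrow S$ of graded rings. Correspondingly, $\alpha^*$ factors as
\[
\Spec^h(S) \to \Spec^h(R/\pi R) \cong V(\pi) \hookrightarrow \Spec^h(R),
\]
where $V(\pi) = \{\mathfrak{p} \in \Spec^h(R) : \pi \in \mathfrak{p}\}$. Second, I identify $V(\pi) = \Spec^h(R) \setminus \{\eta_K\}$: by \cref{lem:elabcohom}(c), $p$ annihilates $R^{>0}$, so any homogeneous prime $\mathfrak{p}$ with $\pi \notin \mathfrak{p}$ cannot contain $p = \pi^e$ either, and hence must contain all of $R^{>0}$; since $\mathfrak{p} \cap A$ is then a prime of $A$ avoiding $\pi$, it must be $(0)$, forcing $\mathfrak{p} = R^{>0} = \eta_K$.

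The heart of the argument is the third step: showing that $\bar\alpha_*$ induces a bijection $\Spec^h(S) \to \Spec^h(R/\pi R)$. The strategy is to prove that every homogeneous element of $S$ has a positive power in the image of $\bar\alpha_*$; given this, a standard argument combining going-up for the resulting integral extension with the prime-power trick for injectivity yields bijectivity on homogeneous spectra. I verify the power condition using the explicit structure of $H^*(E;k)$. For $p$ odd, $S \cong \Lambda_k(y_1, \ldots, y_r) \otimes k[x_1, \ldots, x_r]$ with $|y_i|=1$ and $|x_i|=2$; the $y_i$'s square to zero by graded commutativity, while the $x_i$'s are mod-$p$ Bocksteins of the $y_i$'s and hence lift to classes in $H^2(E;\bbZ)$. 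By functoriality of cohomology along $\bbZ \to A \to k$, these lifts transport to preimages in $R$ whose $\alpha_*$-images are the $x_i$'s, so $k[x_1,\ldots,x_r] \subseteq \im\bar\alpha_*$. The Frobenius identity $(a+b)^p = a^p + b^p$ in characteristic $p$ then pushes sufficiently high $p$-powers of any element of $S$ into $k[x_i]$. For $p=2$, $S = k[x_1,\ldots,x_r]$ with $|x_i|=1$; since $H^1(E;A)=0$ by \cref{lem:elabcohom}(b), the $x_i$'s themselves do not lift, but their squares $x_i^2 = \beta(x_i)$ do, yielding $k[x_1^2,\ldots,x_r^2] \subseteq \im\bar\alpha_*$ and hence $f^2 \in \im\bar\alpha_*$ for every $f \in S$.

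The main subtlety I anticipate is that the Bockstein $\beta_\pi$ appearing in the sequence above need not be the standard mod-$p$ Bockstein $\beta$, especially in the ramified case $e > 1$. I sidestep this by passing through integral cohomology: one does not need to relate $\beta_\pi$ and $\beta$ directly, only to observe that the polynomial generators of $S$ lift to $H^*(E;\bbZ)$ and then transport to $R$ via the change of coefficients along $\bbZ \to A$, which is automatic from naturality of cohomology in the coefficient ring.
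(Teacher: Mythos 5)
Your proof is correct, but it is organized rather differently from the paper's. The paper factors $\alpha$ through $A/p^r$: the Bockstein sequence for multiplication by $p^r$, combined with \cref{lem:elabcohom}(c), splits to give $H^*(E;A/p^r)\cong \Lambda(\epsilon)\otimes H^*(E;A)$ (whence the injection on spectra missing exactly $\eta_K$), and then an induction along the filtration \eqref{eq:finitefiltration} shows $H^*(E;A/p^r)\to H^*(E;k)$ is an $F$-isomorphism, hence a homeomorphism on spectra. You instead use the single Bockstein sequence for $\pi$ to get the injection $R/\pi R\hookrightarrow S$, identify the complement of $V(\pi)$ as $\{\eta_K\}$ by the same $p\cdot R^{>0}=0$ observation, and then verify the ``every homogeneous element has a power in the image'' condition by hand, exhibiting explicit integral lifts of the polynomial generators of $H^*(E;k)$ via Bocksteins and routing through $H^*(E;\bbZ)$ to dodge the ramified-Bockstein issue — a nice touch. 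Both arguments are $F$-isomorphism arguments at heart; the paper's is coefficient-theoretic and uniform in $p$, while yours trades the intermediate ring $A/p^r$ and the filtration induction for the explicit K\"unneth/Cartan description of $H^*(E;\bbF_p)$. Two small points you should spell out when writing this up: the image of $\bar\alpha_*$ contains all of $k$ in degree $0$ (since $A\twoheadrightarrow k$), which you implicitly use to get the full polynomial subalgebra rather than just the $\bbF_p$-span of monomials; and when invoking Frobenius and lying-over you are in a graded-commutative, not commutative, ring, so you should note that for $p$ odd the odd-degree elements square to zero (hence lie in every homogeneous prime and are harmless for both the power condition and the integrality/lying-over step).
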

\begin{proof}
The map $\alpha\colon A^{hE} \to k^{hE}$ factors as a composite
\[
\xymatrix{A^{hE} \ar[r]^-{\alpha_1} & (A/p^r)^{hE} \ar[r]^-{\alpha_2} & k^{hE}.}
\]
First, consider the long exact sequence induced by $\alpha_1$:
\[
\xymatrix{\ldots \ar[r] & H^s(E;A) \ar[r]^-{\cdot p^r} & H^s(E;A) \ar[r]^-{H^s(\alpha_1)} & H^s(E;A/p^r) \ar[r] & \ldots.}
\]
Let $\Lambda(\epsilon)$ be an exterior algebra over $A$ with $\epsilon$ in degree -1. By virtue of \cref{lem:elabcohom}, this gives an isomorphism $H^s(E;A/p^r) \cong \Lambda(\epsilon) \otimes H^s(E;A)$ in positive degrees, while in degree $0$ it identifies with the projection $A \to A/p^r$. It follows that $\alpha_1$ induces an injection on homogeneous spectra that misses precisely the point $\eta_{K}$ corresponding to $K=\pi^{-1}A$ in degree 0. 

Second, using induction on the finite filtration \eqref{eq:finitefiltration}, we see that the ring map $H^*(E;\alpha_2)\colon H^*(E;A/p^r) \to H^*(E;k)$ is an $F$-isomorphism. Therefore, the induced map $\Spec^h(H^*(E;\alpha_2))$ is a homeomorphism, so the claim follows.
\end{proof}

The next result was originally proven by Benson, Iyengar, and Krause \cite[Thm.~8.1]{BensonIyengarKrause11a} using the Bernstein--Gelfand--Gelfand correspondence (see \cite{abim_bgg}). For completenes, we include the sketch of an alternative, homotopy-theoretic argument due to Mathew, which appeared in his unpublished manuscript \cite{mathew_torus}.

\begin{Thm}[Benson--Iyengar--Krause]\label{thm:elabbikstrat}
Let $k$ be a field of characteristic $p$ and $E$ an elementary abelian $p$-group of rank $r$, then the category $\StMod(E,k)$ is stratified over $\Spec^h(H^*(E;k))\setminus \Spec^h(k) \cong \mathbb{P}_k^{r-1}$.
\end{Thm}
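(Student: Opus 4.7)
The plan is to verify the stratification condition for $\StMod(E,k)$ over its Balmer spectrum, which by \cref{cor:spcstmod} identifies with $\Proj H^*(E;k) \cong \mathbb{P}_k^{r-1}$. Since this space is Noetherian, the local-to-global principle of Stevenson cited in \cref{ssec:stratificationbasics} applies automatically, so it suffices to verify minimality of $\Gamma_\frak p \StMod(E,k)$ for every $\frak p \in \mathbb{P}_k^{r-1}$. I would first use Zariski descent (\cref{thm:zariskidescent}) to cover $\mathbb{P}_k^{r-1}$ by standard affine opens $D_+(\zeta)$ for homogeneous elements $\zeta \in H^{*>0}(E;k)$, reducing the problem to analyzing each localized category individually.

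The main structural input is the equivalence $\StMod(E,k) \simeq \Mod_{\Sp_E}(\underline{k} \otimes \widetilde{EE})$ from \cref{cor:eqmoritastmod}, whose unit has endomorphism ring spectrum $k^{tE}$ with $\pi_* k^{tE} = \hat H^*(E;k)$. For $E$ elementary abelian in characteristic $p$, the $E_\infty$-ring spectrum $C^*(BE;k)$ is formal (via a Mandell-style argument, or, in a related setting, the technique of Mathew's unpublished torus manuscript \cite{mathew_torus}), so $k^{hE}$ and hence $k^{tE}$ are equivalent as $E_\infty$-rings to their (Tate) cohomology rings with trivial differential. This identifies the tt-category $\Mod(k^{tE})$, up to Koszul/derived-Morita equivalence, with a suitable module category over the polynomial-with-exterior factor $H^*(E;k)$.

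From this identification, localizing at a point $\frak p \in \Proj H^*(E;k)$ produces a ring spectrum whose homotopy, after appropriate inversion and residue, is a graded field $\kappa(\frak p)$; then $\Gamma_\frak p \StMod(E,k)$ is equivalent to (graded) $\kappa(\frak p)$-modules. Minimality at $\frak p$ follows by applying \cref{lem:minimality_crit}: for any two nonzero objects $t_1, t_2$, a Hom-nonvanishing witness $z$ is produced by the fact that the category of modules over a graded field has a unique nonzero minimal localizing tensor ideal. Assembling these local verifications over the Zariski cover recovers the full stratification of $\StMod(E,k)$.

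The principal obstacle is the formality step: one must establish that $C^*(BE;k)$ is formal as an $E_\infty$-algebra in a manner compatible with the symmetric monoidal structure, so that the identification of the localized module categories with graded-field modules respects tensor products and not just the underlying triangulated category. This is precisely where Mathew's homotopical argument provides leverage over the BGG-based approach of \cite{BensonIyengarKrause11a}: the formality, plus a clean derived Morita reduction of $\Gamma_\frak p(-)$ at each point, bypasses the explicit exterior-to-polynomial Koszul duality in favor of an $E_\infty$-ring-spectrum calculation. Once this formality input is granted, the rest of the argument is essentially a pointwise module-theoretic calculation, and the theorem follows.
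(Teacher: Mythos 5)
Your proposal correctly reduces to minimality at each point of $\mathbb{P}_k^{r-1}$ and correctly identifies $\StMod(E,k)\simeq\Mod(k^{tE})$, but the step you yourself flag as ``the principal obstacle'' --- formality of $C^*(BE;k)$, hence of $k^{tE}$, as an $E_\infty$-ring --- is a genuine gap, not a technicality that a ``Mandell-style argument'' supplies. For odd $p$ the Bockstein acts nontrivially on $H^*(BE;\bbF_p)\cong\Lambda(\epsilon_1,\dots,\epsilon_r)\otimes\bbF_p[x_1,\dots,x_r]$ (with $\beta\epsilon_i=x_i$), whereas on the cochains of a space like $(S^1\times\mathbb{CP}^\infty)^{\times r}$, which has the same homotopy ring, it vanishes; since Steenrod/power operations are invariants of the $E_\infty$-$\bbF_p$-algebra structure, at most one of these two cochain algebras can be formal, and Mandell's theorem in fact says $C^*(BE;\overline{\bbF}_p)$ remembers the entire $p$-adic homotopy type of $BE$. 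Moreover, even granting an $E_\infty$-identification of $k^{tE}$ with its homotopy ring, you would not get ``modules over a graded field'' after localizing at $\frak p$, because $\hat{H}^*(E;k)$ has odd-degree exterior classes; the presence of those classes is precisely why \cite{BensonIyengarKrause11a} need the BGG correspondence and why a naive pointwise module-theoretic calculation does not close.

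The paper's (Mathew's) argument is built to sidestep exactly this point: one does \emph{not} analyze $k^{tE}$ directly, but observes that the inclusion $E=(\bbZ/p)^{\times r}\hookrightarrow (S^1)^{\times r}$ exhibits $k^{t(S^1)^{\times r}}\to k^{tE}$ as a faithful Galois extension with \emph{connected} Galois group $(S^1)^{\times r}$ in the sense of Rognes. Galois descent along a connected finite complex (Thm.~6.5 of \cite{mathew_torus}) identifies the lattice of localizing tensor ideals of $\Mod(k^{tE})$ with that of $\Mod(k^{t(S^1)^{\times r}})$, and the latter ring spectrum has homotopy $k[z_1^{\pm1},\dots,z_r^{\pm1}]$, regular Noetherian and concentrated in \emph{even} degrees --- for such ring spectra the algebraicity you were trying to extract from formality genuinely holds, giving stratification over $\mathbb{P}_k^{r-1}$. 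So the torus is not an alternative route to formality of $k^{tE}$; it is the replacement for it. To repair your argument you must either import the BGG-based proof of \cite{BensonIyengarKrause11a} or insert this Galois descent step; as written, the formality claim on which everything rests is unsupported and, for odd $p$, false.
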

\begin{proof}[Sketch of alternative proof (Mathew)]
Since $E$ is a $p$-group, there is a canonical symmetric monoidal equivalence $\StMod(G,k) \simeq \Mod(k^{tE})$. The standard inclusion $E\cong (\bbZ/p)^{\times r} \to (S^1)^{\times r}$ induces a morphism of commutative ring spectra $k^{t(S^1)^{\times r}} \to k^{tE}$. This map exhibits $k^{tE}$ as a faithful Galois extension of $k^{t(S^1)^{\times r}}$ with Galois group $(S^1)^{\times r}$ in the sense of Rognes \cite{Rognes08}.

Since the Galois group is a connected finite complex, by \cite[Thm.~6.5]{mathew_torus} the localizing tensor ideals of $\Mod(k^{tE})$ are in one-to-one correspondence with the localizing tensor ideals of $\Mod(k^{t(S^1)^{\times r}})$. We compute $\pi_*k^{t(S^1)^{\times r}} \cong k[z_1^{\pm 1},\ldots,z_r^{\pm 1}]$ with all $z_i$ of degree~2, so this graded commutative ring is regular Noetherian and concentrated in even degrees. This implies that  $\Mod(k^{t(S^1)^{\times r}})$ is stratified over projective space $\mathbb{P}_k^{r-1}$, so the claim follows as in \cite[Thm.~6.8]{mathew_torus}.
\end{proof}

\begin{Thm}\label{thm:elabstmodstratification}
Let $A$ be a complete DVR of mixed characteristic $(0,p)$ and $E$ an elementary abelian $p$-group. Then the tt-category $\StMod(E,A)$ is stratified.
\end{Thm}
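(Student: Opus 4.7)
The plan is to invoke nil-descent (\cref{thm:nildescent}) for the tt-functor $\alpha^*\colon \StMod(E,A) \to \StMod(E,k)$ induced by the reduction $\alpha\colon A \to k$, bootstrapping from the stratification of $\StMod(E,k)$ provided by \cref{thm:elabbikstrat}. Writing $(\alpha^*,\alpha_*,\alpha^!)$ for the resulting triple of adjoints, the required hypotheses are that $\alpha^*$ induces a bijection on Balmer spectra and that each of $\alpha^*$, $\alpha_*$, $\alpha^!$ is conservative; Noetherianity of $\Spc(\stmod(E,k))$ is immediate from \cref{cor:spcstmod} and \cref{thm:lau}.

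The bijection on Balmer spectra will be read off directly from \cref{cor:spcstmod} and \cref{lem:quotmapelabcohom}. The former identifies the two spectra with the complements of $\Spec(k)$ and $\Spec(A)$ in the respective homogeneous cohomology spectra, while the latter says that $\alpha$ induces an injection $\Spec^h(H^*(E;k)) \hookrightarrow \Spec^h(H^*(E;A))$ whose image misses precisely the generic point $\eta_K$ of $\Spec(A)$. Since the closed point of $\Spec(k)$ maps to the closed point of $\Spec(A)$, removing the two subspectra accounts for exactly one extra point on the target and turns the injection into a bijection.

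The three conservativity statements all reduce to the internal generation claim that $A \in \Thick(k)$ inside $\stmod(E,A)$. I expect the main obstacle to be establishing exactly this statement, for which the plan is to transplant the argument of \cref{prop:elabtategeneration} from $\Mod(A^{tE})$ to $\stmod(E,A)$ itself: since $\End_{\stmod(E,A)}(A)=\pi_0 A^{tE}$ satisfies $p^r=0$ by \cref{lem:elabtatecohom}, the cofiber sequence $A \xrightarrow{p^r} A \to A/p^r$ splits $A$ off $A/p^r$ in $\stmod(E,A)$, and the finite filtration \eqref{eq:finitefiltration} of $A/p^r$ with $k$-quotients then places $A/p^r$, and hence $A$, in $\Thick(k)$. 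Tensoring this containment with an arbitrary $M \in \StMod(E,A)$ and applying the projection formula $\alpha_*\alpha^*M \simeq M\otimes_A k$ gives conservativity of $\alpha^*$; the dual manipulation, using contravariant exactness of $\iHom(-,M)$ to pass $A \in \Thick(k)$ to $M = \iHom_A(A,M) \in \Thick(\iHom_A(k,M))$ together with $\alpha_*\alpha^!M \simeq \iHom_A(k,M)$, handles $\alpha^!$. For $\alpha_*$, the length-one free resolution $A\xrightarrow{\pi}A$ of $k$ over the DVR $A$ gives $k\otimes_A^L k \simeq k\oplus\Sigma k$, so the projection formula yields $\alpha^*\alpha_* N \simeq N \oplus \Sigma N$ in $\StMod(E,k)$, forcing $N=0$ whenever $\alpha_* N=0$.

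Once the internal generation $A \in \Thick(k)$ is in hand, the remaining assembly is formal: nil-descent lifts the field-coefficient stratification of \cref{thm:elabbikstrat} to the mixed-characteristic stratification of $\StMod(E,A)$. This is precisely the generic modular lifting alluded to in \cref{thm:e} and gives the theorem.
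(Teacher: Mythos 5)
Your proposal is correct and follows the paper's strategy exactly: nil-descent along $\alpha^*\colon \StMod(E,A)\to\StMod(E,k)$ from the Benson--Iyengar--Krause theorem, with the key input $A\in\Thick(k)$ obtained from the $p^r$-torsion of Tate cohomology (\cref{lem:elabtatecohom}) and the finite filtration of $A/p^r$ with quotients $k$. The only deviations are in two hypothesis checks, and both of your alternatives are valid: for conservativity of $\alpha_*$ you use $k\otimes_A^{L}k\simeq k\oplus\Sigma k$ and the projection formula, whereas the paper identifies $\alpha_*$ with restriction along $A^{tE}\to k^{tE}$ using that $\StMod(E,k)$ is generated by $k$; and for surjectivity of the map on spectra you count points via \cref{lem:quotmapelabcohom}, whereas the paper uses $\im(\varphi)=\supp(\alpha_*k)$ together with $A\in\Thick(k)$.
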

\begin{proof}
Let $\alpha\colon A \to k$ be the quotient map and denote the induced tt-functor by $\alpha^*\colon \StMod(E,A) \to \StMod(E,k)$. Since $\alpha^*$ is cocontinuous, it admits a right adjoint $\alpha_*$, which in turn has a further right adjoint $\alpha_!$, see \cref{ssec:basechange}. Our goal is to apply nil-descent (\cref{thm:nildescent}) to $\alpha^*$. 

In preparation for the verification of the conditions of nil-descent, consider the localizing subcategory $\Loc(A) \subseteq \StMod(E,A)$. By derived Morita theory and \cref{prop:stmodtate}, we have an equivalence 
\[
\Mod(A^{tE}) \simeq \Loc(A),
\]
where the $E$-Tate construction is taken with respect to the trivial $E$-action on $A$. In particular, \cref{prop:elabtategeneration} transposes to the statement that $A \in \Thick(k)$ in $\StMod(E,A)$.

In particular, we see that $\alpha^*$ and $\alpha^!$ are conservative. In order to show that $\alpha_*$ is also conservative, we will use that $\StMod(H,k)$ is generated by the trivial $H$-representation $k$ for any finite $p$-group $H$. This yields a commutative diagram
\[
\xymatrix{\StMod(E,A) & \StMod(E,k) \ar[l]_-{\alpha_*}\\
\Mod(A^{tE}) \ar@{^{(}->}[u] & \Mod(k^{tE}) \ar[u]_{\simeq} \ar[l]^-{\alpha_*}.}
\]
The bottom horizontal map in this diagram is restriction along $\alpha\colon A^{tE}\to k^{tE}$, so it is conservative; hence the top horizontal functor is conservative as well. 

Next we show that the map $\varphi\colon \Spc(\stmod(E,k)) \to \Spc(\stmod(E,A))$ induced by $\alpha^*$ is bijective. To this end, recall that $\alpha^*$ is compatible with the finite localizations $\rho^*$ as in the left base-change square of \eqref{eq:rhobasechange}. The induced diagram of Balmer spectra takes the form:
\[
\xymatrix{\Spc(\stmod(E,A)) \ar@{^{(}->}[d]_{\Spc(\rho^*)} & \Spc(\stmod(E,k)) \ar@{^{(}->}[d]^{\Spc(\rho^*)} \ar[l]_-{\varphi} \\
\Spc(\rep(E,A))  & \Spc(\rep(E,k)). \ar[l]^-{\widetilde{\varphi}}}
\]
By Lau's theorem (\cref{thm:lau}) and naturality of Balmer's comparison map, the map $\widetilde{\varphi}$ identifies with the injection from \cref{lem:quotmapelabcohom}. It follows that $\varphi$ is injective. To see that it also surjective, note that $k$ is compact as an $A$-module, so $\im(\varphi) = \supp(\alpha_*k)$ by \cite[Thm.~1.7]{Balmer18}. Since $A \in \Thick(k)$, we deduce $\Spc(\stmod(E,A)) = \supp(A) \subseteq \supp(\alpha_*k)$, which implies the claim. 

In summary, $\alpha^*$ satisfies the conditions of \cref{thm:nildescent}. We can therefore descend stratification from $\StMod(E,k)$, which was established in \cite{BensonIyengarKrause11a}, to the stable module category $\StMod(E,A)$.
\end{proof}

\begin{Cor}\label{cor:elabgenericlifting}
Let $A$ be a complete DVR of mixed characteristic $(0,p)$ and $E$ an elementary abelian $p$-group. The reduction map $A \to k$ induces a bijection
\begin{equation}\label{eq:lifting}
\begin{Bmatrix}
\text{Localizing tensor ideals} \\
\text{of } \cat \StMod(E,A)
\end{Bmatrix} 
\xymatrix@C=2pc{ \ar[r]^-{\sim} &}
\begin{Bmatrix}
\text{Localizing tensor ideals} \\
\text{of } \cat \StMod(E,k)
\end{Bmatrix}.
\end{equation}
\end{Cor}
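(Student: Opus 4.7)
The plan is to assemble the corollary from the two stratification results now in hand. Both $\StMod(E,A)$ (by \cref{thm:elabstmodstratification}) and $\StMod(E,k)$ (by \cref{thm:elabbikstrat}) are stratified over their respective Balmer spectra, so on each side the universal support $\Supp$ provides a bijection between localizing tensor ideals and subsets of $\Spc(\stmod(E,A)^c)$, respectively $\Spc(\stmod(E,k)^c)$.

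Second, I would recall from the proof of \cref{thm:elabstmodstratification} that the comparison map $\varphi\colon \Spc(\stmod(E,k)) \to \Spc(\stmod(E,A))$ induced by $\alpha^*$ is a bijection. Composing the two stratification bijections with the bijection on power sets induced by $\varphi$ already yields an abstract bijection between the posets of localizing tensor ideals.

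Finally, to realize this bijection as the one naturally induced by the reduction map, I would exhibit it explicitly as $\Phi\colon \cat L \mapsto \Loco{\alpha^*\cat L}$. Since $\alpha^*$ is conservative (as verified during the proof of \cref{thm:elabstmodstratification}), the base-change formula of \cref{lem:cosuppbasechange} gives $\Supp(\alpha^*s) = \varphi^{-1}\Supp(s)$ for every $s \in \StMod(E,A)$, whence $\Supp(\Phi(\cat L)) = \varphi^{-1}\Supp(\cat L)$ for any localizing tensor ideal $\cat L \subseteq \StMod(E,A)$. Together with the support classifications on both sides, this identifies $\Phi$ with the power-set bijection induced by $\varphi^{-1}$, so $\Phi$ is itself a bijection. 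No genuine obstacle remains: all of the substantive work was carried out in establishing \cref{thm:elabstmodstratification}, and the corollary amounts to a formal bookkeeping step combining stratification with the bijectivity of $\varphi$.
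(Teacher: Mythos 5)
Your proposal is correct and follows essentially the same route as the paper: concatenate the stratification bijections for $\StMod(E,A)$ and $\StMod(E,k)$ with the bijection of spectra $\varphi$ from the proof of \cref{thm:elabstmodstratification}. Your explicit verification via $\Phi(\cat L)=\Loco{\alpha^*\cat L}$ and the base-change formula of \cref{lem:cosuppbasechange} simply spells out the step the paper dispatches with ``unwinding the definition.''
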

\begin{proof}
We concatenate the classifying bijections \eqref{eq:lococlassification} of \cref{thm:elabstmodstratification} and \cref{thm:elabbikstrat} with the bijection of spectra $\varphi\colon\Spc(\stmod(E,k)) \xrightarrow{\simeq} \Spc(\stmod(E,A))$ established in the proof of \cref{thm:elabstmodstratification}. Unwinding the definition, we see that the resulting bijection is induced by reduction to $k$.
\end{proof}

\begin{Rem}
A $k$-linear representation $V$ of a finite group $G$ is said to be \emph{liftable} if there exists $W \in \projmod(G,A)$ such that $W\otimes_Ak \cong V$. In general, it is a subtle problem to decide which representations are liftable. As one instance of a positive result in this direction, all endotrivial modules are liftable as shown in \cite{lms_lifting}, extending earlier work of Alperin \cite{alperin_lifting}. We may interpret the bijection \eqref{eq:lifting} as saying that modular lifting for elementary abelian groups holds \emph{generically}, i.e., up to the structural operations available in a tt-category. 
\end{Rem}

\section{Stratification and its consequences}

In this section, we will prove our main stratification theorems for the stable module category and $\Rep(G,\cat O)$, for any finite group $G$ and Dedekind domain $\cat O$ of characteristic $0$. Besides the results of the previous section, we employ a local-to-global principle for the stable module category, which is the subject of \cref{ssec:arithlgt}.~In \cref{ssec:stratrepiffstmod}, we combine Lau's theorem with nil-descent and Neeman's stratification theorem to show that the stable module category $\StMod(G,R)$ is stratified if and only if $\Rep(G,R)$ is stratified. Consequences for $\projmod(G,\cat O)$ are given in \cref{ssec:stratreps}.

\subsection{An arithmetic local-to-global principle}\label{ssec:arithlgt}

Inspired by the Hasse square in algebraic number theory, we establish an arithmetic local-to-global principle for representations. This technique will allow us to globalize the stratification of the stable module category for complete DVRs to characteristic $0$ Dedekind domains. 

Let $K$ be an algebraic number field, then its ring of integers $\cat O = \cat O_K$ is a Dedekind domain, i.e., an integrally closed, Noetherian domain with Krull dimension~1. In particular, $\cat O$ is regular and every non-zero prime ideal is maximal. The localization of $\cat O$ at a maximal ideal $\frak p$ is a DVR, and we write $\cdvr{\cat O}{\frak p}$ for the corresponding completion at $\frak p$. The latter is then a complete DVR of mixed characteristic $(0,p)$ with finite residue field $k$ of characteristic $p$. We note that any prime number $q$ decomposes in $\cat O$ uniquely as
\begin{equation}\label{eq:primedecomposition}
(q) = \prod_{i=1}^g\frak q_i^{e_i}
\end{equation}
for non-zero prime ideals $\frak q_i$. The natural numbers $e_i$ are known as the ramification index of $\frak q_i$ over $q$.

\begin{Conv}\label{conv:o}
Throughout this section, $\cat O$ will denote a Dedekind domain with fraction field $K$ of characteristic $0$. If $\frak p$ is a prime ideal in $\cat O$, we will write $\cdvr{\cat O}{\frak p}$ for the complete DVR obtained from $\cat O$ by completing at $\frak p$. Unless otherwise indicated, tensor products will be taken over $\cat O$.
\end{Conv}

The completion maps $\cat O \to \cdvr{\cat O}{\frak p}$ along with the inclusion $\cat O \to K$ fit into a pullback square of $\cat O[G]$-modules with trivial $G$-action
\begin{equation}\label{eq:fracturesquare}
    \vcenter{
    \xymatrix{\cat O \ar[r] \ar[d] & \prod_{\frak p}\cdvr{\cat O}{\frak p} \ar[d] \\
    K \ar[r] & K\otimes\prod_{\frak p}\cdvr{\cat O}{\frak p}}
    }
\end{equation}
in which the product ranges over the non-zero prime ideals of $\cat O$. We will refer to \eqref{eq:fracturesquare} as the arithmetic fracture square of $\cat O$. For $\cat O = \bbZ$, it recovers the usual Hasse square. 

\begin{Thm}\label{thm:arithmeticrepltg}
The arithmetic fracture square \eqref{eq:fracturesquare} induces a pullback square of symmetric monoidal rigidly-compactly generated stable $\infty$-categories
\begin{equation}\label{eq:repfracture}
    \vcenter{
        \xymatrix{\Rep(G,\cat O) \ar[r] \ar[d] & \Rep(G,\prod_{\frak p}\cdvr{\cat O}{\frak p}) \ar[d] \\
        \Rep(G,K) \ar[r] & \Rep(G,K \otimes \prod_{\frak p}\cdvr{\cat O}{\frak p}).}
    }
\end{equation}
\end{Thm}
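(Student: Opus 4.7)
The plan is to reduce this to a Milnor-type gluing theorem for derived module categories and then transport the gluing to representation categories via the diagram shape $BG$. Write $B := \prod_{\frak p} \cdvr{\cat O}{\frak p}$, $C := K$, and $D := K \otimes_{\cat O} B$. The first step is to recognise the arithmetic fracture square as a Milnor square of commutative $\cat O$-algebras: by the assumption that \eqref{eq:fracturesquare} is a pullback one has $\cat O \simeq B \times_D C$, and $D = B \otimes_{\cat O} C$ holds tautologically by construction of the bottom right corner.

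The second step is to invoke the Milnor-type gluing theorem for derived categories of modules: for any pullback square of commutative rings as above satisfying $D \simeq B \otimes_{\cat O} C$, the induced square
\[
\xymatrix{\cat D(\cat O) \ar[r] \ar[d] & \cat D(B) \ar[d] \\ \cat D(C) \ar[r] & \cat D(D)}
\]
is Cartesian in the $\infty$-category of symmetric monoidal presentable stable $\infty$-categories. This is a classical descent statement, a derived version of Beauville--Laszlo patching, that can be extracted for instance from Lurie's Spectral Algebraic Geometry. Fully faithfulness reduces to the observation that tensoring with any $N \in \cat D(\cat O)$ preserves the arithmetic fracture pullback, which is formal from exactness of $\otimes$ in the stable setting. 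Essential surjectivity is the substantive content: given compatible modules $(M_B, M_C)$ over $D$, one forms the pullback $M := M_B \times_{M_D} M_C$ in $\cat D(\cat O)$ and verifies that its base-changes recover the inputs, precisely thanks to the Milnor identity.

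The third step is to transfer the gluing from $\cat D(-)$ to $\Rep(G,-)$. For $G$ finite, $BG$ is a compact $\infty$-category, so $\Fun(BG,-)$ commutes with filtered colimits and hence with ind-completion. Combined with $\Rep(G,S) = \Ind\Fun(BG,\Perf(S))$ from \cref{def:Rep}, this gives a natural identification $\Rep(G,S) \simeq \Fun(BG, \cat D(S))$ for each ring $S$ appearing in the square. Applying the limit-preserving functor $\Fun(BG,-)$ to the Cartesian square of $\cat D$'s then yields exactly the desired pullback square \eqref{eq:repfracture}, and the symmetric monoidal and rigidly-compactly generated structures are manifestly preserved.

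The hard part will be justifying the Milnor-type gluing theorem at the derived level, specifically the essentially surjective direction: reconstructing an $\cat O$-linear complex from compatible $B$- and $C$-linear data requires handling higher coherence obstructions, and the hypothesis $D \simeq B \otimes_{\cat O} C$ is exactly what trivialises these. A secondary concern is the symmetric monoidal and rigidly-compactly generated upgrade, but once the underlying pullback of presentable stable $\infty$-categories is established, this is formal from the construction.
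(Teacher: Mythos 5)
There is a genuine gap in your third step. You identify $\Rep(G,S)$ with $\Fun(BG,\cat D(S))$ on the grounds that $\Fun(BG,-)$ commutes with ind-completion; this identification is false, and the paper is careful to distinguish the two categories. Indeed $\Fun(BG,\cat D(S))\simeq \cat D(S[G])$ has compact objects $\Perf(S[G])$, whereas $\Rep(G,S)=\Ind\Fun(BG,\Perf(S))$ has the (idempotent completion of the) strictly larger category of dualizable objects as its compacts; the discrepancy between the two is exactly the recollement of \cref{prop:mainrecollement}, whose third term $\StMod(G,S)$ is nonzero whenever $|G|$ is not invertible in $S$. So applying $\Fun(BG,-)$ to the Cartesian square of derived categories only gives you a pullback of the categories $\Fun(BG,\cat D(-))$, not of the categories $\Rep(G,-)$. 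The repair is the route the paper takes: after applying the limit-preserving functor $\Fun(BG,-)$ to the $\cat D(-)$-square, apply the further limit-preserving composite $\Ind((-)^{\dual})$ (dualizable objects commute with limits of symmetric monoidal $\infty$-categories by \cite[Prop.~4.6.1.11]{HALurie}, and $\Ind$ preserves limits); this lands in $\Rep(G,-)$ by definition and yields \eqref{eq:repfracture}.

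A secondary concern is your second step. The general principle you invoke --- that any pullback square of commutative rings with $D\simeq B\otimes_{\cat O}C$ induces a Cartesian square of derived categories --- is not a theorem: for genuine Milnor squares (one map surjective) the induced square of module categories fails to be Cartesian in general, which is precisely the failure of excision. What makes the arithmetic fracture square work is not the Milnor identity but the fact that it arises from a complementary localization/completion pair; the paper appeals to the adelic reconstruction theorem of Balchin--Greenlees for exactly this. Your intermediate claim is therefore true for this particular square, but you should cite (or prove) the fracture-square statement rather than a nonexistent general Milnor gluing theorem.
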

\begin{proof}
The adelic reconstruction theorem of Balchin and Greenlees \cite{balchingreenlees_adelic1}, applied to the derived category $\cat D(\cat O)$ of $\cat O$-modules, shows that \eqref{eq:fracturesquare} categorifies to a pullback square of symmetric monoidal $\infty$-categories\footnote{To be precise, Balchin and Greenlees work in the context of monoidal model categories. Passing to the underlying symmetric monoidal $\infty$-categories via \cite[\S4.1.7]{HALurie} results in the pullback square of \eqref{eq:derfracturesquare}.}
\begin{equation}\label{eq:derfracturesquare}
    \vcenter{
        \xymatrix{\cat D(\cat O) \ar[r] \ar[d] & \cat D(\prod_{\frak p}\cdvr{\cat O}{\frak p}) \ar[d] \\
        \cat D(K) \ar[r] & \cat D(K\otimes\prod_{\frak p}\cdvr{\cat O}{\frak p}).}
    }
\end{equation}
Let $\Cat_{\infty}$ be the $\infty$-category of $\infty$-categories. The Cartesian product equips $\Cat_{\infty}$ with the structure of a symmetric monoidal $\infty$-category (\cite[\S2.4.1]{HALurie}) and we denote the corresponding $\infty$-category of symmetric monoidal $\infty$-categories by $\CAlg(\Cat_{\infty})$. Viewing $BG$ as an $\infty$-groupoid and using the point-wise monoidal structure, the functor $\Fun(BG,-)\colon \Cat_{\infty} \to \Cat_{\infty}$ extends to a limit preserving functor
\[
\xymatrix{\Fun(BG,-)\colon \CAlg(\Cat_{\infty}) \ar[r] & \CAlg(\Cat_{\infty}).}
\]
The image of \eqref{eq:derfracturesquare} under this functor gives a pullback square of symmetric monoidal $\infty$-categories
\begin{equation}\label{eq:lsfracturesquare}
    \vcenter{
        \xymatrix{\Fun(BG,\cat D(\cat O)) \ar[r] \ar[d] & \Fun(BG,\cat D(\prod_{\frak p}\cdvr{\cat O}{\frak p})) \ar[d] \\
        \Fun(BG,\cat D(K)) \ar[r] & \Fun(BG,\cat D(K\otimes\prod_{\frak p}\cdvr{\cat O}{\frak p})).}
    }
\end{equation}
Finally, we obtain the pullback square \eqref{eq:repfracture} from \eqref{eq:lsfracturesquare} by applying the limit preserving (\cite[Prop.~4.6.1.11]{HALurie}) functor $\Ind((-)^{\dual})$ from $\CAlg(\Cat_{\infty})$ to the $\infty$-category of symmetric monoidal rigidly-compactly generated $\infty$-categories. 
\end{proof}

For $\cat O = \bbZ$, the next two results were proven in \cite[Prop.~5.2]{krause_picard} using \cref{prop:stmodtate}. His arguments extend to cover the more general case below as well; however, we give a different proof, deducing it from \cref{thm:arithmeticrepltg}.

\begin{Cor}\label{cor:arithmeticstmodltg1}
The completion maps $\cat O \to \cdvr{\cat O}{\frak p}$ induce a symmetric monoidal equivalence
\begin{equation}\label{eq:stmodfracture1}
    \vcenter{
        \xymatrix{\StMod(G,\cat O) \ar[r]^-{\sim} & \prod_{\frak p\mid |G|}\StMod(G, \cdvr{\cat O}{\frak p}).}
    }
\end{equation}
Here, the product is indexed on all prime ideals of $\cat O$ that divide the order of $G$.
\end{Cor}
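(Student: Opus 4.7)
The plan is to deduce the equivalence from \cref{thm:arithmeticrepltg}, exploiting that the integral analogue of Maschke's theorem (\cref{cor:stmodcharacteristic}) causes several corners of the resulting square to vanish. First I would observe that base-change along any homomorphism of coefficient rings is compatible with the Verdier localizations $\rho^*$, as recorded in \cref{prop:basechangecompatibilities}: it sends $R[G]$ to $S[G]$ and hence descends to stable module categories. Applying $\rho^*$ at each vertex of the pullback square \eqref{eq:repfracture} therefore yields a commutative diagram
\[
\xymatrix{\StMod(G,\cat O) \ar[r] \ar[d] & \StMod(G, \prod_{\frak p}\cdvr{\cat O}{\frak p}) \ar[d] \\ \StMod(G, K) \ar[r] & \StMod(G, K \otimes \prod_{\frak p}\cdvr{\cat O}{\frak p}),}
\]
which I would argue is again a pullback in the $\infty$-category of rigidly-compactly generated symmetric monoidal stable $\infty$-categories.

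Since $\cat O$ has characteristic zero, $|G|$ is invertible in $K$, hence also in $K \otimes \prod_\frak p \cdvr{\cat O}{\frak p}$. By \cref{cor:stmodcharacteristic}, both bottom corners vanish, so the pullback collapses to an equivalence $\StMod(G,\cat O) \xrightarrow{\sim} \StMod\bigl(G, \prod_\frak p \cdvr{\cat O}{\frak p}\bigr)$. To finish, I would split the ring as $\prod_\frak p \cdvr{\cat O}{\frak p} \cong R_{\mathrm{bad}} \times R_{\mathrm{good}}$, where $R_{\mathrm{bad}} = \prod_{\frak p \mid |G|} \cdvr{\cat O}{\frak p}$ is a finite product and $R_{\mathrm{good}}$ collects the remaining factors. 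In $R_{\mathrm{good}}$ the order $|G|$ is a unit, so a second application of \cref{cor:stmodcharacteristic} gives $\StMod(G, R_{\mathrm{good}}) = 0$, while the evident compatibility of $\Rep(G,-)$ and $\StMod(G,-)$ with finite products of coefficient rings produces $\StMod(G, R_{\mathrm{bad}}) \simeq \prod_{\frak p \mid |G|} \StMod(G, \cdvr{\cat O}{\frak p})$, yielding the claim.

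The main obstacle is the first step, namely verifying that $\rho^*$ transports the pullback square of $\Rep$'s to a pullback square of $\StMod$'s. Although $\rho^*$ is a Bousfield localization and thus preserves colimits, preservation of pullbacks is not automatic. I would handle this via the recollement of \cref{prop:mainrecollement}: the assignment $R \mapsto \cat D(R[G])$ inherits an arithmetic fracture square, obtained by tensoring the fracture square for $\cat D(\cat O)$ (from the proof of \cref{thm:arithmeticrepltg}) with the fixed algebra object $\bbZ[G]$, and this fracture is compatible with the one for $\Rep(G,-)$ under the inclusion $\cat D(R[G]) \hookrightarrow \Rep(G,R)$. Since taking Verdier quotients of a pullback square by a pullback square of kernels again yields a pullback square, we may descend the fracture for $\Rep(G,-)$ to a fracture for $\StMod(G,-)$, completing the argument.
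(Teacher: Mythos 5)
Your argument is correct, but it is not the route the paper's proof takes; it coincides instead with the alternative sketched in the Remark immediately following \cref{cor:arithmeticstmodltg1}. The paper's proof \emph{decategorifies} first: it applies the exact functor $\rho^*$ to the pullback square of \emph{objects} (the commutative ring objects $\cat O$, $\prod_{\frak p}\cdvr{\cat O}{\frak p}$, $K$, $K\otimes\prod_{\frak p}\cdvr{\cat O}{\frak p}$ inside $\Rep(G,\cat O)$), uses \cref{cor:stmodcharacteristic} to kill the bottom row and conclude $\unit\simeq\rho^*(\prod_{\frak p}\cdvr{\cat O}{\frak p})$ as commutative algebras in $\StMod(G,\cat O)$, and then decomposes via module categories, $\StMod(G,\cat O)\simeq\Mod_{\StMod(G,\cat O)}(\rho^*\prod_{\frak p\mid|G|}\cdvr{\cat O}{\frak p})\simeq\prod_{\frak p\mid|G|}\StMod(G,\cdvr{\cat O}{\frak p})$, invoking \cref{rem:etalecoeffbasechange}. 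This only requires that $\rho^*$ is exact on objects, sidestepping entirely the question you correctly identify as the main obstacle, namely whether the localized square of \emph{categories} is still a pullback. Your route confronts that question head-on, and your resolution is essentially right, but be careful with the phrasing ``taking Verdier quotients of a pullback square by a pullback square of kernels again yields a pullback square'': as a statement about arbitrary Verdier quotients this is false, since cofibers of stable $\infty$-categories do not commute with limits in general. What saves the day here is that each $\rho^*$ is a \emph{finite (smashing)} localization sitting in a stable recollement (\cref{prop:mainrecollement}), and split Verdier sequences/recollements are closed under limits — this is exactly the appeal to \cite[Thm.~I.3.3]{NikolausScholze18} made in the paper's Remark (equivalently, one can identify $\StMod(G,R)$ with modules over the compatible family of idempotent algebras $f_{R[G]}$ and commute module categories with limits). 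With that fix, your kernel identification $\cat D(R[G])\simeq\Mod_{R[G]}(\cat D(R))$, the vanishing of the bottom corners, and the final splitting of $\prod_{\frak p}\cdvr{\cat O}{\frak p}$ into the factors dividing and not dividing $|G|$ all go through as in the paper.
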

\begin{proof}
Applying the finite localization functor $\rho^*$ from \eqref{eq:stmodquot} to the decategorification of the pullback square \eqref{eq:repfracture} yields a pullback square in $\StMod(G,\cat O)$:
\begin{equation}\label{eq:stmodfracturesquare}
    \vcenter{
        \xymatrix{\unit \simeq \rho^*(\cat O) \ar[r] \ar[d] & \rho^*(\prod_{\frak p}\cdvr{\cat O}{\frak p}) \ar[d] \\
        \rho^*(K) \ar[r] & \rho^*(K\otimes\prod_{\frak p}\cdvr{\cat O}{\frak p}).}
    }
\end{equation}
As in the proof of \cref{cor:stmodcharacteristic}, both domain and codomain of the bottom horizontal functor are trivial, hence it is an equivalence. It follows that the top horizontal functor in \eqref{eq:stmodfracturesquare} is an equivalence $\unit \simeq \rho^*(\prod_{\frak p}\cdvr{\cat O}{\frak p})$ of commutative ring objects. Separating the part where the order of $G$ is invertible thus gives an equivalence 
\[
\unit \simeq \rho^*\prod_{\frak p\mid |G|}\cdvr{\cat O}{\frak p} \times \rho^*\prod_{\frak p\nmid |G|}\cdvr{\cat O}{\frak p}.
\]
The order of $G$ is invertible in $\prod_{\frak p\nmid |G|}\cdvr{\cat O}{\frak p}$, so by another application of \cref{cor:stmodcharacteristic}, the second factor vanishes. Since the set of prime ideals in $\cat O$ dividing $|G|$ is finite, we get symmetric monoidal equivalences
\begin{align*}
\StMod(G,\cat O) & \simeq \Mod_{\StMod(G,\cat O)}(\unit) \\
& \simeq \Mod_{\StMod(G,\cat O)}(\rho^*\textstyle{\prod_{\frak p\nmid |G|}\cdvr{\cat O}{\frak p}}) \\
& \simeq \textstyle{\prod_{\frak p\nmid |G|}\Mod_{\StMod(G,\cat O)}(\cdvr{\cat O}{\frak p})} \\
& \simeq \textstyle{\prod_{\frak p\nmid |G|}\StMod(G,\cdvr{\cat O}{\frak p})},
\end{align*}
where the last equivalence follows from \cref{rem:etalecoeffbasechange}.
\end{proof}

\begin{Rem}
Alternatively, the pullback square \eqref{eq:repfracture} localizes to a pullback square of symmetric monoidal rigidly-compactly generated stable $\infty$-categories
\begin{equation}\label{eq:stmodfracturesquare}
    \vcenter{
        \xymatrix{\StMod(BG,\cat O) \ar[r] \ar[d] & \StMod(BG,\prod_{\frak p}\cdvr{\cat O}{\frak p}) \ar[d] \\
        \StMod(BG,K) \ar[r] & \StMod(BG,K\otimes\prod_{\frak p}\cdvr{\cat O}{\frak p}).}
    }
\end{equation}
Indeed, Verdier sequences are compatible with finite limits, which in turn follows from \cite[Thm.~I.3.3]{NikolausScholze18}. By \cref{cor:stmodcharacteristic}, the square \eqref{eq:stmodfracturesquare} collapses to the equivalence \eqref{eq:stmodfracture1}.
\end{Rem}

On compact objects, we obtain a refinement of the previous result: 

\begin{Cor}\label{cor:arithmeticstmodltg2}
The equivalence \eqref{eq:stmodfracture1} of \cref{cor:arithmeticstmodltg1} restricts to a symmetric monoidal equivalence
\begin{equation}\label{eq:stmodfracture2}
    \xymatrix{\stmod(G,\cat O)^{\natural} \ar[r]^-{\sim} & \prod_{\frak p\mid |G|}\stmod(G, \cdvr{\cat O}{\frak p}).}
\end{equation}
\end{Cor}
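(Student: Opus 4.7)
The strategy is to restrict the symmetric monoidal equivalence of \cref{cor:arithmeticstmodltg1} to compact objects. Both sides there are rigidly-compactly generated tt-categories, so any symmetric monoidal equivalence between them preserves compactness in each direction and thus restricts to a symmetric monoidal equivalence of full subcategories of compact objects. My plan is to carry out this restriction and then match up the compact subcategories on each side with those appearing in the statement.

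On the left-hand side, \cref{rem:stmodnatural} identifies $\StMod(G,\cat O)^c$ with the idempotent completion $\stmod(G,\cat O)^{\natural}$, which is precisely the domain of \eqref{eq:stmodfracture2}. On the right-hand side, the index set $\{\frak p \mid |G|\}$ is finite, hence compactness in the product is detected component-wise; combined with another application of \cref{rem:stmodnatural}, this gives
\[
\Big(\prod_{\frak p\mid |G|}\StMod(G,\cdvr{\cat O}{\frak p})\Big)^{c} \ \simeq\ \prod_{\frak p\mid |G|}\stmod(G,\cdvr{\cat O}{\frak p})^{\natural}.
\]
Concatenating these identifications with the equivalence of \cref{cor:arithmeticstmodltg1} restricted to compact objects already yields a symmetric monoidal equivalence $\stmod(G,\cat O)^{\natural} \xrightarrow{\sim} \prod_{\frak p\mid|G|}\stmod(G,\cdvr{\cat O}{\frak p})^{\natural}$.

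The remaining point is to discard the idempotent completion on each factor on the right. Here I would invoke the algebraic model of \cref{prop:algebraicmodel}, which presents $\stmod(G,\cdvr{\cat O}{\frak p})$ as the homotopy category of $\projmod(G,\cdvr{\cat O}{\frak p})^{\fp}$ modulo weakly projective morphisms. Since $\cdvr{\cat O}{\frak p}$ is a complete DVR, the group ring $\cdvr{\cat O}{\frak p}[G]$ is a semiperfect Noetherian ring, so idempotents in $\End$-rings of finitely generated $\cdvr{\cat O}{\frak p}[G]$-lattices lift through the nilpotent part of the Jacobson radical and split; hence any idempotent in the stable category is induced by an honest direct sum decomposition of a lattice. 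This gives $\stmod(G,\cdvr{\cat O}{\frak p})^{\natural} \simeq \stmod(G,\cdvr{\cat O}{\frak p})$, completing the proof.

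The main obstacle is exactly this last step: the stable module category can in general fail to be idempotent complete (see \cref{rem:stmodnatural}), and the argument has to use the completeness of $\cdvr{\cat O}{\frak p}$ essentially. Once idempotent-completeness of each factor is in hand, everything else is the formal procedure of restricting a tt-equivalence of presentable tt-categories to its subcategory of compact objects.
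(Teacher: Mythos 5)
Your proposal matches the paper's proof: the paper likewise reduces, via \cref{cor:arithmeticstmodltg1} and \cref{rem:stmodnatural}, to showing that each $\stmod(G,\cdvr{\cat O}{\frak p})$ is idempotent complete, and for that step it cites the argument of \cite[Prop.~5.2]{krause_picard} using the $\pi$-adic completeness of $\cdvr{\cat O}{\frak p}$ — exactly the semiperfect/Krull--Schmidt point you sketch (though, to be precise, the relevant fact is that quotients of Krull--Schmidt categories by an ideal, here the maps factoring through projectives rather than the Jacobson radical, remain idempotent complete). So the approach and the key remaining verification are the same as in the paper.
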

\begin{proof}
In light of \cref{cor:arithmeticstmodltg1} and \cref{rem:stmodnatural}, it remains to see that the tt-category $\stmod(G, \cdvr{\cat O}{\frak p})$ is idempotent complete. This can be shown as in the proof of Proposition 5.2 in \cite{krause_picard}, using that $\cdvr{\cat O}{\frak p}$ is $\pi$-adically complete.
\end{proof}

\begin{Rem}
The equivalence of \eqref{eq:stmodfracture2} is studied in more detail in work in progress of Grodal and Krause. In particular, as noted previously, $\stmod(G,\cat O)$ is in general not idempotent complete. They establish character-theoretic criteria for an object in $\stmod(G,\cat O)^{\natural}$ to already lie in $\stmod(G,\cat O)$.
\end{Rem}

\subsection{Stratification for the derived category of representations}\label{ssec:stratrepiffstmod}

Let $R$ be a Noetherian ring and $\cat D(R)$ its derived category of modules. In \cite{Neeman92a}, Neeman proved that $\cat D(R)$ is stratified over $\Spc(\Perf(R)) \cong \Spec(R)$. The goal of this subsection is to use descent to leverage Neeman's theorem to an equivalence between stratification for $\Rep(G,R)$ and stratification for $\StMod(G,R)$.

\begin{Lem}\label{lem:deraugconservative}
Let $R$ be a commutative ring, let $G$ be a finite group, and consider restriction and corestriction $i^*, i^!\colon \cat D(R[G]) \to \cat D(R)$. There is a natural isomorphism $i^* \simeq i^!$ and both functors are conservative.
\end{Lem}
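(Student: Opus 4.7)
My plan is to treat the two claims separately: first the natural isomorphism $i^*\simeq i^!$, which reduces to the classical ambidexterity of induction and coinduction along a finite group inclusion; then conservativity, which is essentially the fact that the underlying complex functor is faithful on cohomology.

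For the isomorphism $i^*\simeq i^!$, I would start by identifying the adjoints of $i^*$ explicitly. Restriction $i^*\colon\cat D(R[G])\to\cat D(R)$ sits in the middle of an adjoint triple $(i_!,i^*,i_*)$, where $i_!=R[G]\otimes_R(-)$ is induction and $i_*=\Hom_R(R[G],-)$ is coinduction. Both are exact (because $R[G]$ is finite free over $R$, hence flat and projective on both sides), so they descend directly to the derived categories, and the adjunctions pass with them. The functor $i^!$ is by definition the further right adjoint of $i_*$ in the adjoint quadruple discussed in \cref{ssec:basechange}. The ambidexterity isomorphism $i_!\simeq i_*$, coming from the Frobenius algebra structure on $R[G]$ (equivalently, from the $(R[G],R[G])$-bimodule isomorphism $R[G]\cong\Hom_R(R[G],R)$ via the linear form $g\mapsto \delta_{g,e}$), then gives $i_!\simeq i_*$ as functors $\cat D(R)\to\cat D(R[G])$. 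By uniqueness of adjoints, the right adjoint of $i_*$ is naturally isomorphic to the right adjoint of $i_!$, that is, $i^!\simeq i^*$.

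For conservativity, I would argue directly on $i^*$, then transport along the isomorphism just established. The functor $i^*$ at the level of chain complexes merely forgets the $G$-action and commutes with cohomology: for $M\in\cat D(R[G])$ one has $H^n(i^*M)\cong i^*H^n(M)$, and an $R[G]$-module is zero if and only if its underlying $R$-module is zero. Hence $i^*M\simeq 0$ in $\cat D(R)$ forces all $H^n(M)=0$ in $\Mod(R[G])$, so $M\simeq 0$. Conservativity of $i^!$ is then immediate from $i^!\simeq i^*$.

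There is no serious obstacle: both parts are classical, and the only point where one must be a little careful is checking that the abelian-categorical isomorphism $i_!\simeq i_*$ induces the corresponding statement at the derived/$\infty$-categorical level. This is harmless because induction and coinduction are exact functors of abelian categories, so they lift to the derived categories without deriving and preserve both adjunctions on the nose; the existence and uniqueness of the iterated right adjoint $i^!$ is guaranteed by the adjoint functor theorem in the compactly generated stable setting, so the abstract uniqueness-of-adjoints argument applies.
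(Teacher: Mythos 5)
Your proposal is correct and follows essentially the same route as the paper: the isomorphism $i^*\simeq i^!$ is obtained by passing to right adjoints in the ambidexterity equivalence $\ind\simeq\coind$ for the finite-index inclusion $e\subseteq G$, and conservativity is checked on homology after forgetting the $G$-action. You simply spell out the Frobenius-algebra origin of the ambidexterity and the uniqueness-of-adjoints step in more detail than the paper does.
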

\begin{proof}
Let $R[G] \to R$ be the canonical augmentation, induced by the inclusion $e \to G$ of the trivial group. Since $G$ is finite, induction from $R$ to $R[G]$ is naturally isomorphic to coinduction, so passing to right adjoints yields a natural isomorphism $i^* \simeq i^!$. An object in $D(R[G])$ is equivalent to $0$ if and only if all its homology groups are zero, and this is detected after forgetting the $G$-action. The claim follows.
\end{proof}

\begin{Thm}\label{thm:stratrepiffstmod}
Let $G$ be a finite group and $R$ a Noetherian commutative ring, then the following statements are equivalent:
    \begin{enumerate}
        \item[(1)] $\StMod(G,R)$ is stratified over $\Spec^h(H^*(G;R))\setminus\Spec(R)$.
        \item[(2)] $\Rep(G,R)$ is stratified over $\Spec^h(H^*(G;R))$.
    \end{enumerate}
\end{Thm}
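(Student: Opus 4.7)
I plan to verify minimality of the local category $\Gamma_x \Rep(G,R)$ for each $x \in \Spec^h(H^*(G;R))$; the local-to-global principle is automatic since the Balmer spectrum is Noetherian by Lau's theorem (\cref{thm:lau}). For $(2) \Rightarrow (1)$, observe that $\rho^*\colon \Rep(G,R) \to \StMod(G,R)$ is a smashing localization whose kernel is supported on the Thomason subset $\Spec(R) \subset \Spec^h$ (\cref{cor:spcstmod}); using compatibility of $\rho^*$ with Balmer--Favi idempotents, for each $x \in V := \Spec^h \setminus \Spec(R)$ the functor $\rho^*$ restricts to an equivalence $\Gamma_x \Rep \simeq \Gamma_x \StMod$, so minimality transfers to give stratification of $\StMod$.

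For $(1) \Rightarrow (2)$, the case $x \in V$ is handled by the same equivalence, now using (1). The substantive case is $x \in \Spec(R)$, which I would handle by descent from Neeman's stratification of $\cat D(R)$. Let $\mathfrak{p} = \alpha^{-1}(x)$ under the augmentation embedding $\alpha\colon \Spec(R) \hookrightarrow \Spec^h$. Since $\rho^*(\Gamma_x \unit_{\Rep}) = 0$, the local category $\Gamma_x \Rep$ is contained in $\cat D(R[G])$, the kernel of $\rho^*$. The forgetful tt-functor $\res\colon \Rep(G,R) \to \cat D(R)$ fails to be conservative on all of $\Rep$ (its kernel contains the Tate-type objects generating $\StMod$), but is conservative when restricted to the subcategory $\cat D(R[G])$. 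By Lau's theorem and naturality of Balmer's comparison map (\cite[Thm.~5.3]{Balmer10b}), the map induced by $\res$ on Balmer spectra is precisely $\alpha$; hence, by compatibility of tt-functors with Balmer--Favi idempotents (\cite[Thm.~6.3]{BalmerFavi11}), we obtain $\res(\Gamma_x \unit_{\Rep}) \simeq \Gamma_{\mathfrak{p}} \unit_{\cat D(R)}$, so $\res$ maps $\Gamma_x \Rep$ into $\Gamma_{\mathfrak{p}} \cat D(R)$.

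Given nonzero $X, Y \in \Gamma_x \Rep$, the restrictions $\res(X), \res(Y) \in \Gamma_{\mathfrak{p}} \cat D(R)$ are nonzero by conservativity on $\cat D(R[G])$. Neeman's minimality together with \cref{lem:minimality_crit} yields some $M \in \cat D(R)$ with $\Hom_{\cat D(R)}(\res(X) \otimes M, \res(Y)) \neq 0$. Applying the adjunction $\ind \dashv \res$ together with the projection formula $\ind(\res(X) \otimes M) \simeq X \otimes \ind(M)$ provides a natural isomorphism
\[
\Hom_{\Rep}(X \otimes \ind(M), Y) \simeq \Hom_{\cat D(R)}(\res(X) \otimes M, \res(Y)),
\]
so taking $Z := \ind(M) \in \Rep$ verifies the minimality criterion at $x$ via \cref{lem:minimality_crit}. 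The main difficulty lies in this case $x \in \Spec(R)$: the non-conservativity of $\res$ on the whole of $\Rep$ must be circumvented, which works because the relevant local categories land automatically in $\cat D(R[G])$, where $\res$ does become conservative.
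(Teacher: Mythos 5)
Your proposal is correct, and for the substantive direction $(1)\Rightarrow(2)$ it takes a genuinely different route from the paper. The paper packages both implications through descent theorems: $(2)\Rightarrow(1)$ via Zariski descent (\cref{thm:zariskidescent}), exactly as you do in effect, and $(1)\Rightarrow(2)$ by applying nil-descent (\cref{thm:nildescent}) to the combined functor $\varphi^*=(\rho^*,i^*)\colon \Rep(G,R)\to\StMod(G,R)\times\cat D(R)$; this requires checking that $\varphi^*$, $\varphi_*$ \emph{and} $\varphi^!$ are all conservative (the last via an abstract local duality argument) and runs through the cosupport formula $\Cosupp\iHom(t_1,t_2)=\Supp(t_1)\cap\Cosupp(t_2)$. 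You instead verify minimality point by point, splitting $\Spec^h(H^*(G;R))$ into $\Spec(R)$ and its complement: over the complement you use the localization equivalence on local categories, and over $\Spec(R)$ you observe that $\Gamma_x\Rep(G,R)$ lands inside $\ker(\rho^*)=\Loco{R[G]}\simeq\cat D(R[G])$ (\cref{prop:mainrecollement}), where the forgetful functor is conservative by \cref{lem:deraugconservative}, and then transfer Neeman's minimality via \cref{lem:minimality_crit}, the adjunction $\ind\dashv\res$, and the projection formula (using $\ind\simeq\coind$ so that \eqref{eq:projformula} applies). Both arguments rest on the same inputs (Lau's theorem, \cref{cor:spcstmod}, the recollement, Neeman's stratification of $\cat D(R)$), but yours avoids cosupport and the conservativity of $i^!$ entirely, at the cost of being bespoke rather than an instance of a reusable descent principle; the paper's formulation isolates \cref{thm:nildescent} precisely because it is reused elsewhere (e.g.\ in \cref{thm:elabstmodstratification}). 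All the individual steps you need — $\rho^*(\Gamma_x\unit)=0$ for $x\in\Spec(R)$, the identification of $\Spc(i^*)$ with the augmentation embedding via naturality of the comparison map, and $i^*(\Gamma_x\unit)\simeq\Gamma_{\frak p}\unit$ from \cite[Thm.~6.3]{BalmerFavi11} — check out.
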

\begin{proof}
The identification of the corresponding Balmer spectra is provided by \cref{thm:lau} and \cref{cor:spcstmod}, respectively. It is therefore enough to verify that $\StMod(G,R)$ is stratified if and only if $\Rep(G,R)$ is stratified.

By construction, the functor $\rho^*\colon \Rep(G,R) \to \StMod(G,R)$ appearing in \cref{prop:mainrecollement} is a finite localization. The implication $(2) \implies (1)$ thus follows from Zariski descent (\cref{thm:zariskidescent}). 

Conversely, suppose that $\StMod(G,R)$ is stratified. Write $i^*\colon \Rep(G,R) \to \cat D(R)$ for the symmetric monoidal forgetful functor from \cref{ssec:changeofgroups}. The corresponding base change functors $(i^*,i_*,i^!)$ together with the functors $(\rho^*,\rho_*,\rho^!)$ assemble into a triple of adjoints
\[
\xymatrix{\varphi^*=(\rho^*,i^*), \varphi^!=(\rho^!,i^!)\colon\Rep(G,R) \ar@<1ex>[r] \ar@<-1ex>[r] & \StMod(G,R) \times \cat D(R) \ar[l] \noloc \varphi_* = \rho_*\times i_*.}
\]
In other words, $\varphi^*$ has a right adjoint $\varphi_*$ which admits a further right adjoint $\varphi^! = (\rho^!,i^!)$, constructed as above. The idea is to apply nil-descent  to $\varphi^*$. 

To this end, we have to verify Conditions (a) and (b) of \cref{thm:nildescent}. We first claim that $\varphi^*$ is conservative. Indeed, suppose $\varphi^*(X) \simeq 0$ for some $X \in \Rep(G,R)$. By considering the first factor, we see that $X$ is in the kernel of $\rho^*$, which identifies with $\cat D(R[G])$ by \cref{prop:mainrecollement}. In light of \cref{lem:deraugconservative}, this forces $X \simeq 0$, as claimed. Similarly, consider $X$ with $\varphi^!(X)=0$. In particular, $X$ is in the kernel of $\rho^!$. By abstract local duality in the sense of \cite[Thm.~2.21]{bhv1}, this implies that $X \simeq 0$ if and only if $X \otimes R[G] \simeq 0$. But $X \otimes R[G]$ is contained in $\Loco{R[G]} \simeq \cat D(R[G])$, so we compute
\[
i^*(X \otimes R[G]) \simeq i^*(X) \otimes i^*(R[G]) \simeq i^!(X) \otimes i^*(R[G]) \simeq 0
\]
as a consequence of \cref{lem:deraugconservative}. Therefore, $X \otimes R[G] \simeq 0$ as desired.

Next, we have to verify that $\varphi_*$ is conservative, which is equivalent to saying that both $\rho_*$ and $i_*$ are conservative. In fact, $\rho_*$ is fully faithful, while the claim for $i_*$ follows from \cref{prop:restrictionetale}. We have thus verified Condition (a).

As for Condition (b), the restriction of $\varphi^*$ to compact objects induces a map on Balmer spectra
\[
\xymatrix{\Spc(\stmod(G,R)) \coprod \Spc(\Perf(R)) \ar[r] & \Spc(\rep(G,R))}
\]
which is a spectral bijection in light of \cref{cor:spcstmod}. We are therefore in the position to apply \cref{thm:nildescent}: By assumption, $\StMod(G,R)$ is stratified. Since $\cat D(R)$ is stratified as well by Neeman's theorem \cite[Thm.~2.8]{Neeman92a}, stratification descends to $\Rep(G,R)$, as desired.
\end{proof}

\subsection{Stratification for the stable module category}\label{ssec:stratstmod}

We are now ready to put the pieces together to establish stratification for the integral stable module category of a finite group. Throughout this section, \cref{conv:o} will be in place. As a proof of concept, we first give an independent proof of the main theorem of \cite{BensonIyengarKrause11a}:

\begin{Thm}[Benson--Iyengar--Krause]
Let $G$ be a finite group and $k$ a field of characteristic $p>0$. The category $\StMod(G,k)$ is stratified over $\Proj(H^*(G;k))$.
\end{Thm}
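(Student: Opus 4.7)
The plan is to deduce this from the already-established machinery by descending along two layers. First, I would invoke the \'etale descent reduction to elementary abelian subgroups, namely \cref{thm:descenttoelab}(b): the claim that $\StMod(G,k)$ is stratified reduces to showing that $\StMod(E,k)$ is stratified for every $E \in \cat E(G)$. This uses that the total restriction functor is finite \'etale and conservative (a consequence of Carlson's theorem, \cref{thm:derivedchouinard}), so that \cref{thm:etaledescent} applies.

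Next, I would split the elementary abelian subgroups by their prime. If $E$ is elementary abelian of order prime to $p$, then $|E|$ is invertible in $k$ and \cref{cor:stmodcharacteristic} shows that $\StMod(E,k)=0$, which is trivially stratified. So the only nontrivial case is $E \in \cat E_p(G)$, and for such $E$, stratification of $\StMod(E,k)$ over $\Proj(H^*(E;k))$ is precisely \cref{thm:elabbikstrat} (the elementary abelian case proven via Mathew's Galois-theoretic argument reducing to $\Mod(k^{t(S^1)^{\times r}})$, whose homotopy ring is a graded regular Noetherian ring concentrated in even degrees).

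Finally, the identification of the parametrising space is provided by \cref{cor:spcstmod} together with \cref{ex:bcr}: for $k$ a field of characteristic $p$, we have a homeomorphism
\[
\Spc(\stmod(G,k)) \;\cong\; \Spec^h(H^*(G;k))\setminus \Spec(k) \;\cong\; \Proj(H^*(G;k)),
\]
since the image of $\Spec(k)\hookrightarrow\Spec^h(H^*(G;k))$ is the irrelevant ideal $H^{>0}(G;k)$. Combining the three inputs yields the desired stratification.

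The main obstacle, strictly speaking, has already been isolated and absorbed into the two quoted results: it is the minimality of $\Gamma_{\frak p}\StMod(E,k)$ for $E$ elementary abelian, which is the genuinely hard content of \cite{BensonIyengarKrause11a}. In the present proof architecture, all the remaining work is formal: verification of the hypotheses of \'etale descent (conservativity of $\res$ via derived Chouinard), vanishing in coprime characteristic (\cref{cor:stmodcharacteristic}), and bookkeeping of spectra (\cref{thm:lau} and \cref{cor:spcstmod}). Thus, granting \cref{thm:elabbikstrat}, the argument is a clean descent along the restriction tt-functor with no further input from Quillen stratification required.
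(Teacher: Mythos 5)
Your proposal is correct and follows essentially the same route as the paper: \'etale descent to elementary abelian subgroups via \cref{thm:descenttoelab}, the elementary abelian case from \cref{thm:elabbikstrat}, and the identification of the spectrum via \cref{cor:spcstmod} and \cref{ex:bcr}. Your explicit disposal of the elementary abelian $\ell$-subgroups with $\ell\neq p$ via \cref{cor:stmodcharacteristic} is a small point the paper leaves implicit, but it is the right way to see that \cref{thm:elabbikstrat} (stated only for $p$-groups) suffices.
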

\begin{proof}
Via \'etale descent in the form of \cref{thm:descenttoelab}, stratification for $\StMod(G,k)$ follows from \cref{thm:elabbikstrat}. The identification of the spectrum is given in \cref{ex:bcr} or may alternatively be deduced from \cref{thm:genquillenstrat} and \cref{cor:quillenstratification}.
\end{proof}

\begin{Thm}\label{thm:stratstmod}
Let $G$ be a finite group and let $\cat O$ be as in \cref{conv:o}. The category $\StMod(G,\cat O)$ is stratified over $\left(\Spec^h(H^*(G;\cat O))\setminus\Spec(\cat O)\right)$.
\end{Thm}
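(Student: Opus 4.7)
The plan is to assemble stratification of $\StMod(G, \cat O)$ from cases already established earlier in the paper via the descent machinery of \cref{sec:ttgeometry}, following the bottom row of the Leitfaden. The identification of the Balmer spectrum with $\Spec^h(H^*(G;\cat O)) \setminus \Spec(\cat O)$ is immediate from \cref{cor:spcstmod}, so the substantive task is to verify the stratification condition itself.

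First, I would apply finite \'etale descent for stratification in the form of \cref{thm:descenttoelab}(b) to reduce from the arbitrary finite group $G$ to its elementary abelian subgroups $E \subseteq G$; this step is legitimate thanks to the derived Chouinard theorem (\cref{thm:derivedchouinard}), which has already been packaged into that corollary. It then remains to prove that $\StMod(E, \cat O)$ is stratified for every elementary abelian $p$-group $E$ (with $p$ ranging through the primes dividing $|G|$).

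Second, for such an $E$, the arithmetic local-to-global equivalence \cref{cor:arithmeticstmodltg1} gives a symmetric monoidal decomposition
\[
\StMod(E, \cat O) \simeq \prod_{\frak p \mid |E|} \StMod(E, \cdvr{\cat O}{\frak p}),
\]
indexed on the finitely many primes $\frak p$ of $\cat O$ over $p$. By \cref{cor:stratfiniteproduct}, stratification of the left-hand side is equivalent to stratification of each factor on the right. Since $\cat O$ has characteristic $0$, every completion $\cdvr{\cat O}{\frak p}$ is a complete DVR of mixed characteristic $(0,p)$, so \cref{thm:elabstmodstratification} applies to each factor and yields its stratification. Combining the three steps delivers the theorem.

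I do not expect a genuine obstacle: all the real work — nil-descent and modular lifting from $k$ to $A$ (\cref{thm:elabstmodstratification}), \'etale descent (\cref{thm:etaledescent} and \cref{thm:descenttoelab}), and the arithmetic fracture for $\StMod$ (\cref{cor:arithmeticstmodltg1}) — has already been done. The only mild bookkeeping is to confirm that the hypotheses of \cref{thm:elabstmodstratification} (a complete DVR of mixed characteristic $(0,p)$) apply to each $\cdvr{\cat O}{\frak p}$, which is automatic under \cref{conv:o}.
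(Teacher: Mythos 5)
Your proposal is correct and follows essentially the same route as the paper: reduce to elementary abelian subgroups via \cref{thm:descenttoelab}, split $\StMod(E,\cat O)$ into completions via \cref{cor:arithmeticstmodltg1} and \cref{cor:stratfiniteproduct}, invoke \cref{thm:elabstmodstratification} for each complete DVR, and identify the spectrum via \cref{cor:spcstmod}. The only difference is the order of presentation (the paper handles the elementary abelian case first and then descends), which is immaterial.
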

\begin{proof}
Consider an elementary abelian subgroup $E$ of $G$. If $\frak p$ is a non-zero prime ideal in $\cat O$, then $\cdvr{\cat O}{\frak p}$ is a complete DVR of mixed characteristic $(0,p)$ for some prime~$p$. By \cref{thm:elabstmodstratification}, the category $\StMod(E,\cdvr{\cat O}{\frak p})$ is stratified. Note that the set of prime ideals in $\cat O$ which divide the order of $G$ is finite. It therefore follows from \cref{cor:arithmeticstmodltg1} and \cref{cor:stratfiniteproduct} that $\StMod(E,\cat O)$ is stratified for any $E \in \cat E(G)$.

We can thus apply \cref{thm:descenttoelab} to conclude that $\StMod(G,\cat O)$ is stratified as well. The Balmer spectrum of $\stmod(G,\cat O)$ was determined in \cref{cor:spcstmod}.
\end{proof}

\begin{Cor}\label{cor:stratstmod}
The universal support function composed with Balmer's comparison map induces a bijection
\[
\Supp\colon\begin{Bmatrix}
\text{Localizing tensor ideals} \\
\text{of } \cat \StMod(G,\cat O)
\end{Bmatrix} 
\xymatrix@C=2pc{ \ar[r]^-{\sim} &}
\begin{Bmatrix}
\text{Subsets of}  \\
\Spec^h(H^*(G;\cat O))\setminus\Spec(\cat O)
\end{Bmatrix}.
\]
Moreover, the telecope conjecture holds in $\StMod(G,\cat O)$, so that we obtain bijections
\[
\begin{Bmatrix}
\text{Smashing ideals} \\
\text{of } \cat \StMod(G,\cat O)
\end{Bmatrix} 
\simeq
\begin{Bmatrix}
\text{Thick tensor ideals} \\
\text{of } \cat \stmod(G,\cat O)
\end{Bmatrix} 
\simeq
\begin{Bmatrix}
\text{Specialization closed subsets}  \\
\text{of }\Spec^h(H^*(G;\cat O))\setminus\Spec(\cat O)
\end{Bmatrix}.
\]
Finally, cohomological support satisfies the tensor product formula:
\[
\Supp(M \otimes N) = \Supp(M) \cap \Supp(N)
\]
for any $M,N \in \StMod(G,\cat O)$.
\end{Cor}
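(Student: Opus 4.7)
The plan is to deduce all three assertions of the corollary as formal consequences of \cref{thm:stratstmod} combined with the general results of \cref{ssec:stratificationbasics}, so essentially no new work is required beyond citing the correct machinery. The key observation is that the Balmer spectrum $\Spc(\stmod(G,\cat O)) \cong \Spec^h(H^*(G;\cat O))\setminus\Spec(\cat O)$, as identified in \cref{cor:spcstmod}, is Noetherian: indeed, $H^*(G;\cat O)$ is a Noetherian graded ring by the theorems of Evens, Golod, and Venkov, so its homogeneous spectrum and any subspace thereof is Noetherian.

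First I would address the classification of localizing tensor ideals. By definition of stratification (the map \eqref{eq:lococlassification} being a bijection), \cref{thm:stratstmod} directly gives the stated bijection between localizing tensor ideals of $\StMod(G,\cat O)$ and subsets of the Balmer spectrum, and the latter is identified via \cref{cor:spcstmod}.

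Next I would handle the telescope conjecture. Since $\StMod(G,\cat O)$ is stratified with Noetherian spectrum, \cref{prop:gentelescope} (Stevenson) applies verbatim and yields the two bijections between smashing ideals of $\StMod(G,\cat O)$, thick tensor ideals of $\stmod(G,\cat O)$, and specialization closed subsets of the Balmer spectrum; again the identification of the spectrum comes from \cref{cor:spcstmod}.

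Finally, the tensor product formula is an immediate application of \cref{prop:tensorproductformula} to the stratified category $\StMod(G,\cat O)$. There are no obstacles in this argument; the entire point of separating the classification problem into (i) computing the Balmer spectrum and (ii) establishing stratification, as emphasised in the introduction, is precisely that once both have been done all of these consequences become formal. The only thing worth stating explicitly in the writeup is the Noetherianity of the spectrum, which legitimises applying \cref{prop:gentelescope} and \cref{prop:tensorproductformula}.
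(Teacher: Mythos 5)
Your proposal is correct and follows exactly the paper's own proof: the classification of localizing tensor ideals comes from \cref{thm:stratstmod} together with \cref{cor:spcstmod}, the telescope conjecture from \cref{prop:gentelescope}, and the tensor product formula from \cref{prop:tensorproductformula}. Your explicit remark that the spectrum is Noetherian (which the paper records just after \cref{cor:spcstmod}) is a welcome bit of care, since it is the hypothesis legitimising the last two citations.
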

\begin{proof}
The first statement is a consequence of \cref{thm:stratstmod} and \cref{cor:spcstmod}, while the second one follows from  \cref{prop:gentelescope}. The tensor product formula for support then holds by \cref{prop:tensorproductformula}. 
\end{proof}

In light of \cref{thm:stratrepiffstmod}, stratification for the stable module category is equivalent to stratification for the derived category of representations. 

\begin{Thm}\label{thm:stratrep}
Let $G$ be a finite group and let $\cat O$ be as in \cref{conv:o}. The category $\Rep(G,\cat O)$ is stratified over $\Spec^h(H^*(G;\cat O))$: The universal support function composed with Balmer's comparison map induces a bijection
\[
\Supp\colon\begin{Bmatrix}
\text{Localizing tensor ideals} \\
\text{of } \cat \Rep(G,\cat O)
\end{Bmatrix} 
\xymatrix@C=2pc{ \ar[r]^-{\sim} &}
\begin{Bmatrix}
\text{Subsets of}  \\
\Spec^h(H^*(G;\cat O))
\end{Bmatrix}.
\]
Moreover, the telecope conjecture holds in $\Rep(G,\cat O)$, so that we obtain bijections
\[
\begin{Bmatrix}
\text{Smashing ideals} \\
\text{of } \cat \Rep(G,\cat O)
\end{Bmatrix} 
\simeq
\begin{Bmatrix}
\text{Thick tensor ideals} \\
\text{of } \cat \rep(G,\cat O)
\end{Bmatrix} 
\simeq
\begin{Bmatrix}
\text{Specialization closed}  \\
\text{subsets of } \Spec^h(H^*(G;\cat O))
\end{Bmatrix},
\]
and support satisfies the tensor product formula for any $M,N \in \Rep(G,\cat O)$:
\[
\Supp(M \otimes N) = \Supp(M) \cap \Supp(N).
\]
\end{Thm}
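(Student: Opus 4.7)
The plan is to deduce \cref{thm:stratrep} from the immediately preceding results with essentially no new work. By \cref{thm:stratstmod}, the stable module category $\StMod(G,\cat O)$ is stratified over $\Spec^h(H^*(G;\cat O))\setminus\Spec(\cat O)$. The equivalence between stratification of $\StMod(G,R)$ and stratification of $\Rep(G,R)$ was established in \cref{thm:stratrepiffstmod} for any Noetherian commutative ring $R$; specializing to $R=\cat O$ as in \cref{conv:o} immediately yields stratification of $\Rep(G,\cat O)$.

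To identify the Balmer spectrum, I invoke Lau's theorem \cref{thm:lau}: for $\cat O$ regular Noetherian, Balmer's comparison map furnishes a homeomorphism $\Spc(\rep(G,\cat O)) \cong \Spec^h(H^*(G;\cat O))$ under which Balmer support corresponds to cohomological support. Combined with stratification, this yields the asserted bijection between localizing tensor ideals of $\Rep(G,\cat O)$ and subsets of $\Spec^h(H^*(G;\cat O))$.

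The remaining two statements are formal consequences of stratification of a rigidly-compactly generated tt-category with Noetherian Balmer spectrum. The Noetherianity of $\Spec^h(H^*(G;\cat O))$ follows since $H^*(G;\cat O)$ is Noetherian by the Evens--Golod--Venkov theorem. Then \cref{prop:gentelescope} applied to $\cat T = \Rep(G,\cat O)$ gives the generalized telescope conjecture, producing the chain of bijections with smashing ideals, thick tensor ideals of the compact part, and specialization closed subsets of the spectrum. Finally, \cref{prop:tensorproductformula} applied to $\Rep(G,\cat O)$ gives the tensor product formula $\Supp(M\otimes N) = \Supp(M)\cap\Supp(N)$.

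Since every individual step is a direct application of an already-proved result, there is no genuine obstacle; the only thing to be mildly careful about is that both $\rep(G,\cat O)$ and $\Rep(G,\cat O)^c$ (its idempotent completion) give the same Balmer spectrum, so that the identification via Lau's theorem is unambiguous. This is guaranteed by \cite[Prop.~3.13]{Balmer05a}, recalled in the introduction of \cref{ssec:stratificationbasics}.
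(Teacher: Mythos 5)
Your proposal is correct and follows exactly the paper's own argument: stratification of $\Rep(G,\cat O)$ is deduced from \cref{thm:stratstmod} via the equivalence of \cref{thm:stratrepiffstmod}, the spectrum is identified by Lau's theorem, and the telescope conjecture and tensor product formula follow from \cref{prop:gentelescope} and \cref{prop:tensorproductformula}. The additional remarks on Noetherianity and idempotent completion are accurate but not needed beyond what the cited results already supply.
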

\begin{proof}
By \cref{thm:stratrepiffstmod} and \cref{thm:stratstmod}, the tt-category $\Rep(G,\cat O)$ is stratified. Lau's theorem (\cref{thm:lau}) identifies the Balmer spectrum and the telescope conjecture follows from  \cref{prop:gentelescope}. Finally, the tensor product formula is a consequence of stratification and \cref{prop:tensorproductformula}.
\end{proof}

\subsection{Stratification for the category of representations}\label{ssec:stratreps}

In this final subsection, we deduce a generic classification theorem for $\cat O$-linear $G$-representations. Recall that $\projmod(G,\cat O)$ denotes the Frobenius category of $\cat O[G]$-modules whose underlying $\cat O$-modules are projective. The notion of Serre and localizing subcategories generalizes from the setting of abelian categories to exact categories.

\begin{Def}\label{def:exlocalizing}
Let $\cat E$ be an exact category. A \emph{thick} subcategory $\cat S \subseteq \cat E$ is a full subcategory which satisfies the following closure properties:
    \begin{itemize}
        \item $\cat S$ is closed under direct summands. 
        \item If $0\to X \to Y \to Z \to 0$ is an exact sequence in $\cat E$ such that two terms of it are in $\cat S$, then so is the third.  
    \end{itemize}
For an exact category $\cat E$ which admits set-indexed coproducts, a thick subcategory of $\cat E$ is called a \emph{localizing} subcategory if it is also closed under set-indexed coproducts. If $\cat E$ is additionally equipped with an exact symmetric monoidal structure $\otimes$, we say that $\cat S$ is a thick tensor ideal or localizing tensor ideal if it is a thick subcategory or localizing subcategory which is additionally closed under $\otimes$, respectively. 
\end{Def}

\begin{Lem}\label{lem:alglocalizingideals}
The functor $\projmod(G,R) \to \StMod(G,R)$ of \eqref{eq:algmodelfunctor} gives a bijection:
\[
\begin{Bmatrix}
\text{Non-zero localizing tensor} \\
\text{ideals of } \projmod(G,R)
\end{Bmatrix} 
\xymatrix@C=2pc{ \ar[r]^-{\sim} &}
\begin{Bmatrix}
\text{Localizing tensor ideals}  \\
\text{of } \StMod(G,R)
\end{Bmatrix}.
\]
\end{Lem}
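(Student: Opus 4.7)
I would prove the bijection by producing explicit inverse maps, with all the content concentrated in one closure property. Let $\pi \colon \projmod(G,R)\to \StMod(G,R)$ denote the functor of \eqref{eq:algmodelfunctor}. For a localizing tensor ideal $\cat L\subseteq \StMod(G,R)$ set $\Phi(\cat L) := \pi^{-1}(\cat L)$; this is visibly a localizing tensor ideal of $\projmod(G,R)$, and since the kernel of $\pi$ consists of the projective $R[G]$-modules, $\Phi(\cat L)$ is always non-zero. Conversely, for a non-zero localizing tensor ideal $\cat I\subseteq \projmod(G,R)$ set $\Psi(\cat I)$ to be the essential image of $\cat I$ in $\StMod(G,R)$; using that $\pi$ is essentially surjective, symmetric monoidal, and cocontinuous, and that conflations in the Frobenius structure of $\projmod(G,R)$ induce triangles of $\StMod(G,R)$ (cf.~\cref{lem:exactstructure} and \eqref{eq:frobenius}), this $\Psi(\cat I)$ is indeed a localizing tensor ideal.

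The technical heart of the argument is the following closure property: any non-zero localizing tensor ideal $\cat I$ of $\projmod(G,R)$ contains every projective $R[G]$-module. To prove this, take $0\neq M\in\cat I$; the standard untwisting isomorphism gives $M\otimes_R R[G]\cong |M|\otimes_R R[G]$ as $R[G]$-modules, where $|M|$ denotes the underlying $R$-module equipped with trivial $G$-action. Since $R=\cat O$ is Dedekind, the structure theorem for projective modules decomposes $|M| = \bigoplus_\alpha J_\alpha$ with each $J_\alpha$ a finitely generated projective $R$-module; this yields a decomposition of $|M|\otimes_R R[G]$ in $R[G]$-modules, so by summand closure $\cat I$ contains some $J\otimes_R R[G]$ with $J$ a non-zero finitely generated projective over $R$. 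Such $J\otimes_R R[G]$ is dualizable in the symmetric monoidal category $\projmod(G,R)$, and evaluation and coevaluation exhibit the unit $R$ as a retract of $(J\otimes R[G])\otimes (J\otimes R[G])^\vee$. The tensor ideal property then places $R$ in $\cat I$, and tensoring with $R[G]$ gives $R[G]\in\cat I$. Closure under coproducts and summands now forces all projective $R[G]$-modules into $\cat I$.

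With this closure property in hand, the two compositions are straightforward. For $\Psi\circ\Phi = \id$, the essential surjectivity of $\pi$ ensures that every object of $\cat L$ lifts to an object of $\pi^{-1}(\cat L)$. For $\Phi\circ\Psi = \id$, if $\pi(M)\in\Psi(\cat I) = \pi(\cat I)$, choose $M'\in\cat I$ with $\pi(M)\simeq \pi(M')$; the standard description of isomorphisms in the stable category of a Frobenius exact category then provides an isomorphism $M\oplus P\cong M'\oplus P'$ with $P,P'$ projective $R[G]$-modules. By the closure property, both projectives and $M'$ lie in $\cat I$, so $M\oplus P\in \cat I$ and finally $M\in \cat I$ by summand closure.

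The main obstacle is the closure property of the second paragraph. It is the one step that requires concrete input beyond formal manipulations, relying both on the structure theorem for projective modules over the Dedekind base and on dualizability of finitely generated projectives to manufacture $R$ as a summand of an object of the ideal.
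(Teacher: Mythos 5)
Your overall strategy coincides with the paper's: everything reduces to showing that a non-zero localizing tensor ideal $\cat I$ of $\projmod(G,R)$ must contain every projective (equivalently, weakly projective) module, after which the bijection is formal from the Frobenius-quotient description of \cref{prop:algebraicmodel}. The problem is that the ``technical heart'' of your argument contains a genuine error. You claim that evaluation and coevaluation exhibit the unit $R$ as a retract of $(J\otimes R[G])\otimes(J\otimes R[G])^{\vee}$, and you conclude $R\in\cat I$. This is false: for a dualizable $M$ the composite $R\to M\otimes M^{\vee}\to R$ is multiplication by the rank of $M$, here $|G|\cdot\rank(J)$, which is not a unit in $\cat O$ at the primes of interest. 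More decisively, $(J\otimes R[G])\otimes(J\otimes R[G])^{\vee}$ is a weakly projective $R[G]$-module, and the trivial module $R$ is a retract of a weakly projective module only when $|G|$ is invertible in $R$. The intermediate conclusion $R\in\cat I$ cannot possibly be right: since $\cat I$ is a tensor ideal, $R\in\cat I$ would force $\cat I=\projmod(G,R)$, so every non-zero localizing tensor ideal would be the whole category --- contradicting the classification of \cref{thm:stratex}, which produces many proper ones whenever $\Spec^h(H^*(G;\cat O))\setminus\Spec(\cat O)$ is non-empty.

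The step is repairable, and the repair is what the paper does (tersely): aim directly for $R[G]\in\cat I$ rather than $R\in\cat I$. From $J\otimes R[G]\in\cat I$ with $J$ a non-zero finitely generated projective $\cat O$-module, tensor with the object $J^{\vee}$ (trivial $G$-action) to get $(J\otimes J^{\vee})\otimes R[G]\in\cat I$. Over a Dedekind domain $J\cong\cat O^{n-1}\oplus\frak a$ with $\frak a$ invertible, so $\frak a\otimes\frak a^{-1}\cong\cat O$ shows that $\cat O$ is a direct summand of $J\otimes J^{\vee}$; hence $R[G]$ is a direct summand of $(J\otimes J^{\vee})\otimes R[G]$ and lies in $\cat I$. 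Since every weakly projective module is a summand of $N\otimes R[G]$ for some $N$, all projectives lie in $\cat I$, and your third paragraph then goes through unchanged. (The paper's own one-line justification ``it follows that $R[G]\in\cat S$'' quietly relies on the same structure theory of projective modules over $\cat O$ that you invoke, so your instinct about where the real content sits was correct --- only the mechanism you chose fails.)
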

\begin{proof}
This is an adaptation of the proof of \cite[Prop.~2.1]{BensonIyengarKrause11a}, see also \cite[Thm.~1]{krausestevenson_stablederived} for the thick tensor ideal version of this result. Indeed, suppose $\cat S$ is a non-zero localizing tensor ideal of $\projmod(G,R)$. For any non-trivial $X$ in $\cat S$, the tensor product $R[G] \otimes X \in \cat S$ is projective and non-zero, see \cref{lem:exactstructure}. It follows that $R[G] \in \cat S$, so all projective objects are contained in $\cat S$. The desired bijection is then a consequence of \cref{prop:algebraicmodel}.
\end{proof}

\begin{Thm}\label{thm:stratex}
Cohomological support induces a bijection
\[
\begin{Bmatrix}
\text{Non-zero localizing tensor} \\
\text{ideals of } \cat \projmod(G,\cat O)
\end{Bmatrix} 
\simeq
\begin{Bmatrix}
\text{Subsets of} \\
\Spec^h(H^*(G;\cat O))\setminus\Spec(\cat O)
\end{Bmatrix}.
\]
\end{Thm}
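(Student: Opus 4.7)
The plan is to assemble the theorem from three previously established ingredients: the algebraic model of the stable module category, the stratification of $\StMod(G,\cat O)$, and Lau's identification of the Balmer spectrum with the cohomological spectrum.

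First I would use \cref{prop:algebraicmodel} together with \cref{lem:alglocalizingideals}: the symmetric monoidal equivalence $\projmod(G,\cat O)/\!\sim\; \simeq \StMod(G,\cat O)$ restricts to a bijection between non-zero localizing tensor ideals of $\projmod(G,\cat O)$ and localizing tensor ideals of $\StMod(G,\cat O)$. The ``non-zero'' hypothesis on the left corresponds to removing the unique localizing tensor ideal $\Loco{R[G]}$ of projectives (equivalently, the zero ideal on the right), exactly as recorded in \cref{lem:alglocalizingideals}.

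Next I would apply \cref{cor:stratstmod}, which bijects localizing tensor ideals of $\StMod(G,\cat O)$ with subsets of $\Spec^h(H^*(G;\cat O))\setminus\Spec(\cat O)$ via the Balmer--Favi support $\Supp$, composed with Balmer's comparison map from \cref{cor:spcstmod}. Composing with the bijection from the previous paragraph yields the desired bijection on the level of sets.

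It then remains to identify the composite assignment with cohomological support. For a non-zero object $M \in \projmod(G,\cat O)$ that is finitely generated (hence compact in $\StMod(G,\cat O)$), this is precisely the content of Lau's theorem (\cref{thm:lau}) combined with \cref{cor:spcstmod}: the Balmer support $\supp(M)$ agrees with $\csupp(M)$ inside $\Spec^h(H^*(G;\cat O))\setminus\Spec(\cat O)$. For a general object $N \in \projmod(G,\cat O)$, one writes $N$ as a filtered colimit of its finitely generated subobjects $N_\alpha$ in $\projmod(G,\cat O)$, and uses that both $\Supp$ (by definition, as a union over generators of the localizing ideal $\Loco{N}$) and $\csupp$ (by the module-theoretic description via $\End^*(N)_{\frak p}$) are determined by the union $\bigcup_\alpha \csupp(N_\alpha)$; this step is routine once one has the compact case. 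Finally, for a localizing tensor ideal $\cat S$ one sets $\Supp(\cat S) = \bigcup_{N\in\cat S}\Supp(N)$, which translates to $\bigcup_{N\in\cat S}\csupp(N)$, matching the statement of the theorem.

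I expect the only mild subtlety to be the passage from compact to general objects in identifying $\Supp$ with $\csupp$, since the Balmer--Favi support a priori is defined using tensor idempotents rather than cohomology. However, once one invokes the local-to-global principle available here (\cref{cor:spcstmod} guarantees a Noetherian spectrum) and the fact that both notions of support send a filtered colimit to the union of supports of the pieces, the identification is formal.
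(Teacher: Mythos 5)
Your proposal is correct and follows essentially the same route as the paper, which simply combines \cref{lem:alglocalizingideals} (the bijection between non-zero localizing tensor ideals of $\projmod(G,\cat O)$ and localizing tensor ideals of $\StMod(G,\cat O)$) with \cref{cor:stratstmod}. Your extra third paragraph on matching the Balmer--Favi support with cohomological support is a reasonable elaboration but is already absorbed into \cref{cor:stratstmod} via Lau's theorem, so the paper does not spell it out.
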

\begin{proof}
This is a consequence of \cref{cor:stratstmod} and \cref{lem:alglocalizingideals}.
\end{proof}

\begin{Rem}\label{rem:tstex}
Let $G$ be a finite group and let $R$ be a regular Noetherian commutative ring. It follows from \cref{cor:spcstmod} and \cite[Thm.~1]{krausestevenson_stablederived} that
\[
\begin{Bmatrix}
\text{Non-zero thick} \\
\text{ideals of } \cat \projmod(G,R)^{\fp}
\end{Bmatrix} 
\simeq
\begin{Bmatrix}
\text{Specialization closed subsets of} \\
\Spec^h(H^*(G;R))\setminus\Spec(R)
\end{Bmatrix}.
\]
In light of this, it would be interesting to generalize \cref{thm:stratex} from coefficients in $\cat O$ to more general rings. 
\end{Rem}

\begin{Rem}\label{rem:costratification}
Utilizing the theory developed in ongoing work with Castellana, Heard, and Sanders \cite{bchs1}, it is plausible that the ideas of the present paper can also be used to establish costratification for $\StMod(G,\cat O)$ and $\Rep(G, \cat O)$.
\end{Rem}

\bibliographystyle{alpha}\bibliography{TG-articles}

\end{document}